\def\redcolor#1{{\color{red} #1}}
\theoremstyle{plain}
\newtheorem{thm}{Theorem}[section]
\newtheorem{lemma}[thm]{Lemma}
\newtheorem{lem}[thm]{Lemma}
\newtheorem{prop}[thm]{Proposition}
\newtheorem{cor}[thm]{Corollary}
\newtheorem{conj}[thm]{Conjecture}
\theoremstyle{definition}
\newtheorem{question}[thm]{Question}
\newtheorem{que}[thm]{Question}
\newtheorem{defn}[thm]{Definition}
\theoremstyle{remark}
\newtheorem{remark}[thm]{Remark}
\newtheorem{rmk}[thm]{Remark}
\newtheorem{rem}[thm]{Remark}
\numberwithin{equation}{section}
\newcommand{\abs}[1]{\lvert #1 \rvert}
\def\sep{\mathrm{sep}}
\def\Art{{\rm Art}}
\def\makeop#1{\expandafter\def\csname#1\endcsname
  {\mathop{\rm #1}\nolimits}\ignorespaces}
\def\makebb#1{\expandafter\def
  \csname bb#1\endcsname{{\mathbb{#1}}}\ignorespaces}
\def\makebf#1{\expandafter\def\csname bf#1\endcsname{{\bf
      #1}}\ignorespaces} 
\def\makegr#1{\expandafter\def
  \csname gr#1\endcsname{{\mathfrak{#1}}}\ignorespaces}
\def\makescr#1{\expandafter\def
  \csname scr#1\endcsname{{\EuScript{#1}}}\ignorespaces}
\def\makecal#1{\expandafter\def\csname cal#1\endcsname{{\mathcal
      #1}}\ignorespaces} 
\def\doLetters#1{#1A #1B #1C #1D #1E #1F #1G #1H #1I #1J #1K #1L #1M
                 #1N #1O #1P #1Q #1R #1S #1T #1U #1V #1W #1X #1Y #1Z}
\def\doletters#1{#1a #1b #1c #1d #1e #1f #1g #1h #1i #1j #1k #1l #1m
                 #1n #1o #1p #1q #1r #1s #1t #1u #1v #1w #1x #1y #1z}
     \def\qed{\qedmark\medbreak}%
\def\qedmark{{\enspace\vrule height 6pt width 5pt depth 1.5pt}}%
\def\Gm{{{\bbG}_{\rm m}}}
\def\Spec{{\rm Spec}\,}
\def\Fp{{\bbF}_p}
\def\Fq{{\bbF}_q}
\def\Ql{{\bbQ}_{\ell}}
\def\Qp{{\bbQ}_p}
\def\Zp{{\bbZ}_p}
\def\Qbar{\overline{\bbQ}}
\def\wh{\widehat}
\def\wt{\widetilde}
\def\Ind{{\rm Ind}}
\def\Gm{\mathbb{G}_{\rm m}} 
\def\G{\mathbb{G}}
\def\R{\mathbb{R}}
\def\Q{\mathbb{Q}}
\def\Z{\mathbb{Z}}
\def\A{\mathbb{A}}
\def\C{\mathbb{C}}
\def\char{\text{char }}
\def\embed{\hookrightarrow}
\def\ol{\overline}
\def\pr{{\rm pr}}
\newcommand{\<}{\langle}   
\renewcommand{\>}{\rangle} 
\newcommand{\isoto}{\stackrel{\sim}{\to}}
\newcommand{\xdashrightarrow}[2][]
  {\ext@arrow 0359\rightarrowfill@@{#1}{#2}}
\newcommand{\xdashleftarrow}[2][]
  {\ext@arrow 3095\leftarrowfill@@{#1}{#2}}
\newcommand{\xdashleftrightarrow}[2][]{\ext@arrow 3359\leftrightarrowfill@@{#1}{#2}}
\def\rightarrowfill@@{\arrowfill@@\relax\relbar\rightarrow}
\def\leftarrowfill@@{\arrowfill@@\leftarrow\relbar\relax}
\def\leftrightarrowfill@@{\arrowfill@@\leftarrow\relbar\rightarrow}
\def\arrowfill@@#1#2#3#4{%
  $\m@th\thickmuskip0mu\medmuskip\thickmuskip\thinmuskip\thickmuskip
   \relax#4#1
   \xleaders\hbox{$#4#2$}\hfill
   #3$%
}
\DeclareSymbolFont{cyrletters}{OT2}{wncyr}{m}{n}
\DeclareMathSymbol{\Sha}{\mathalpha}{cyrletters}{"58}
\def\Gmk{\G_{{\rm m}, k}}
\def\GmK{\G_{{\rm m}, K}}
\begin{document}

\title[Tamagawa numbers of CM tori]{On Tamagawa numbers of CM tori}

\author{Pei-Xin Liang}
\address{(Liang) Department of Mathematics, National Tsing Hua University
Taipei, Taiwan, }
\email{cindy11420@gmail.com}

\author{Yasuhiro Oki}
\address{(Oki) Department of Mathematics, Faculty of Science, Hokkaido University, 060-0810, Sapporo, Hokkaido, Japan}
\email{oki@math.sci.hokudai.ac.jp}

\author{Hsin-Yi Yang}
\address{(Yang) 
Fakult\"at f\"ur Mathematik,
Universit\"at Duisburg-Essen,
45117 Essen, Germany
}
\email{hsin-yi.yang@stud.uni-due.de}


\author{Chia-Fu Yu}
\address{(Yu) Institute of Mathematics, Academia Sinica and NCTS, Taipei, Taiwan, 10617}
\email{chiafu@math.sinica.edu.tw}

\date{\today}
\subjclass[2010]{14K22, 11R29.} 
\keywords{CM algebraic tori, Tamagawa numbers.}  

\maketitle

\begin{abstract}
In this article we investigate the problem of computing Tamagawa numbers of CM tori. This problem arises naturally from the problem of counting polarized abelian varieties with commutative endomorphism algebras over finite fields, and polarized CM abelian varieties and components of unitary Shimura varieties in the works of Achter--Altug--Garcia--Gordon and of Guo--Sheu--Yu, respectively. We make a systematic study on Galois cohomology groups in a more general setting and compute the Tamagawa numbers of CM tori associated to various Galois CM fields. Furthermore, we show that every (positive or negative) power of $2$ is the Tamagawa number of a CM tori, proving the analogous conjecture of Ono for CM tori.     
\end{abstract}

\section{Introduction}
\label{sec:I}

In his two fundamental papers \cite{ono:arithmetic, ono:tamagawa} Takashi Ono investigated the arithmetic of algebraic tori. He introduced and explored the class number and Tamagawa number of $T$, which will be denoted by $h(T)$ and $\tau(T)$ respectively (also see Section 2 for the definitions). One arithmetic significant of these invariants is that
the class number $h(\G_{{\rm m},k})$ is equal to the class number $h_k$ of the number field $k$, and the analytic class number formula for $k$ can be reformulated by the simple statement 
$\tau(\G_{{\rm m},k})=1$. Thus,  the class numbers of algebraic tori can be viewed as generalizations of class numbers of number fields, while Tamagawa numbers play a key role in the extension of analytic class number formulas.   

Ono showed \cite{ono:tamagawa} that $\tau(T)=|H^1(k, X(T))|/|\Sha^1(k,T)|$, where $X(T)$ is the group of characters of $T$ and $\Sha^1(k,T)$ is the Tate-Shafarevich group of $T$. Kottwitz \cite{kottwitz:duke1984} generalized Ono's formula to reductive groups and proved \cite{Kottwitz-Tamagawa-numbers} the celebrated conjecture of Weil for the Tamagawa number of semi-simple simply connected groups. Ono constructed a $15$-dimensional algebraic torus with Tamagawa number $1/4$, showing that $\tau(T)$ can be non-integral and conjectured in \cite{ono:sugaku} that every positive rational number is equal to $\tau(T)$ for some torus $T$. Ono's conjecture was proved by S.~Katayama \cite{katayama:1985} for the number field case. 
For some later studies of class numbers and Tamagawa numbers of algebraic tori we refer to J.-M.~Shyr \cite[Theorem 1] {Shyr-class-number-relation}, 
S. Katayama \cite{katayama:kyoto1991}, M.~Morishita \cite{morishita:nagoya1991}, C.~Gonz\'{a}lez-Avil\'{e}s \cite{gonzalez:mrl2008,gonzalez:crelle2010} and M.-H.~Tran \cite{tran:jnt2017} and references within.

In this article we are mainly concerned with the problem of computing the Tamagawa numbers of complex multiplication (CM) algebraic tori. CM tori are closely related to the arithmetic of CM abelian varieties and computing their Tamagawa numbers itself is a way of exploring the structure of CM fields. This problem directly contributes to recent works of Achter, Altug, Garcia and Gordon \cite{achter-altug-gordon} and of J.~Guo, N.~Sheu and the fourth named author \cite{guo-sheu-yu:CM}. 
In the former one the authors computed the size of an isogeny class of principally polarized abelian varieties over a finite field with commutative endomorphism algebra, and express the number in terms of a discriminant, the Tamagawa number, and the product of Frobenius local densities. In the latter one the authors computed formulas for certain CM abelian varieties and certain polarized abelian varieties over finite fields with commutative endomorphism algebras upon the results of \cite{xue-yu:counting}. Using the class number formula for CM tori, they also computed the numbers of connected components of complex unitary Shimura varieties.
In the appendix of \cite{achter-altug-gordon}, W.-W. Li and T.~R\"ud have obtained several initial results of the values of $\tau(T)$. Our goals are to prove more cases of CM tori and to determine the range of the values of Tamagawa numbers of all CM tori.
With a similar goal but using different methods, T.~R\"ud obtains several results along this direction \cite{rued:thesis}. He provides an algorithm, among others, for giving precise lower bounds and determining possible Tamagawa numbers, and obtains the values $\tau(T)$ for several other CM tori of lower dimension. \\

We shall describe our results towards computing Tamagawa numbers for a more general class of algebraic tori which include CM tori and then give more detailed results of CM tori. Let $k$ be a global field and $K:=\prod_{i=1}^rK_{i}$ be the product of finite separable field extensions $K_i$ of $k$. Let $E:=\prod_{i=1}^r E_i$, where each $E_i\subset K_i$ is a subextension of $K_i$. Denote by $T^K=\prod_i T^{K_i}$ and $T^E=\prod_i T^{E_i}$ the algebraic $k$-tori associated to the multiplicative groups of $K$ and $E$, respectively, and let $N_{K/E}=\prod_i N_{K_i/E_i}: T^K\to T^E$ be the norm map. We write $T^{K/E,1}$ for the kernel of $N_{K/E}$ and $T^{K/E,k}:=N_{K/E}^{-1}(\Gmk)$ for the preimage of the subtorus $\Gmk \embed T^E$ via the diagonal embedding. Let $L$ be the smallest splitting field of $T^{K/E,k}$ and let $G=\Gal(L/k)$. We let $\Lambda:=X(T^{K/E,k})$ and $\Lambda^1:=X(T^{K/E,1})$ be the character groups of $T^{K/E,k}$ and $T^{K/E,1}$, respectively. They fit in the following short exact sequence of $G$-lattices:
\begin{equation}
  \label{eq:I.1}
  \begin{CD}
    0 @>>> \Z @>>> \Lambda @>>> \Lambda^1 @>>> 0. 
  \end{CD}
\end{equation}
Write $H_i:=\Gal(L/K_i)$,  $\wt N_i:=\Gal(L/E_i)$, and $N_i^{\rm ab}:=\wt N_i/D(\wt N_i)H_i$, where $D(\wt N_i)$ denotes the commutator group of $\wt N_i$.
When there is no confusion, 
for brevity we shall write $H^q(A)$ for $H^q(G,A)$ for any $G$-module $A$. 

Let $\Ver_{G,N_i}:G\to N_i^{\rm ab}$ denote the transfer from $G$ into $N_i^{\rm ab}$; see Definition~\ref{def:rt}.
For any abelian group $H$, the Pontryagin dual of $H$ is denoted by $H^\vee$. By Ono's formula (cf.~\eqref{eq:ono-1}) and
the Poitou-Tate duality, 
we have
$\tau(T)=|H^1(G,X(T))|/|\Sha^2(G,X(T))|$, where $\Sha^{i}(G,X(T))$ is the $i$th Tate-Shafarevich group of $X( T)$. 

Let $\scrD$ be the set of all decomposition groups of $G$.    
Denote by
\[ r_{\scrD,A}^i: H^i(G,A) \to \bigoplus_{D\in \scrD} H^i(D,A)\] 
the restriction map for each $G$-module $A$. Put
\begin{equation}
  \label{eq:I.1.5}
  H^2(\Z)':=\{x\in H^2(G, \Z) : r^2_{\scrD,\Z} (x) \in {\rm Im} (\delta_\scrD) \},  
\end{equation}
where $\delta_\scrD: \bigoplus_{D\in \scrD} H^1(D, \Lambda^1) \to    \bigoplus_{D\in \scrD} H^2(D, \Z)$ is the connecting homomorphism induced from \eqref{eq:I.1}. 
The group $H^2(\Z)'$ plays a similar role of a Selmer group.  

\begin{thm}\label{I.1}
  Let the notation be as above.

{\rm (1)} There is a canonical isomorphism $H^1(G,\Lambda^1)\simeq \bigoplus_{i} N_{i}^{\rm ab,\vee}$.  

{\rm (2)} There is a canonical isomorphism
\begin{equation}
    \label{eq:I.2}
    H^1(G,\Lambda)\simeq \Ker\left (\sum \Ver_{G,N_i}^{\vee}: \bigoplus_i N_i^{\rm ab,\vee} \to G^{\rm ab,\vee} \right ),
\end{equation}
where $\Ver_{G,N_i}:G\to N_i^{\rm ab}$ is the transfer map.

{\rm (3)}  Assume that $K_i/E_i$ is cyclic with Galois group $N_i$ for all $i$. Then $\Sha^2(\Lambda)\simeq H^2(\Z)'/{\rm Im}(\delta)$ and
  \begin{equation}
    \label{eq:I.5}
\tau(T^{K/E,k})=\frac{\prod_{i=1}^r |N_i|}{|H^2(\Z)'|}.    
  \end{equation}
  
{\rm (4)} If we further assume that the subgroups $\wt N_i$ and $H_i$ are all normal in $G$, and   
  let
  \[ \Ver_{G, \bfN}=(\Ver_{G, N_i})_i : G^{\rm ab} \to \prod_i N_i \text{ and }
     \Ver_{D, \ol \bfD}=(\Ver_{D, \ol D_i})_i: D^{\rm ab}\to \prod_{i} \ol D_i \]  
   denote the corresponding transfer maps, where $\ol D_i:=D_i/(D_i \cap H_i)\subset N_i$, respectively, then
\begin{equation}
  \label{eq:I.6}
  H^2(\Z)'=\{f \in G^{{\rm ab}, \vee}: f {\big |}_{D^{\rm ab}}
  \in {\rm Im}(\Ver_{D, \ol \bfD}^\vee)\  \forall\, D\in \scrD \}, \quad \text{and}
\end{equation}
\begin{equation}
  \label{eq:I.7} 
  \Sha^2(\Lambda) \simeq \frac{\{f \in G^{{\rm ab}, \vee}: f {\big |}_{D^{\rm ab}} \in {\rm Im}(\Ver_{D, \ol \bfD}^\vee) \ \forall\, D\in \scrD \}}{{\rm Im} (\Ver^{\vee}_{G,\bfN})}.
\end{equation}  
\end{thm}

Using Theorem~\ref{I.1}(2), we give a different proof of a result of Li and R\"ud \cite[Proposition A.11]{achter-altug-gordon} which does not rely on Kottwitz's formula; see Corollary~\ref{cor:A.11}. 
By \eqref{eq:I.5}, the ratio $(\prod_{i=1}^r |N_i|) / \tau(T^{K/E,k})$ is a positive integer; this gives a simple upper bound of $\tau(T^{K/E,k})$.


\begin{thm}\label{I.2} Suppose the following conditions hold: 
(a) for each $1\le i \le r$, the extension $K_i/k$ is Galois with Galois group $G_i$, $N_i=\Gal(K_i/E_i)$ is cyclic and every decomposition group of $G_i$ is cyclic; and (b) $G\simeq G_1\times \dots \times G_r$. Then we have $\tau(T^{K/E,k})=\prod_{i=1}^r \tau(T^{K_i/E_i,k})$.
\end{thm}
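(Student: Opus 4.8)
The plan is to reduce everything to Theorem~\ref{I.1} and then exploit the product structure. First I would apply Theorem~\ref{I.1}(3) to the torus $T^{K/E,k}$: since each $N_i$ is cyclic by hypothesis (a), the cyclicity assumption of part (3) is met, so $\tau(T^{K/E,k}) = \prod_i |N_i| / |H^2(\Z)'|$, where $H^2(\Z)'\subseteq H^2(G,\Z)$ is the Selmer-type subgroup of~\eqref{eq:I.1.5}. Likewise, for each fixed $i$, apply Theorem~\ref{I.1}(3) to the one-factor torus $T^{K_i/E_i,k}$ (here $K=K_i$, $E=E_i$, $r=1$); its splitting field is the smallest splitting field $L_i$ of $T^{K_i/E_i,k}$, which, because $K_i/k$ is Galois, is contained in $K_i$ and equals the fixed field of $H_i\cap D(\wt N_i)$ — in any case a subfield of $K_i$ with Galois group a quotient of $G_i$. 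Write $G_i'=\Gal(L_i/k)$; then $\tau(T^{K_i/E_i,k}) = |N_i|/|H^2(G_i',\Z)'_i|$, where $H^2(G_i',\Z)'_i$ is the corresponding local-condition subgroup of $H^2(G_i',\Z)$. Thus the theorem is equivalent to the numerical identity $|H^2(G,\Z)'| = \prod_i |H^2(G_i',\Z)'_i|$.

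The next step is to unwind $H^2(G,\Z)'$ using the description in part (4). Here I must check that hypothesis (a)+(b) gives me the normality assumption of Theorem~\ref{I.1}(4): since $G\simeq G_1\times\cdots\times G_r$ and $K_i/k$ is Galois, $H_i=\Gal(L/K_i)$ is the kernel of the projection $G\to G_i$, hence normal in $G$, and $\wt N_i=\Gal(L/E_i)\supseteq H_i$ corresponds to $N_i\trianglelefteq G_i$ under $G/H_i\cong G_i$ (cyclic groups sitting in their own Galois group, with $E_i/k$ not necessarily Galois — but $\wt N_i$ need not be normal in $G$). At this point I would note that Theorem~\ref{I.1}(4) literally requires $\wt N_i\trianglelefteq G$; if that fails I fall back to~\eqref{eq:I.1.5} directly together with the transfer description of the restriction maps from part (2) applied to each $D$. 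Either way, $H^2(G,\Z)'$ is cut out inside $G^{\mathrm{ab},\vee}$ by local conditions "$f|_{D^{\mathrm{ab}}}\in\mathrm{Im}(\Ver_{D,\bar\bfD}^\vee)$" ranging over decomposition groups $D\in\scrD$. Using $G=\prod_i G_i$, every subgroup relevant here — decomposition groups included, by hypothesis (a) which forces each decomposition group of $G_i$ to be cyclic — and every transfer map splits as a product over $i$: $G^{\mathrm{ab},\vee}=\bigoplus_i G_i^{\mathrm{ab},\vee}$, and $\Ver_{G,\bfN}=\prod_i\Ver_{G_i,N_i}$, and the decomposition groups of $G$ are products $\prod_i D_i$ of decomposition groups $D_i$ of $G_i$ at a common place. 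The compatibility of transfer with direct products — $\Ver_{\prod G_i,\prod N_i}=\prod\Ver_{G_i,N_i}$, and the analogous statement for the restriction of transfer to a product subgroup $\prod D_i$ — is the algebraic engine; it follows from the explicit Schreier/coset formula for the transfer (Definition~\ref{def:rt}) applied to a direct product, using that one can choose coset representatives componentwise.

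With that splitting in hand, the local condition defining $H^2(G,\Z)'$ decouples: an element $f=(f_i)_i$ satisfies it for all $D=\prod_i D_i\in\scrD$ if and only if each $f_i$ satisfies the corresponding condition for all decomposition groups $D_i$ of $G_i$ — here I use that the decomposition groups of the product field compositum are exactly the products of the decomposition groups of the factors at the same place, and that $\mathrm{Im}(\Ver_{\prod D_i,\prod\bar D_i}^\vee)=\prod_i\mathrm{Im}(\Ver_{D_i,\bar D_i}^\vee)$. Therefore $H^2(G,\Z)'\cong\bigoplus_i H^2(G_i,\Z)'_i$, and comparing with the one-factor computation (passing from $G_i$ to its quotient $G_i'$ does not change cardinalities here, since the inflation $H^2(G_i',\Z)\hookrightarrow H^2(G_i,\Z)$ is injective and the local conditions correspond) gives $|H^2(G,\Z)'|=\prod_i|H^2(G_i',\Z)'_i|$, hence the claimed multiplicativity of $\tau$. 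The main obstacle I anticipate is bookkeeping the decomposition groups correctly: one must verify that every $D\in\scrD$ for $L/k$ is genuinely a product $\prod_i D_i$ with $D_i\in\scrD(G_i)$ arising from a single rational place, and conversely, so that the product of local conditions is exactly the global one — this is where hypothesis (b), $G\simeq\prod G_i$ (an internal direct product realized by linearly disjoint fields), is essential, and where one has to be careful that different places of $k$ do not interact. The transfer-respects-products lemma is routine but should be stated and proved cleanly, as it is used three times (for $G$, for each $D$, and implicitly in identifying the images).
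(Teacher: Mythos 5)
There is a genuine gap, and it is precisely the point you flag at the end as a "main obstacle": the claim that every decomposition group $D\in\scrD$ of $G=\prod_i G_i$ is a product $\prod_i D_i$ of decomposition groups of the $G_i$ is simply false. The paper's Lemma~\ref{lm:dec} only gives you that the projection $\pr_i$ maps $D_{\grP}$ \emph{onto} $D_{\grP_i}$, so $D_\grP\subseteq \prod_i D_{\grP_i}$, but in general this inclusion is strict. For a concrete counterexample, take $K_1=\Q(\sqrt 2)$, $K_2=\Q(\sqrt 3)$, $L=K_1K_2$, and $p=5$: the prime $5$ is inert in both $K_1$ and $K_2$, so $D_5(G_1)=G_1$ and $D_5(G_2)=G_2$, but $5$ splits in $\Q(\sqrt 6)\subset L$, so $D_5(G)=\Gal(L/\Q(\sqrt 6))\cong\Z/2\Z$ is the diagonal, not all of $G_1\times G_2$. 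A decomposition group at an unramified prime is (topologically) generated by a single Frobenius element, which lives in a single coordinate slot of a product only when you engineer it to.

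Because of this, the "decoupling" step in your proof breaks. One direction is fine: since $D\subseteq\prod_i D_{\grP_i}$, a constraint satisfied on the larger group restricts to $D$, and this gives the inclusion $\prod_i H^2(G_i,\Z)'\subseteq H^2(G,\Z)'$ (this is exactly the paper's Lemma~\ref{lm:(b)to(a)}, using only that the transfer $\Ver_{D,\ol D_k}$ factors through $\pr_k$). The delicate direction is the reverse inclusion: given $f\in H^2(G,\Z)'$, you must recover the local condition on $f_i$ at \emph{every} decomposition group $D'_i\in\scrD_i$ from the conditions on $f$ at decomposition groups of $G$ --- but these may all be small diagonal subgroups rather than products, so there is no obvious way to pull apart the coordinates. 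The paper handles this with Lemma~\ref{lm:lift_criteria}: for each $D'_i\in\scrD_i$ you need to exhibit some $D\in\scrD(G)$ together with a section $s_i\colon D'_i\to D$ of $\pr_i$ whose other projections $\pr_k\circ s_i$ are trivial; then $s_i^*$ extracts the $i$-th local condition. This is where the hypothesis that every decomposition group of $G_i$ is cyclic is used decisively: if $D'_i=\langle a_i\rangle$ is cyclic, then by Chebotarev the cyclic subgroup $\langle(1,\dots,1,a_i,1,\dots,1)\rangle$ of $G$ is a decomposition group, and the obvious section works. Without cyclicity you cannot realize such a subgroup as a decomposition group in general. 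Your transfer-respects-products lemma and the identification of $H^2(G,\Z)'$ via Theorem~\ref{I.1} are both correct and on-target, but the crossing from local conditions on $G$ to local conditions on the factors needs the lifting argument, not a product decomposition of $\scrD$.
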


Now let $K=\prod_{i=1}^r K_i$ be a CM algebra over $\Q$ and $E:=K^+$ the $\Q$-subalgebra fixed by the canonical involution $\iota$, and let $T=T^{K/E,\Q}$ be the associated CM torus over $\Q$.

\begin{thm}\label{I.3} 
For any integer $n$, there exists a CM torus $T$ over $\Q$ such that $\tau(T)=2^n$.
\end{thm}

Finally, we give a numerous of results of $\tau(T)$ for Galois CM fields. 

\begin{thm}\label{I.4} 
Suppose that $K$ is a Galois CM field with Galois group $G=\Gal(K/\Q)$. Let $g:=[K^+:\Q]$, $G^+=\Gal(K^+/\Q)$ and $T$ be the associated CM torus.
\begin{enumerate}
    \item[\rm (1)] If $K=\Q(\zeta_n)\neq \Q$ is the $n$th cyclotomic field with either $4|n$ or odd $n$, then  
\begin{enumerate} 
    \item[$(a)$] If $n$ is either a power of an odd prime $p$ or $n=4$, then $\tau(T) = 1$;
    \item[$(b)$] In other cases, we have $\tau(T) = 2$.
\end{enumerate}

\item[{\rm (2)}] If $G$ is abelian, then $\tau(T) \in \{1,2\}$.
Moreover, the following statements hold:
\begin{enumerate} 
    \item[$(a)$] If $g$ is odd, then $\tau(T) = 1$.
    \item[$(b)$] If $g$ is even and the exact sequence 
\[1 \rightarrow \langle\iota\rangle \rightarrow G \rightarrow G^{+} \rightarrow 1 \tag{$*$}\]
splits, then $\tau(T) = 2$.
\end{enumerate} 

\item[\rm (3)] Suppose that the short exact sequence $(*)$ splits. Then $\tau(T)\in \{1,2\}$ and the following statements hold:

\begin{enumerate}
    \item[{\rm (a)}] When $g$ is odd, $\tau(T) = 1$;
    \item[{\rm (b)}] Suppose $g$ is even and let $g^{ab}:=|G^{+{\rm ab}}|$, the cardinality of the abelianization of $G^+$.
\begin{enumerate} 
  \item[{\rm (i)}] If $g^{\rm ab}$ is even, then $\tau(T) = 2$;
  \item[{\rm (ii)}] If $g^{\rm ab}$ is odd, then there is a unique nonzero
          element $\xi$ in the $2$-torsion subgroup $H^{2}(\Lambda)[2]$ 
          of $H^{2}(\Lambda)$. Moreover, $\tau(T) = 1$ if and only if
          its restriction $r_D(\xi) = 0$ in $H^2(D,\Lambda)$ for all $D\in \scrD$. 
\end{enumerate}
\end{enumerate}

\item[{\rm (4)}] Let $P$ and $Q$ be two odd non-square positive integers such that $P-1=a^2$ and $Q-1=Pb^2$  
for some integers $a,b\in  \bbN$. Let $K:=\Q(\sqrt{\alpha})$ with $\alpha:=-(P+\sqrt{P})(Q+\sqrt{Q})$. Then  
$K$ is a  Galois CM field with Galois group $Q_8$ and 
\begin{equation}
    \label{eq:I.8}
    \tau(T)=\begin{cases}
    1/2, & \text{if $\left ( \frac{P}{q} \right) =1$ for all prime  $q|Q$;} \\
    2, & \text{otherwise}. 
    \end{cases}
\end{equation}   

\item[{\rm (5)}] Suppose the Galois extension $K/\Q$ has Galois group $D_n$ of order $2n$. Then $n$ is even and $\tau(T)=2$.
\end{enumerate}
\end{thm}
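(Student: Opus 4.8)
The plan is to funnel all five parts through Theorem~\ref{I.1}(3). Since $K/\Q$ is Galois and CM we have $r=1$, splitting field $L=K$, and $N_1=\Gal(K/K^+)=\langle\iota\rangle\cong\Z/2$; moreover $\iota$ is central in $G$, being the unique nontrivial element of the normal subgroup $\Gal(K/K^+)$ (normal because $K^+/\Q$ is Galois), so $\wt N_1=N_1$ is normal and Theorem~\ref{I.1}(3),(4) give
\[ \tau(T)=\frac{2}{|H^2(\Z)'|},\qquad H^2(\Z)'=\{\,f\in G^{{\rm ab},\vee}: f|_{D^{\rm ab}}\in{\rm Im}(\Ver_{D,\ol D}^\vee)\ \forall\, D\in\scrD\,\},\quad \ol D=D\cap\langle\iota\rangle. \]
The first step is to unwind the local conditions: at $D$ with $\iota\notin D$ the condition is $f|_D=0$; at $D$ with $\iota\in D$ it is $f|_D\in\{0,\chi_0\circ\Ver_{D,\langle\iota\rangle}\}$ with $\chi_0$ the nontrivial character of $\langle\iota\rangle$, which for cyclic $D$ becomes $f|_D\in\{0,\psi_D\}$ with $\psi_D$ the quadratic character of $D$ (the transfer of a cyclic group onto its order-$2$ subgroup is onto). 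By Chebotarev every cyclic subgroup of $G$ occurs as some $D\in\scrD$, so these conditions are abundant, and the recurring mechanism will be that they force $f$ to vanish on a generating set, giving $H^2(\Z)'=0$ and $\tau(T)=2$.

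For parts (2), (3) and (5) I would combine this with $|H^1(G,\Lambda)|=|\Ker(\Ver_{G,\langle\iota\rangle}^\vee)|$ from Theorem~\ref{I.1}(2) and split on the parity of $g$. If $g$ is odd, Schur--Zassenhaus splits $\langle\iota\rangle$ off every $D\ni\iota$, and one checks the surviving $f$ are exactly the pullbacks of $\chi_0$, so $|H^2(\Z)'|=2$ and $\tau(T)=1$, covering (2)(a) and (3)(a). If $g$ is even and $(*)$ splits, then $G\cong\langle\iota\rangle\times G^+$; taking the decomposition groups $\langle(1,\gamma)\rangle$ forces $f|_{1\times G^+}=0$, and taking $\langle(\iota,\delta)\rangle$ for $\delta\in G^+$ of even order (which exists as $2\mid g$) then forces $f(\iota,1)=0$, so $H^2(\Z)'=0$ and $\tau(T)=2$, covering (2)(b) and (3)(b)(i); for (3)(b)(ii), where $g^{\rm ab}$ is odd, one instead identifies $\Sha^2(\Lambda)$ with the cyclic group $H^2(G,\Lambda)[2]$ and reduces $\tau(T)=1$ to the vanishing of all restrictions of its generator $\xi$, exactly as stated. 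For (5) one first observes that $\iota\in Z(D_n)\setminus\{1\}$ forces $n$ even (the centre of $D_n$ is trivial for odd $n$), then kills $f$ using the cyclic decomposition groups generated by two reflections $s$ and $rs$, getting $\tau(T)=2$.

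For the cyclotomic case (1) I would compute directly with $G=(\Z/n)^\times$ and $\iota=-1$, using that a prime $p\mid n$ is ramified in $\Q(\zeta_n)$ with decomposition group containing the cyclic inertia group $(\Z/p^{v_p(n)})^\times$, and is totally ramified — so $D_p=G$ — exactly when $n$ is a prime power. When $n=p^a$ ($p$ odd) or $n=4$ the group $G$ is cyclic and the lone condition $f\in\{0,\psi_G\}$ at the totally ramified prime forces $|H^2(\Z)'|=2$, hence $\tau(T)=1$; in all other admissible cases $(\Z/n)^\times$ is non-cyclic, and showing that the involutions different from $-1$ together with the remaining $\sigma$ with $-1\notin\langle\sigma\rangle$ generate $(\Z/n)^\times$ gives $H^2(\Z)'=0$ and $\tau(T)=2$.

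Part (4) is the arithmetic heart, and I expect the main obstacle to lie there. One first verifies, using $P=1+a^2$ and $Q=1+Pb^2$, that $\alpha=-(P+\sqrt P)(Q+\sqrt Q)$ makes $K=\Q(\sqrt\alpha)$ a Galois CM field with $\Gal(K/\Q)\cong Q_8$ and $K^+=\Q(\sqrt P,\sqrt Q)$: the $Q_8$-structure is Witt's criterion for embedding a biquadratic field into a quaternion extension (the hypothesis that $P$ is a sum of two squares makes the relevant form isotropic), and CM-ness follows from $P\pm\sqrt P>0$, $Q\pm\sqrt Q>0$. In $Q_8$ the subgroup $\langle\iota\rangle$ lies in every nontrivial proper subgroup, all proper subgroups are cyclic, and $\Ver_{Q_8,\langle\iota\rangle}=0$; hence the conditions at $\langle i\rangle,\langle j\rangle,\langle k\rangle$ are automatic, and the only possibly non-vacuous condition is at a prime with full decomposition group $Q_8$, where it forces $f=0$. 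Thus $\tau(T)=1/2$ if no rational prime has decomposition group $Q_8$ in $K$, and $\tau(T)=2$ otherwise. It then remains to carry out the ramification analysis: $Q\equiv1\pmod p$ for $p\mid P$ rules out primes dividing $P$, while for $q\mid Q$ one shows $q$ is totally ramified in $K/K^+$ with $|D_q|=8$ precisely when $q$ is inert in $\Q(\sqrt P)$, i.e.\ when $\left(\frac{P}{q}\right)=-1$; collecting these gives the stated dichotomy. This decomposition-group bookkeeping is the step I expect to be the most delicate.
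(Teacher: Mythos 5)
Your proposal is correct, and it takes a genuinely different route from the paper for most parts.

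Your blueprint is to run everything through the obstruction set
\[
H^2(\Z)'=\bigl\{\,f\in G^{{\rm ab},\vee}:\ f|_{D}\in\operatorname{Im}(\Ver_{D,\,D\cap\langle\iota\rangle}^\vee)\ \forall\,D\in\scrD\,\bigr\},
\]
using $\tau(T)=2/|H^2(\Z)'|$ and the dichotomy ``$f|_D=0$ if $\iota\notin D$; $f|_D$ of order $\le 2$ if $\iota\in D$ and $D$ is cyclic,'' together with the fact that every cyclic subgroup occurs as a decomposition group by Chebotarev. The paper instead works on the class-field-theory side of the equality $n_K=|H^2(\Z)'|$ (Proposition~\ref{prop:nKH2Z}) and deploys a mix of shortcuts: for (1)(b) it uses that $K/K^+$ is unramified at all finite places when $n$ is not a prime power, so $n_K\mid\prod_{p\in S_{K/K^+}}e_{T,p}=1$; for (2)(b) and (3)(b)(i) it observes that the splitting of $(*)$ plus $2\mid g$ produces two distinct imaginary quadratic subfields, so $n_K=1$ by Lemma~\ref{lm:E}; for (5) it uses Xue's density count of $\{\sigma:\ \iota\notin\langle\sigma\rangle\}$ (Lemma~\ref{lm:density}) rather than a generating-set computation in $H^2(\Z)'$. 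For (3)(b)(ii) and (4) your sketch essentially coincides with the paper's cohomological argument. Your uniform transfer computation is a legitimate alternative: it is self-contained and avoids the ad hoc selection of imaginary quadratic subfields or the density theorem, at the cost of somewhat more group-theoretic bookkeeping (e.g.\ verifying in (1)(b) that the elements $\sigma$ with $\iota\notin\langle\sigma\rangle$ already surject onto $G/2G$, which does hold whenever $n$ is not a prime power). One minor imprecision in your write-up of (3)(b)(ii): $\Sha^2(\Lambda)$ is only identified with $H^2(\Lambda)[2]\cong\Z/2\Z$ in the case $\tau(T)=1$; the correct statement is that $\Sha^2(\Lambda)\subseteq H^2(\Lambda)[2]\cong\Z/2\Z$, which is what yields the stated equivalence.
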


We explain the idea of the proof of Theorem~\ref{I.3}. First of all, it is rather difficult to construct a CM field such that the Tamagawa number $\tau(T)$ of the associated algebraic torus $T$ is small. Suppose that $K/\Q$ is Galois with Galois group $G$ and let $G_2$ be a Sylow $2$-group of $G$. T.~R\"ud implements a SAGE algorithm and has checked all $2$-groups of order $\le 128$ and the first 29631 groups of order 256 for $G_2$. Based on R\"ud's result there is at most one case such that $\tau(T)=1/4$; see \cite[Section 6.4]{rued:thesis}. 
To get around this, we construct an infinite family of "totally" linear disjoint $Q_8$-CM fields $\{K_i\}$ for $i\ge 1$ with $\tau(T_i)=1/2$, that is, the Galois group of the compositum of any finitely many of these $Q_8$-CM fields $K_i$ is the product of the Galois groups $\Gal(K_i/\Q)$.    
The CM algebra $K:=\prod_{i=1}^r K_i$ then satisfies the conditions in Theorem~\ref{I.2} and it follows that the CM torus $T$ associated to $K$ satisfies $\tau(T)=1/2^r$. 

We point out that the proof of Theorem~\ref{I.3} is actually quite
tricky. First of all, it follows from \cite{rued:thesis} or Theorem~\ref{I.4} that $Q_8$-CM fields are the simplest ones so that the associated CM tori $T$ can have $\tau(T)=1/2$. On the other hand, Theorem~\ref{I.2} requires a condition that every decomposition group of each CM field extension $K_i/\Q$ is cyclic. Fortunately, for any $Q_8$-CM field $K_i$ with CM torus $T_i$, one has $\tau(T_i)=1/2$ if and only if this condition for $K_i/\Q$ holds (see Proposition~\ref{prop:tauQ8}), so that we can apply Theorem~\ref{I.2} to the product of them. On other other hand, one may be wondering whether this condition is superfluous. For this question, we construct linearly disjoint Galois CM fields $K_1$ and $K_2$ such that
\begin{equation}\label{eq:count_eg}
    \tau(T_1)=2, \quad \tau(T_2)=1, \quad \tau(T)=1,
\end{equation}
where $T_1$, $T_2$ and $T$ are CM tori associated to $K_1$, $K_2$ and
$K_1\times K_2$, respectively. 
This example shows that the cyclicity of decomposition groups of every $K_i/\Q$ is not superfluous. 


Theorem~\ref{I.3} proves an analogous but more involved conjecture of Ono for CM tori.  
In the appendix Jianing Li and the fourth named author show that for any global field $k$ and any positive rational number $\alpha$, there exists an algebraic torus $T$ over $k$ with $\tau(T)=\alpha$; see Theorem~\ref{thm:ono}. This extends the result of Katayama ~\cite{katayama:1985} and proves Ono's conjecture for global fields.

Though our original motivation of investigating Tamagawa numbers of CM tori comes from counting certain abelian varieties and exploring the structure of CM fields, the main part of the problem itself was to compute the Tate-Shafarevich group. 
We explain in Remark~\ref{rem:ker1} how the Tate-Shafarevich group of a CM torus also comes into play in the theory of Shimura varieties of PEL-type. 
Tate-Shafarevich groups measure the failure of the local-global principle that is one of main interests in number theory and has been actively studied. H\"urlimann~\cite{hurlimann} proved that the multiple norm principle holds for any etale $k$ algebra $K_1\times K_2$ in which one component is cyclic and the other one is Galois.
Prasad and Rapinchuk \cite{prasad-rapinchuk} settled the problem of the local-global principle for embeddings of fields with involution into simple algebras with involution, where they also investigated the multiple norm principle. The multiple norm principle has been investigated further by Pollio and Rapinchuk~\cite{pollio-rapinchuk, pollio},  Demarche and D.~Wei~\cite{demarche-wei} and D.~Wei and F.~Xu~\cite{wei-xu:int_pt}. Bayer-Fluckiger, T.-Y.~Lee and Parimala \cite{BLP19} studied the Tate-Shafarevich group of general multinorm one tori in which one of factors is a cyclic extension. They give a simple rule for determining the $\Sha$-group in the case of products of extensions of prime degree $p$. T.-Y. Lee~\cite{lee:jpaa} computes explicitly the $\Sha$-group for the cases where every factor is cyclic of degree $p$-power.      

This article is organized as follows. Section~\ref{sec:P} includes preliminaries and background on Tamagawa numbers of algebraic tori, and some known results of those of CM tori due to Li--R\"ud and Guo--Sheu--Yu. Section~\ref{sec:T} discusses transfer maps, their extensions and connection with class field theory. In Sections~\ref{sec:C} and~\ref{sec:MC}
we compute the Galois cohomology groups of character groups of a class of algebraic tori $T^{K/E,k}$ and $T^{K/E,1}$. Section~\ref{sec:SG} treats Galois CM tori and in Sections~\ref{sec:NS} ans \ref{sec:Q8} we determine the precise ranges of Tamagawa numbers of CM tori. In Section~\ref{sec:P2} we show there are infinitely many pairs of linearly disjoint Galois CM fields $K_1$ and $K_2$ satisfying \eqref{eq:count_eg}. 
In the appendix Jianing Li and the fourth author prove Ono's conjecture for global fields.   

\section{Preliminaries, background and some known results}
\label{sec:P}

\subsection{Class numbers and Tamagawa numbers of algebraic tori}
\label{sec:P.1}
The cardinality of a set $S$ will be denoted by $\abs{S}$ or $\# S$. 
For any field $k$, let $\bar k$ be a fixed algebraic closure of $k$,
let $k^{\sep}$ be the separable closure of $k$ in $\bar k$ and denote by $\Gamma_k:=\Gal(k^{\sep}/k)$ the Galois group of $k$.
Let $\G_{{\rm m},k}:=\Spec k[X,X^{-1}]$ denote the
multiplicative group associated to $k$ with the usual multiplicative group law.

\begin{defn}
  (1) A connected linear algebraic group $T$ over a field $k$ is said to be an \emph{algebraic torus} over  $k$ if there exists a finite field extension
  $K/k$ such that there exists an isomorphism 
  $ T \otimes_k K \simeq \GmK^d$ of algebraic groups over $K$ for some positive integer $d$. Then $d$ is equal to 
the dimension of $T$. If $T$ is an algebraic $k$-torus and $K$ is a field extension of $k$ that satisfies the  above property,    
then $K$ is called a \emph{splitting field}
of $T$. The smallest splitting field (which is unique and Galois, see below) is
called \emph{the minimal splitting field of $T$}.

  (2) For any algebraic torus $T$ over $k$, denote by $X(T):=\Hom_{ k^{\rm sep}}(T\otimes_{k}  k^{\rm sep}, \G_{{\rm m}, k^{\rm sep}})$ the group of characters of $T$.
It is a finite free $\Z$-module of rank $d$ together with a continuous action of the Galois group $\Gamma_k$ of $k$.  
\end{defn}

It is shown in \cite[Proposition 1.2.1]{ono:arithmetic}
(also see \cite{yu:chow} for other proofs) that every algebraic $k$-torus splits over a finite \emph{separable} field extension $K/k$. The action of $\Gamma_k$ on $X(T)$ gives a continuous representation
\begin{equation}
  \label{eq:rT}
  r_T: \Gamma_k \to \Aut(X(T))
\end{equation}
which factors through a faithful representation of a finite quotient $\Gal(L/k)$ of $\Gamma_k$.
Here $L$ is the fixed field of the kernel of $r_T$ and is the smallest splitting field of $T$. In particular, $L$ is  a finite Galois extension of $k$ and $X(T)$ can be also regarded as a $\Gal(L/k)$-module.

From the remainder of this article,
$k$ denotes a global field and a finite extension $K/k$ will be separable, unless otherwise stated.
For each place $v$ of $k$, denote by
$k_v$ the completion of $k$ at $v$, and $O_{v}$ the ring of integers of $k_v$ if $v$ is finite. For each finite place $v$, the group $T(k_v)$ of $k_v$-valued points of $T$ contains a unique maximal open compact subgroup, which is denoted by $T(O_v)$. For a non-empty finite set $S$ of places of $k$ containing all non-Archimedean places if $k$ is a number field. 
Let $\A_k$ be the adele ring of of $k$.
Denote by $U_{T,S}=\prod_{v\in S} T(k_v) \times \prod_{v \notin S} T(O_v)$ the unit group with respect to $S$
and let $\Cl_S(T):=T(\A_k)/T(k) U_{T,S}$ be the \emph{$S$-class group} of $T$. By a finiteness theorem of Borel~\cite{Borel-finiteness}, $\Cl_S(T)$ is a finite group and its cardinality is denoted by $h_S(T)$, called the \emph{$S$-class number} of $T$.
If $k$ is a number field and $S=\infty$ consists of all non-Archimedean places, we write $U_T:=U_{T,\infty}$, $\Cl(T):=T(\A_k)/T(k)U_T$ the \emph{class group} of $T$ and call $h(T):=|\Cl(T)|$ the \emph{class number} of $T$.

It follows immediately from the definition that if $K/k$ is a finite extension and $T_K$ is an algebraic $K$-torus, then $h(R_{K/k}T_K)=h(T_K)$, where $R_{K/k}$ denotes the Weil restriction of scalars from $K$ to $k$. Note that
$\dim R_{K/k} T_K = \dim T_K \cdot [K:k]$. It is well known (cf. \cite{ono:arithmetic,ono:tamagawa}) that
\begin{equation}
  \label{eq:ind_char}
  X(R_{K/k}T_K)\simeq \Ind_{\Gamma_K}^{\Gamma_k} X(T_K)=\Ind_{\Gal(L/K)}^{\Gal(L/k)} X(T_K),
\end{equation}
as $\Gal(L/k)$-modules,  where $L$ is any finite Galois extension of $k$ over which the algebraic $k$-torus $R_{K/k}T_K$ splits.
 
We recall the definition of the Tamagawa number of an algebraic $k$-torus $T$.
Fix a finite Galois splitting field extension $K/k$ of $T$ with Galois group $G$.
Let $ \chi _T $ be the character of representation $ \big( X(T)\otimes {\C} , r_T \big) $ of $G$ over $\mathbb{C}$, that is,  $ \chi _T : G \rightarrow \mathbb{C} $, $ \chi _T(g) := \tr ( r_T ( g ) ) $ for all $g\in G$.
Let
\begin{equation}
  \label{eq:artinL}
L( s , K/k , \chi _T ) := \prod_{ v \nmid \infty } L_v( s , K/k , \chi _T )  
\end{equation}
be the Artin $L$-function of the character $\chi_T$, where
$L_v(s,K/k,\chi_T)$ is the local Artin $L$-factor at $v$.
It is well known (cf.~\cite{Serre-linear})  
that
$L( s , K/k , \chi _T )$ has a pole of order $a$ at $s=1$, where $a$ is the rank of the $G$-invariant sublattice $X(T)^G$.

Let $\omega$ be a nonzero invariant differential form on $T$ of highest degree defined over $k$. 
To each place $v$, one associates a Haar measure $\omega_v$ on
$T(k_v)$. Then the product of the Haar measures
\[  \prod_{ v \mid \infty } \omega_v \cdot \prod_{ v \nmid \infty }
  \left ( L_v( 1 , K/k , \chi _T ) \cdot \omega_v \right ) \]
converges absolutely and defines a Haar measure on $T(\A_k)$.

Write (N) for the number field case and (F) for the function field case.
For (N), let $d_k$ be the discriminant of $k$. For (F), $k=\Fq(C)$ is the function field of a smooth projective geometrically connected curve over $\Fq$ and let $g(C)$ be the genus of $C$. 

\begin{defn} Let $T$ be an algebraic torus over a global field $k$ and $\omega$ be a nonzero invariant differential form on $T$ defined over $k$ of highest degree. Then
\[
  \omega_{\mathbb{A} , can } := \frac{ \prod_{ v \mid \infty } \omega_v \cdot
  \prod_{ v \nmid \infty } \big( L_v( 1 , K/k , \chi _T )\cdot \omega_v \big) }{ \mu_k^{ (\dim{T}) } \cdot \rho_T },
\]
defines a Haar measure on $T(\A_k)$, which is called
the \emph{Tamagawa measure on $T(\mathbb{A}_k)$},  
where 
\[ \mu_k:=\begin{cases}
|d_k|^{1/2},&  \text{for (N);} \\
q^{g(C)}, & \text{for (F),} 
\end{cases} \]
\[ \rho_T := \lim_{ s \to 1 } ( s - 1 )^a L( s , K/k , \chi _T ), \]
and $a$ is the order of the pole of the Artin $L$-function
$L( s , K/k , \chi _T )$ at $ s = 1 $.  
\end{defn} 

Let $\xi_1, \dots ,\xi_a$ be a basis of $X(T)^G$. Define
\[ \xi: T(\A_k) \to \R_{+}^{a}, \quad x\mapsto 
(\vert\vert \xi_1(x)||,\dots, \vert \vert\xi_{a}(x)\vert\vert), \]
where $\R_+:=\{x\in \R^\times: x>0 \}\subset \R^\times$ is the connected open subgroup. 
Let $T(\A_k)^1$ denote the kernel of $\xi$; one has an isomorphism 
\[ T(\A_k)/T(\A_k)^1 \simeq  
\begin{cases} 
\R_{+}^a, & \text{for (N);} \\
(q^\Z)^a, & \text{for (F)}.
\end{cases}
\]
Let $d^\times t:=\prod_{i=1}^{a} dt_i/t_i$ be the canonical measure on $\R_{+}^a$ for (N) and $d^\times t$ be the counting measure on $(q^\Z)^a$ with measure $(\log q)^a$ on each point for (F). 
Let $\omega^1_{\A,can}$ be the unique Haar measure on $T(\A_k)^1$ such that 
\begin{equation}
  \label{eq:omega1}
  \omega_{\A,can}=\omega^1_{\A,can}\cdot d^\times t,
\end{equation}
that is, for any measurable function $F$
on $T(\A_k)$ one has
\[ \int_{T(\A_k)/T(\A_k)^1} \int_{T(\A_k)^1} F(xt) \,
\omega^1_{\A,can}\cdot d^\times t=\int_{T(\A_k)} F(x)\,
\omega_{\A,can}. \]
By a well-known theorem of Borel and Harish-Chandra \cite[Theorem
  5.6]{platonov-rapinchuk}, the quotient
  space $T(\A_k)^1/T(k)$ has finite volume with respect to every Haar
  measure. In fact $T(\A_k)^1/T(k)$ is the unique maximal
  compact subgroup of $T(\A_k)/T(k)$, because $\R_{+}^a$
  has no nontrivial compact subgroup.

\begin{defn} Let $T$ be an algebraic torus over a global field $k$.
The \emph{ Tamagawa number $\tau_k(T)$ of $T$} 
is defined by
\begin{equation}\label{eq:tau}
\tau_k(T):=\int_{T(\A_k)^1/T(k)}\ \omega^1_{\A,can},
\end{equation}
the volume of $T(\A_k)^1/T(k)$ with respect to $\omega^1_{\A,can}$, where
$\omega^1_{\A,can}$ is the Haar measure on $T(\A_k)^1$ defined in \eqref{eq:omega1}. 
\end{defn} 

One has the following properties (\cite[Theorem 3.5.1]{ono:arithmetic} cf.~\cite[Section 3.2, p.~57]{ono:tamagawa}):
\begin{enumerate} 
\item[{\rm (i)}] For any two algebraic $k$-tori $T$ and $T'$, one has $ \tau_k(T \times T') = \tau_k(T) \cdot \tau_k(T') $.
\item[{\rm (ii)}] For any finite extension $K/k$ and any algebraic $K$-torus $T_K$, one has  $\tau_k(R_{K/k}T_K) = \tau_K(T_K) $.
\item[{\rm (iii)}] One has $\tau_k({\Gm}_{,k})=1 $.
\end{enumerate}

Note that the last statement (iii) is equivalent to the analytic class number formula \cite[VIII \S 2 Theorem 5, p.~161]{Lang-ANT}.

\subsection{Values of Tamagawa numbers}\label{sec:P.2}

\begin{thm}[Ono's formula]
Let $K/k $ be a finite Galois extension with Galois group $\Gamma$, and $T$ be an algebraic torus over $k$ with splitting field $K$. Then
\begin{equation}
    \label{eq:ono-1}
   \tau_k (T) = \frac{| H^1 \big( \Gamma , X(T) \big) |}{| \Sha^1(\Gamma, T)|},   
\end{equation}
where
\begin{equation}
  \label{eq:ShaT}
\Sha^i(\Gamma, T):=\Ker \big( H^i (K/k , T) \to \prod_v H^i (K_w/k_v , T) \big)  
\end{equation}
is the Tate-Shafarevich group associated to $H^i (\Gamma , T)$ for $i\ge 0$ and $w$ is a place of $K$ over $v$.
\end{thm}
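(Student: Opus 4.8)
The plan is to prove Ono's formula by anchoring it on quasi-trivial tori, where it reduces to the identity $1 = 1/1$, and then transporting it to a general torus along a flasque resolution.

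First I would dispose of the base case. For a quasi-trivial torus $Q = R_{K'/k}\Gm$ with $k \subseteq K' \subseteq K$, Shapiro's lemma gives $H^1(\Gamma, X(Q)) = H^1(\Gamma, \Z[\Gamma/\Gal(K/K')]) = H^1(\Gal(K/K'), \Z) = 0$, while $H^1(\Gamma, Q) \cong H^1(\Gal(K/K'), \Gm) = 0$ by Hilbert~90, so $\Sha^1(\Gamma, Q) = 0$; on the other hand $\tau_k(Q) = \tau_{K'}(\Gm) = 1$ by properties~(ii) and~(iii) of Section~\ref{sec:P.1}, the last being the analytic class number formula. Hence both sides equal $1$. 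By property~(i) and the multiplicativity of $H^1$, of $\Sha^1$, and of $\tau$ under products, the formula then holds for every finite product of such tori.

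Next, for an arbitrary $k$-torus $T$ with splitting field $K$ and $\Gamma = \Gal(K/k)$, I would pick a surjection of $\Gamma$-lattices from a permutation lattice onto $X(T)$, equivalently an embedding $T \hookrightarrow Q$ into a quasi-trivial torus, arranged as a flasque resolution
\[ 1 \longrightarrow S \longrightarrow Q \longrightarrow T \longrightarrow 1, \qquad Q \text{ quasi-trivial},\ S \text{ flasque}, \]
dual to the exact sequence of lattices $0 \to X(T) \to X(Q) \to X(S) \to 0$. The bulk of the argument is then a comparison of Tamagawa measures along this sequence. Since $\chi_Q = \chi_T + \chi_S$, the convergence factors and residues factor, $L_v(1,\chi_Q) = L_v(1,\chi_T)L_v(1,\chi_S)$ and $\rho_Q = \rho_T\rho_S$; the archimedean and non-archimedean local measures are compatible with the short exact sequence of groups; and the residual discrepancy between $\tau_k(Q)$ and $\tau_k(T)\tau_k(S)$ — including the finite index relating the $\Gamma$-invariant lattices and the comparison of the groups $T(\A_k)^1$ — is governed by the failure of $Q(\A_k) \to S(\A_k)$ and $Q(k) \to S(k)$ to be surjective. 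By class field theory and Nakayama–Tate duality, packaged in the Poitou–Tate sequence \cite[Theorem 6.10]{platonov-rapinchuk}, these cokernels are expressed through the Galois cohomology of the dual lattice sequence; running the long exact sequences, using $H^1(\Gamma, X(Q)) = 0$ and $\Sha^1(\Gamma, Q) = 0$ from the base case, together with a diagram chase on the localisation maps $H^i(\Gamma, -) \to \bigoplus_v H^i(\Gamma_v, -)$, identifies the correction factor with $|H^1(\Gamma, X(T))| / |\Sha^2(\Gamma, X(T))|$. Finally $\Sha^2(\Gamma, X(T)) \cong \Sha^1(\Gamma, T)^\vee$, once more by Poitou–Tate, yields the stated form. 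One could alternatively set up an induction on $|\Gamma|$, the base case then handling the cyclic subgroups occurring as decomposition groups.

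I expect the main obstacle to be the precise bookkeeping in that measure comparison. Locally, the splitting of Haar measures in a short exact sequence is formal, but globally one must show that the defect contributes \emph{exactly} $|H^1(\Gamma, X(T))|$ in the numerator and $|\Sha^1(\Gamma, T)|$ in the denominator, with no leftover contributions from $X(T)^\Gamma$, from class groups, or from local Tamagawa discrepancies; this is exactly where the input $\tau_k(\Gm) = 1$ (hence the analytic class number formula) is consumed, and where one needs the duality between adelic cokernels and Tate–Shafarevich groups. The flasqueness of $S$ is what makes the localisation diagram chase close up, forcing the cohomology of $X(S)$ over the decomposition groups into the shape needed to read off the $\Sha$-terms.
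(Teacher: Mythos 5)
The paper gives no proof beyond the citation ``See \cite[Section 5]{ono:tamagawa}'', so the only real comparison is with Ono's original argument; your overall shape --- anchor on quasi-trivial tori, then resolve a general torus by a quasi-trivial one and track the change of measure --- is indeed the plan Ono follows, phrased in the later flasque/coflasque language. Your base case is correct: Shapiro's lemma and Hilbert~90 kill $H^1(\Gamma,X(Q))$ and $\Sha^1(\Gamma,Q)$ for quasi-trivial $Q$, and properties (ii)--(iii) give $\tau_k(Q)=1$, so \eqref{eq:ono-1} holds there, and property~(i) extends this to products.

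Two things then go wrong. First, the resolution is written inconsistently. A surjection of $\Gamma$-lattices from a permutation module onto $X(T)$, equivalently an embedding $T\hookrightarrow Q$, gives the \emph{coflasque} sequence $1\to T\to Q\to S\to 1$ dual to $0\to X(S)\to X(Q)\to X(T)\to 0$; whereas what you display, $1\to S\to Q\to T\to 1$ dual to $0\to X(T)\to X(Q)\to X(S)\to 0$, is the opposite \emph{flasque} sequence with $T$ a quotient of $Q$. The sentence about the cokernels of $Q(\A_k)\to S(\A_k)$ and $Q(k)\to S(k)$ only makes sense in the first (coflasque) direction. The two directions feed into different long exact sequences and produce different obstruction groups, so one must pick one side and carry it through consistently. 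Second, and decisively, the step you yourself flag as ``the bulk of the argument'' --- that the discrepancy between $\tau_k(Q)$ and $\tau_k(T)\tau_k(S)$ is \emph{exactly} $|H^1(\Gamma,X(T))|/|\Sha^2(\Gamma,X(T))|$, with no stray contributions from $X(T)^\Gamma$, from class groups, or from local factors --- is the entire content of the theorem, and your proposal leaves it as an allusion to Poitou--Tate plus ``bookkeeping''. Pinning it down requires an honest adelic-measure computation that threads together the definition of $T(\A_k)^1$ via the rank-$a$ lattice $X(T)^\Gamma$, the cokernels of $Q(k)\to S(k)$ and $Q(\A_k)\to S(\A_k)$, the relevant indices, and the analytic class number formula, with each resulting factor then interpreted cohomologically; that is precisely what \cite[Section 5]{ono:tamagawa} is devoted to, and it is not absorbed by a diagram chase. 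Your closing use of $\Sha^2(\Gamma,X(T))\cong\Sha^1(\Gamma,T)^\vee$ from \eqref{eq:Sha} is correct, but it sits downstream of the missing step and does not close the gap.
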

\begin{proof}
  See \cite[Section 5]{ono:tamagawa}. \qed 
\end{proof}

\begin{rem}\label{rem:indepXT}
  The cohomology groups $H^1 \big( \Gamma , X(T) \big)$ and
  $\Sha^1(\Gamma, T)$ are independent of the choice of the splitting
  field $K$;  see \cite[Sections 3.3 and 3.4]{ono:tamagawa}.
\end{rem}

According to Ono's formula, the Tamagawa number of any algebraic $k$-torus  is a positive rational number. Ono constructed an infinite family of algebraic $\Q$-tori $T$ with $\tau(T)=\tau_{\Q}(T)=1/4$,  which particularly shows that $\tau(T)$ needs not to be an integer. Ono conjectured \cite{ono:sugaku}
that every positive rational number can be realized as $\tau_k(T)$ for some algebraic $k$-torus $T$. 
Ono's conjecture was proved by S. Katayama~\cite{katayama:1985} for the number field case.

For any abelian group $G$, the Pontryagin dual of $G$ is defined to be
\begin{equation}
  \label{eq:Pdual}
   G^\vee:=\Hom(G,\Q/\Z).
\end{equation}
The Poitou-Tate duality (\cite[Theorem 6.10]{platonov-rapinchuk} and \cite[Theorem 8.6.8]{NSW})
says that there is a natural isomorphism $\Sha^1(\Gamma, T)^\vee\simeq  \Sha^2(\Gamma, X(T))$. Thus, 
\begin{equation}
    \label{eq:ono-f-2}
       \tau_k (T) = \frac{| H^1 \big( \Gamma , X(T) \big) |}{| \Sha^2(\Gamma, X(T))|}.   
\end{equation}

To simply the notation we shall often 
suppress the Galois group from Galois
cohomology groups and write $H^1(X(T))$ and $\Sha^2(X(T))$ for
$H^1(\Gamma, X(T))$ and $\Sha^2(\Gamma, X(T))$ etc.,
if there is no risk of confusion. 

\subsection{Some known results for CM tori}
\label{sec:P.3}
For the convenience of later generalization and investigation, we define here a more general class of $k$-tori as mentioned in Introduction. 

For every commutative 
etale $k$-algebra $K$, 
denote by $T^K$ the algebraic $k$-torus whose group of $R$-valued points of $T^K$
for any commutative $k$-algebra $R$, is 
\[ T^K(R)=(K \otimes_k R)^\times. \]
Explicitly, if $K=\prod_{i=1}^r K_i$ is a product of finite separable field
extensions $K_i$ of $k$,
then 
\[ T^K=\prod_{i=1}^r T^{K_i}=\prod_{i=1}^r R_{K_i/k} (\G_{m,K_i}), \] 
where $R_{K_i/k}$
is the Weil restriction of scalars from $K_i$ to $k$. 
Let $E=\prod_{i=1}^r E_i$ be a product of finite subfield extensions
$E_i\subset K_i$ over $k$. Let $N_{K_i/E_i}: T^{K_i} \to T^{E_i}$ 
be the norm map and put $N=\prod_{i=1}^r N_{K_i/E_i}: T^K \to T^E$. Define 
$T^{K/E,1}:=\Ker N$, the kernel of the norm map $N$, and 
\[ T^{K/E,k}:=\{x\in T^K : N(x)\in \G_{m,k} \}, \]
the preimage of $\G_{m,k}$ in $T^K$ under $N$, 
where $\G_{m,k}\embed T^E$ is viewed as a subtorus of $T^E$ via the diagonal
embedding. Then we have the following commutative diagram of algebraic $k$-tori in which each row is an exact sequence:

\begin{equation}\label{eq:diagram_tori}
\begin{tikzcd}
\qquad 1 \arrow{r} & T^{K/E,1}\arrow{r}{j}\arrow[equal]{d} 
& T^K\arrow{r}{N}& T^{E}\arrow{r} & 1 \\
\qquad 1 \arrow{r} & T^{K/E,1} \arrow{r}{j} & T^{K/E,k}\arrow{r}{N}
\arrow[hook]{u} & \G_{{\rm m},k}\arrow[hook]{u}{}\arrow{r} & 1.
\end{tikzcd}
\end{equation}

For the rest of this subsection we let $k=\Q$ and $K=\prod_{i=1}^r K_i$ be a CM algebra, where each $K_i$ is a CM field, with the canonical involution $\iota$. 
The subalgebra $K^+\subset K$ fixed by $\iota$ is the product $K^+=\prod_{i=1}^rK_i^+$ of the maximal totally real subfields $K_i^+$ of $K_i$.  
Let $T^{K,1}:=T^{K/K^+,1}=\Ker N_{K/K^+}$ be the associated norm one CM torus and $T^{K,\Q}:=T^{K/K^+,\Q}$ the associated CM torus, respectively. As before, we have the following exact sequence of algebraic tori over $\Q$
\begin{equation}\label{exact sequence of T^K}
\begin{tikzcd}
    1 \arrow{r} & T^{K,1}\arrow{r} & T^K\arrow{r}{N_{K\slash K^+}} & T^{K^+}\arrow{r} & 1,
\end{tikzcd}
\end{equation}
and a commutative diagram similar to \eqref{eq:diagram_tori}.

\begin{prop}\label{formula}
Let $K$ be a CM algebra and $T=T^{K,\Q}$ the associated CM torus over $\Q$. 

{\rm (1)} We have
\begin{equation}
  \label{eq:CMformula}
  \tau(T)=\frac{2^r}{n_K},
\end{equation}
where $r$ is the number of components of $K$ and
\begin{equation}
  \label{eq:nK}
  n_K:=[\A^\times: N(T(\A))\cdot \Q^\times]
\end{equation}
is the global norm index associated to $T$. 

{\rm (2)} We have
\begin{equation}
    \label{eq:bound_nK}
    n_K\, \Big{|}\!  \prod_{p\in S_{K/K^+}}\!\!\!\! e_{T,p},
\end{equation}  
where $S_{K/K^+}$ is the finite set of rational primes $p$
with some place $v|p$ of $K^+$ ramified in $K$, and 
$e_{T, p}:=[\Z^\times_p: N(\,T(\Z_p)\,)]$. Here $T(\Zp)$ denotes the unique maximal open compact subgroup of $T(\Qp)$.
\end{prop}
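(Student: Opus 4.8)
The plan is to prove Proposition~\ref{formula} by combining Ono's formula~\eqref{eq:ono-f-2} with a direct analysis of the norm exact sequence~\eqref{diagram for exact sequence of T^K}, following the strategy of \cite{guo-sheu-yu:CM}. For part~(1), the idea is that the CM torus $T=T^{K,\Q}$ sits in the short exact sequence $0\to T^{K,1}\to T\xrightarrow{N}\Gm\to 0$, and one wants to express $\tau(T)$ in terms of the norm map on adelic points. First I would use multiplicativity and the Weil restriction property \cite[Section 3.2]{ono:tamagawa} (properties (i)--(iii) recalled in Section~\ref{sec:P.1}), together with $\tau(\Gm)=1$, to reduce the computation to the norm-one torus and the defect of surjectivity of $N$ on adeles. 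Concretely, the exact sequence gives an exact sequence of adelic points with cokernel controlled by $H^1$, and the global points sit inside $\Q^\times\subset\A^\times$ via the norm; the quotient $\A^\times/(N(T(\A))\cdot\Q^\times)$ is exactly what measures the failure of $\tau(\Gm)=1$ to propagate. Tracking the Tamagawa measures through the sequence and using that $\tau(T^{K_i,1})$ for the norm-one torus of a quadratic extension $K_i/K_i^+$ equals $2$ (the classical computation, since $H^1(\Gamma,X(T^{K_i,1}))\simeq \Z/2$ and the relevant $\Sha^1$ vanishes), one arrives at $\tau(T^K)=2^r$ would be the naive guess, but the correct bookkeeping with the $\Gm$-preimage rather than the full $T^{K^+}$ produces the correction factor $1/n_K$, giving $\tau(T)=2^r/n_K$.

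For part~(2), the plan is to bound $n_K=[\A^\times:N(T(\A))\cdot\Q^\times]$ by a purely local computation. The key observation is that $N(T(\A))$ contains $N(T(\wh O_k))\cdot N(T(\R))$ at least up to the archimedean and unramified places, and for a prime $p$ not in $S_{K/K^+}$, the extension $K\otimes\Q_p$ over $K^+\otimes\Q_p$ is unramified at every place, hence the local norm map $N\colon T(\Z_p)\to\Z_p^\times$ is surjective, i.e.\ $e_{T,p}=1$. This is the standard fact that the norm from the units of an unramified extension of local fields is surjective onto the units. Therefore the only obstruction to $N(T(\A))$ covering all of $\A^\times$ (modulo $\Q^\times$ and modulo the image at archimedean places, which is handled by the sign/positivity already absorbed) comes from the finitely many ramified primes $p\in S_{K/K^+}$, each contributing a factor dividing $e_{T,p}=[\Z_p^\times:N(T(\Z_p))]$. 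A clean way to organize this is: by strong approximation for $\Gm$ (i.e.\ $\A^\times=\Q^\times\cdot(\R_{>0}\times\wh\Z^\times)$ up to the class number one of $\Q$, together with $\R^\times$), one reduces $n_K$ to the index of $N(T(\wh O))$ inside $\wh\Z^\times=\prod_p\Z_p^\times$, which factors as $\prod_p e_{T,p}$, and all but the ramified factors are trivial; hence $n_K\mid\prod_{p\in S_{K/K^+}}e_{T,p}$.

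The main obstacle I anticipate is the careful matching of Tamagawa measures in part~(1) — in particular, making sure the correction factor is exactly $n_K$ and not $n_K$ times some extra local or archimedean contribution. This requires being precise about the relation between $T(\A_k)^1$ for $T$ versus for $\Gm$ and $T^{K,1}$, about how the measure $\omega^1_{\A,can}$ behaves under the norm exact sequence (the factor $d^\times t$ and the archimedean contributions must be tracked), and about whether $N(T(\R))$ is all of $\R^\times$ or only $\R^\times_{>0}$ (it is only the positive reals, since a norm from $\C$ to $\R$ is positive and $K_i^+$ is totally real, but this sign is precisely absorbed into the definition of $T^{K,\Q}$ as the $\Gm$-preimage, so no extra factor appears). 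A secondary subtlety in part~(2) is verifying that at the ramified primes one genuinely has $N(T(\A))_p=N(T(\Q_p))$ contributing only through $N(T(\Z_p))$, i.e.\ that the full local norm group $N(T(\Q_p))$ times $\Q^\times$ does not already swallow the unit defect; but since $\Q^\times$ embedded diagonally meets $\Z_p^\times$ only in $\{\pm1\}$ (via global units), this is controlled, and the divisibility — rather than equality — in~\eqref{eq:bound_nK} leaves room for exactly this slack. I would cite \cite{guo-sheu-yu:CM} for the detailed measure-theoretic computation and present the proof here as the streamlined argument through Ono's formula and the local surjectivity of unramified norms.
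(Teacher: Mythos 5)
The paper does not actually supply a proof of this proposition; it is disposed of in one line by citing \cite[Theorem~1.1(1) and Lemma~4.6(2)]{guo-sheu-yu:CM}. So there is no internal argument to compare against, and your proposal is best read as a reconstruction of what the cited reference does. As such, the ingredients you name are the right ones: $\tau(T^{K_i,1}) = 2$ from Ono's theorem for cyclic norm-one tori together with Weil-restriction invariance of $\tau$, hence $\tau(T^{K,1}) = 2^r$; the index $n_K = [\A^\times : N(T(\A))\cdot\Q^\times]$ as the correction in passing from $T^{K,1}\times\Gm$ to the honest $\Gm$-preimage $T^{K,\Q}$; and, for part~(2), surjectivity of the unit norm in unramified local extensions giving $e_{T,p}=1$ for $p\notin S_{K/K^+}$, with $\A^\times = \Q^\times\cdot(\R_{>0}\times\wh\Z^\times)$ reducing $n_K$ to an index inside $\wh\Z^\times$.

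Two things are off, one cosmetic and one substantive. Cosmetically: you write that ``$\tau(T^K)=2^r$ would be the naive guess,'' but $\tau(T^K)=1$ because $T^K$ is a product of Weil restrictions of $\Gm$; you mean $\tau(T^{K,1})=2^r$, and that the naive guess for $\tau(T^{K,\Q})$ would also be $2^r$. Your parenthetical about the archimedean sign being ``absorbed into the definition of $T^{K,\Q}$'' is also misleading: $N(T^{K,\Q}(\R))$ is still only $\R_{>0}$, not $\R^\times$, and what actually kills the sign is that $\Q^\times$ already contributes $-1$ in the decomposition $\A^\times = \Q^\times\times\R_{>0}\times\wh\Z^\times$. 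Substantively: the crux of part~(1) is turning the exact sequence $1\to T^{K,1}\to T\to\Gm\to 1$ into the precise equality $\tau(T)=\tau(T^{K,1})/n_K$; this requires fibering the integral defining $\tau(T)$ over the image $N(T(\A))\cdot\Q^\times/\Q^\times\subset\A^\times/\Q^\times$ with fiber $T^{K,1}(\A)^1/T^{K,1}(\Q)$, and verifying that the canonical Tamagawa measures are compatible so that no extra local or global factor appears. You explicitly leave this as a black box deferred to \cite{guo-sheu-yu:CM} — which is honest, but it means that as a standalone proof the central step of part~(1) is missing; as a summary of the argument in the cited reference, it is accurate.
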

\begin{proof}
  (1) This is \cite[Theorem 1.1(1)]{guo-sheu-yu:CM}. (2) This is \cite[Lemma 4.6(2)]{guo-sheu-yu:CM}. \qed
\end{proof}

\begin{lemma}\label{lm:E}
  Let $K$ and $T$ be as in Proposition~\ref{formula}.

 {\rm (1)} If $K$ contains an imaginary quadratic field, then $n_K\in \{1,2\}$ and $\tau(T)\in \{ 2^{r-1}, 2^r\}$. 

 {\rm (2)} If $K$ contains two distinct imaginary quadratic fields, then $n_K=1$ and $\tau(T)=2^r$.    
\end{lemma}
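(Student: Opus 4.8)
The plan is to deduce everything from the formula $\tau(T)=2^r/n_K$ of Proposition~\ref{formula}(1) by bounding from below the adelic norm image $N(T(\A))\subseteq\Gm(\A)=\A^\times$ inside the global norm index $n_K=[\A^\times: N(T(\A))\cdot\Q^\times]$, using global class field theory. Here $N\colon T^{K,\Q}\to\Gm$ is the norm map, and by the definition $T^{K,\Q}=N_{K/K^+}^{-1}(\Gm)$ one has $T(\A)=T^{K,\Q}(\A)=\{t\in T^K(\A):N_{K/K^+}(t)\in\Gm(\A)\}$, where $\Gm\hookrightarrow T^{K^+}$ is the diagonal subtorus.

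First I would unwind the hypothesis of (1): that $K=\prod_{i=1}^rK_i$ contains an imaginary quadratic field $F$ means there is a $\Q$-algebra embedding $F\hookrightarrow K$, and composing with the projections $K\to K_i$ gives $\Q$-embeddings $\iota_i\colon F\hookrightarrow K_i$ for every $i$ (each is nonzero, hence injective). Since $F$ is not totally real while $K_i^+$ is, $F\cap K_i^+=\Q$ inside $K_i$, so $K_i=F\cdot K_i^+\cong F\otimes_\Q K_i^+$ as extensions of $K_i^+$, and the canonical involution $\iota$ of $K_i$ restricts on $F$ to its complex conjugation. Hence for $x\in F$ one computes $N_{K_i/K_i^+}(\iota_i(x))=\iota_i\bigl(N_{F/\Q}(x)\bigr)\in\Q\subseteq K_i^+$, and the same identity persists after $\otimes_\Q\A$. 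Consequently, for $x\in\A_F^\times$ the element $(\iota_i(x))_i\in T^K(\A)$ satisfies $N_{K/K^+}\bigl((\iota_i(x))_i\bigr)=$ the diagonal image of $N_{F/\Q}(x)\in\A^\times=\Gm(\A)$, so $(\iota_i(x))_i\in T(\A)$ and its norm equals $N_{F/\Q}(x)$. Therefore $N_{F/\Q}(\A_F^\times)\subseteq N(T(\A))$.

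Granting this, (1) is immediate: $n_K$ divides $[\A^\times:\Q^\times\cdot N_{F/\Q}(\A_F^\times)]=[F:\Q]=2$ by the norm-index theorem of global class field theory, and since $n_K\ge 1$ we conclude $n_K\in\{1,2\}$ and $\tau(T)=2^r/n_K\in\{2^r,2^{r-1}\}$. For (2), if $F_1\neq F_2$ are two distinct imaginary quadratic subfields of $K$, the same argument gives $N(T(\A))\supseteq N_{F_1/\Q}(\A_{F_1}^\times)\cdot N_{F_2/\Q}(\A_{F_2}^\times)$, so the subgroup $H:=\Q^\times\cdot N_{F_1/\Q}(\A_{F_1}^\times)\cdot N_{F_2/\Q}(\A_{F_2}^\times)$ of $\A^\times$ is contained in $N(T(\A))\cdot\Q^\times$. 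By class field theory $H$ is the norm subgroup of $\A^\times$ attached to $F_1\cap F_2=\Q$ (a product of norm subgroups corresponds to the intersection of the associated abelian extensions), hence $H=\A^\times$; equivalently, since the quadratic idele class characters $\chi_{F_1}\neq\chi_{F_2}$ cutting out $F_1,F_2$ are distinct, no nontrivial character of $\A^\times/\Q^\times$ is trivial on both $N_{F_j/\Q}(\A_{F_j}^\times)$. Thus $n_K=1$ and $\tau(T)=2^r$.

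The only step requiring care is the norm identity $N_{K_i/K_i^+}\circ\iota_i=N_{F/\Q}$ on $F$ together with its adelic avatar, which rests on the decomposition $K_i\cong F\otimes_\Q K_i^+$ over $K_i^+$ and on identifying $T^{K,\Q}(\A)$ with $\{t\in T^K(\A):N_{K/K^+}(t)\in\Gm(\A)\}$; beyond this the lemma is a direct application of global class field theory and presents no genuine obstacle.
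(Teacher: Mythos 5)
Your proof is correct; the paper's own ``proof'' is simply a citation to Lemma~4.7 of Guo--Sheu--Yu together with the remark that it is a class field theory argument, and what you have written is precisely that argument: exhibit $N_{F/\Q}(\A_F^\times)\subseteq N(T(\A))$ via the diagonal copy of $F$ in $K$, invoke the norm-index theorem to bound $n_K$ by $[F:\Q]=2$, and for (2) use that the norm subgroups of two distinct quadratic fields generate all of $\A^\times$. Your careful justification of $K_i\cong F\otimes_\Q K_i^+$ over $K_i^+$ (via $F\cap K_i^+=\Q$ and linear disjointness), which yields the key identity $N_{K_i/K_i^+}\circ\iota_i=N_{F/\Q}$ on $F$, supplies exactly the detail the paper leaves implicit when it says the same proof covers the CM-algebra case.
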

\begin{proof}
  This is proved in \cite[Lemma 4.7]{guo-sheu-yu:CM} (cf.~\cite[Section 5.1]{guo-sheu-yu:CM}) in the case where $K$ is a CM field.
  The same proof using class field theory also proves the CM algebra case. \qed 
\end{proof}

\begin{prop}\label{LiRu}
  Let $K$ be a CM field and $T$ the associated CM torus over $\Q$. Put $g=[K^+:\Q]$ and let $K^{\Gal}$ be the Galois closure of $K$ over $\Q$ with Galois group $\Gal(K^{\Gal}/\Q)=G$. 

{\rm (1)} If $g$ is odd, then $H^1(X(T))=0$.

{\rm (2)} If $K/\Q$ is Galois and $g$ is odd, then $\tau(T)=1$.

{\rm (3)} If $K/\Q$ is cyclic, then $\tau(T)=1$.

{\rm (4)} If $K/\Q$ is Galois of degree $4$, then $\tau(T)\in\{1,2\}$. Moreover, $\tau(T)=2$ if and only if $G \simeq (\Z/2\Z)^2$.  
\end{prop}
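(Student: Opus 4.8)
The plan is to run everything through Ono's formula in the shape $\tau(T)=|H^{1}(G,X(T))|/|\Sha^{2}(G,X(T))|$ (cf.~\eqref{eq:ono-f-2}), together with two structural facts about a Galois CM field that I would establish first: the image $\iota$ of complex conjugation is a \emph{central} element of order $2$ in $G$, because any automorphism preserves the maximal totally real subfield, so that $\langle\iota\rangle=\Gal(K/K^{+})$ is normal of order $2$; and consequently the decomposition group of $K/\Q$ at the archimedean place is exactly $\langle\iota\rangle$. Throughout put $L=K^{\Gal}$, $G=\Gal(L/\Q)$, $H=\Gal(L/K)$, $\wt N=\Gal(L/K^{+})$, $N=\wt N/H\cong\Z/2\Z$, so that Theorem~\ref{I.1} applies with $r=1$, $E=K^{+}$, and $X(T)=\Lambda$ fits in \eqref{eq:I.1}.

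For (1) I would invoke Theorem~\ref{I.1}(2): with $r=1$ it gives $H^{1}(G,X(T))\cong\Ker\!\big(\Ver_{G,N}^{\vee}\colon N^{\mathrm{ab},\vee}\to G^{\mathrm{ab},\vee}\big)$, and since $|N|=2$ this vanishes exactly when the transfer $\Ver_{G,N}\colon G^{\mathrm{ab}}\to N$ is surjective. To prove surjectivity when $g=[K^{+}:\Q]=[G:\wt N]$ is odd, I would evaluate $\Ver_{G,N}$ on the class of a complex conjugation $c\in G$ by the orbit formula for the transfer: every conjugate $x^{-1}cx$ fixes the totally real field $K^{+}$, so $c$ itself lies in $\wt N$ and fixes \emph{every} coset of $G/\wt N$; hence $\Ver_{G,N}(\bar c)=\prod_{j=1}^{g}\big(x_{j}^{-1}c\,x_{j}\bmod D(\wt N)H\big)$ for a transversal $x_{1},\dots,x_{g}$ of $\wt N$ in $G$. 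Each $x_{j}^{-1}cx_{j}$ is again a complex conjugation, so it does not fix the CM field $K$ and therefore maps to the nonzero element of $N\cong\Z/2\Z$; since $g$ is odd the product is that nonzero element, so $\Ver_{G,N}$ is onto and $H^{1}(G,X(T))=0$.

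For (2) and (3) we have $L=K$, and I would use the identification $\Lambda^{1}=X(T^{K,1})\cong\Ind_{\langle\iota\rangle}^{G}\Z^{-}$ (the module induced from the sign character of $\langle\iota\rangle$); then Shapiro gives $H^{q}(G,\Lambda^{1})\cong\widehat{H}^{q}(\langle\iota\rangle,\Z^{-})$, which is $\Z/2\Z$ for $q=1$ and $0$ for $q=2$. In case (3), $G$ is cyclic: by Chebotarev there is a prime whose Frobenius generates $G$, so its decomposition group is all of $G$ and hence $\Sha^{i}(G,M)=0$ for every $G$-module $M$; moreover, for abelian $G$ the transfer is $\Ver_{G,\langle\iota\rangle}(x)=x^{[G:\langle\iota\rangle]}$, whose image is the order-$2$ subgroup $\langle\iota\rangle$, so $H^{1}(G,X(T))=0$ by Theorem~\ref{I.1}(2) as above, and $\tau(T)=1$. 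In case (2), $g$ is odd, so $G\cong\langle\iota\rangle\times G^{+}$ by Schur--Zassenhaus ($\iota$ is central of order prime to $|G^{+}|=g$); part (1) gives $H^{1}(G,X(T))=0$, hence $\tau(T)=1/|\Sha^{2}(G,X(T))|$ and it remains to prove $\Sha^{2}(G,X(T))=0$. Its $2$-primary part injects, by restriction to the Sylow-$2$-subgroup $\langle\iota\rangle=G_{\infty}$, into $H^{2}(\langle\iota\rangle,X(T))$, and is killed there by the definition of $\Sha^{2}$. For an odd prime $p$, let $P$ be a Sylow-$p$-subgroup and $M=K^{P}$: since $\iota$ is central and $\iota\notin P$, Mackey's formula shows $\Res_{P}\Lambda^{1}$ is a free $\Z[P]$-module, so $H^{q}(P,\Lambda^{1})=0$ for $q\ge 1$ and $H^{2}(P,X(T))\cong H^{2}(P,\Z)\cong P^{\mathrm{ab},\vee}$ compatibly with restriction to subgroups of $P$; as every cyclic subgroup of $P$ occurs as a decomposition group of $K/M$ (Chebotarev) and a homomorphism $P\to\Q/\Z$ that vanishes on all cyclic subgroups is zero, the Tate--Shafarevich group of $X(T)$ for $P$ relative to $M$ vanishes, and restricting from $G$ to $P$ (injective on the $p$-part) kills the $p$-part of $\Sha^{2}(G,X(T))$. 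Hence $\Sha^{2}(G,X(T))=0$ and $\tau(T)=1$.

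For (4), $|G|=4$ forces $G\cong\Z/4\Z$ or $G\cong(\Z/2\Z)^{2}$; if $G$ is cyclic then (3) gives $\tau(T)=1$, while if $G\cong(\Z/2\Z)^{2}$ then $K$ is totally imaginary biquadratic and its two quadratic subfields other than $K^{+}$ must be imaginary (a real one would force $K$, their compositum with $K^{+}$, to be totally real), so $K$ contains two distinct imaginary quadratic fields and Lemma~\ref{lm:E}(2) yields $\tau(T)=2$; the ``furthermore'' clause follows since these are the only two cases. The step I expect to be the main obstacle is the vanishing of $\Sha^{2}$ in part (2): the computation of $H^{1}$ is immediate once Theorem~\ref{I.1}(2) and the transfer formula are in hand, and (4) is immediate from Lemma~\ref{lm:E}, but getting $\Sha^{2}=0$ requires the clean separation between the $2$-part (controlled by the archimedean place, where the decomposition group is the full $2$-Sylow $\langle\iota\rangle$) and the odd part (controlled by the Mackey-freeness of $\Res_{P}\Lambda^{1}$, which reduces the problem to the trivially vanishing $\Sha^{2}$ of the constant module $\Z$).
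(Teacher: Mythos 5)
Your proposal is correct, and it is a genuinely different route: the paper itself does not prove Proposition~\ref{LiRu} but simply cites Li--R\"ud's Propositions~A.2 and~A.12 in the appendix of \cite{achter-altug-gordon}. You instead give a self-contained argument built on the paper's own transfer machinery. For (1) you read off $H^1(G,\Lambda)$ from Theorem~\ref{I.1}(2) as the kernel of $\Ver_{G,N}^\vee$ with $N\cong\Z/2\Z$, and then force surjectivity of the transfer by evaluating it at complex conjugation; the two facts you need — that every conjugate of $c$ already lies in $\wt N$ (since it fixes the totally real $K^+$) yet lies outside $H$ (since $K$ is totally imaginary) — are exactly right, and since $[\wt N:H]=2$ the group $N^{\rm ab}=\wt N/D(\wt N)H$ is just $\wt N/H$, so each factor contributes the nontrivial class and the product of an odd number $g$ of them is nontrivial. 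For (3) the Chebotarev argument killing all of $\Sha^\bullet$ is clean, and your $\Ver_{G,H}(x)=x^{[G:H]}$ formula for abelian $G$ sends a generator to the unique element of order $2$, so $H^1=0$. The real content is in (2): the two-primary part of $\Sha^2(G,\Lambda)$ dies by restriction to the order-$2$ Sylow $\langle\iota\rangle$, which is itself a decomposition group at the archimedean place, and for odd $p$ your Mackey observation that $\Res_P\Lambda^1$ is $\Z[P]$-free (because $\iota$ is central and meets $P$ trivially) reduces $H^2(P,\Lambda)$ to $P^{\mathrm{ab},\vee}$ compatibly with restriction to the cyclic decomposition groups supplied by Chebotarev over $K^P$, and a character vanishing on every cyclic subgroup is zero. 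This is essentially the same Sylow-by-Sylow strategy that makes Proposition~\ref{prop:rued} work, but localized at each prime with the Mackey-freeness step doing the work at odd $p$. Part (4) is the short case split you give, with the biquadratic case handled via Lemma~\ref{lm:E}(2). Compared with deferring to \cite{achter-altug-gordon}, your argument has the advantage of being entirely internal to the paper's framework (transfers, \eqref{eq:I.1}, Shapiro/Mackey), at the modest cost of carrying out the Sylow reduction for $\Sha^2$ by hand.
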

\begin{proof}
  See Propositions A.2 and A.12 of \cite{achter-altug-gordon}. \qed 
\end{proof}

Note that Proposition~\ref{LiRu} (4) also follows from Proposition~\ref{LiRu}(3) and Lemma~\ref{lm:E} (2). 

\section{Transfer maps, corestriction maps and extensions}
\label{sec:T}

\subsection{Transfer maps}
\label{sec:T.1}

Let $G$ be a group and let $H$ be a subgroup of $G$ of finite index. Let $X:=G/H$, and let $\varphi: X\to G$ be a section. If $g\in G$ and $x\in X$, the elements $\varphi(gx)$ and $g\varphi(x)$ belong to the same class of mod $H$; hence there exists a unique element $h^\varphi_{g,x}\in H$ such that  $g\varphi(x)=\varphi(gx)h^\varphi_{g,x}$. Let $\Ver_{G,H}(g)\in H^{\rm ab}$ be defined by:
\[ \Ver_{G,H}(g):=\prod_{x\in X} h^\varphi_{g,x} \mod D(H), \]
where $D(H)=[H,H]$ is the commutator group of $H$ and the product is computed in $H^{\rm ab}=H/D(H)$.

By \cite[Theorem 7.1]{Serre-finite-grp}, the map $\Ver_{G,H}: G\to H^{\rm ab}$ is a group homomorphism and it does not depend on the choice of the section $\varphi$. This homomorphism is called the \emph{transfer} of $G$ into $H^{\rm ab}$ (originally from the term  ``Verlagerung'' in German). One may also view it as a homomorphism $\Ver_{G,H}:G^{\rm ab}\to H^{\rm ab}$.

In the literature one also uses the right coset space $X':=H\backslash G$ but this does not effect the result. One can easily show that if $\Ver_{G,H}'$ is the transfer map defined using $X'$, then $\Ver_{G,H}'=\Ver_{G,H}$.
One has the following functorial property (see~\cite[p.~89]{Serre-finite-grp}).

\begin{lemma}\label{lm:func}
  Let $H\subset G$ and $H'\subset G'$ be subgroups of finite index.  
  If $\sigma$ be a group homomorphism from the pair $(G,H)$ to $(G',H')$ which induces a bijection $G/H\isoto G'/H'$, then the following diagram  
\begin{equation} \label{eq:func}
\begin{tikzcd}
  \qquad  & G^{\rm ab} \arrow{r}{\sigma} \arrow{d}{\Ver_{G,H}}
  & G^{' \rm ab} \arrow{d}{\Ver_{G'.H'}} \\  
\qquad & H^{\rm ab} \arrow{r}{\sigma} \arrow{r} & H^{' \rm ab}
\end{tikzcd}  
\end{equation}
commutes.
\end{lemma}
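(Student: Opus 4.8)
The plan is to compute the two transfers appearing in \eqref{eq:func} using coset representatives on the $G'$-side that are the images under $\sigma$ of coset representatives on the $G$-side; since the transfer does not depend on the chosen section (as recalled before Lemma~\ref{eq:RC}), this is legitimate. First I would fix a section $\varphi \colon G/H \to G$ of the quotient map and write $\bar\sigma \colon G/H \to G'/H'$ for the bijection induced by $\sigma$; one records the equivariance $\bar\sigma(gx) = \sigma(g)\,\bar\sigma(x)$ for $g \in G$, $x \in G/H$. Because $\bar\sigma$ is a bijection, the composite $\varphi' := \sigma \circ \varphi \circ \bar\sigma^{-1} \colon G'/H' \to G'$ is again a section: for $x' \in G'/H'$ the coset of $\varphi'(x')$ modulo $H'$ is $\bar\sigma$ applied to the coset of $\varphi(\bar\sigma^{-1}(x'))$, i.e.\ $\bar\sigma(\bar\sigma^{-1}(x')) = x'$.

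Next I would relate the elements $h^{\varphi}_{g,x}$ and $h^{\varphi'}_{g',x'}$ occurring in the definition of the transfer. Given $g \in G$ and $x \in G/H$, apply $\sigma$ to the defining relation $g\,\varphi(x) = \varphi(gx)\,h^\varphi_{g,x}$. Using $\sigma \circ \varphi = \varphi' \circ \bar\sigma$ together with the equivariance above, this becomes
\[
\sigma(g)\,\varphi'\big(\bar\sigma(x)\big) = \varphi'\big(\sigma(g)\,\bar\sigma(x)\big)\,\sigma\big(h^\varphi_{g,x}\big),
\]
with $\sigma(h^\varphi_{g,x}) \in H'$. By the uniqueness in the definition of $h^{\varphi'}$, this forces $h^{\varphi'}_{\sigma(g),\,\bar\sigma(x)} = \sigma\big(h^\varphi_{g,x}\big)$.

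Finally, since $x \mapsto \bar\sigma(x)$ runs over $G'/H'$ as $x$ runs over $G/H$, and since $\sigma$ induces a homomorphism $H^{\rm ab} \to H'^{\rm ab}$ (so it carries $D(H)$ into $D(H')$ and commutes with the products, which may be taken in any order in the abelian quotients), I would conclude
\[
\Ver_{G',H'}\big(\sigma(g)\big) = \prod_{x' \in G'/H'} h^{\varphi'}_{\sigma(g),\,x'} = \prod_{x \in G/H} \sigma\big(h^\varphi_{g,x}\big) = \sigma\Big(\prod_{x \in G/H} h^\varphi_{g,x}\Big) = \sigma\big(\Ver_{G,H}(g)\big),
\]
which is exactly the commutativity of \eqref{eq:func}. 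The argument is essentially a substitution; the only points needing care — and the closest thing to an obstacle — are verifying that $\varphi'$ is genuinely a section (this is precisely where the hypothesis that $\sigma$ induces a bijection $G/H \isoto G'/H'$ is used) and the equivariance of $\bar\sigma$, after which everything is formal.
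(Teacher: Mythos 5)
Your proof is correct, and it is the standard direct verification of the functoriality of the transfer; the paper itself simply cites Serre (\emph{Finite Groups}, p.~89) rather than writing out an argument, so there is no alternative route in the paper to compare against. The three things you isolate---that $\varphi' := \sigma \circ \varphi \circ \bar\sigma^{-1}$ is a genuine section (which is exactly where the bijectivity of $\bar\sigma$ is used), the equivariance $\bar\sigma(gx)=\sigma(g)\bar\sigma(x)$, and the resulting identity $h^{\varphi'}_{\sigma(g),\bar\sigma(x)}=\sigma(h^\varphi_{g,x})$ obtained by applying $\sigma$ to the defining relation and invoking uniqueness---are precisely the right ingredients, and together with the independence of the transfer from the choice of section and the fact that $\sigma$ carries $D(H)$ into $D(H')$, the computation closes cleanly.
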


\begin{lemma}\label{lm:product}
  Let $G_1$ and $G_2$ be groups, and let $H_i\subset G_i$ be a subgroup of finite index for $i=1,2$. Then
\[  \Ver_{G_1\times G_2, H_1\times H_1}(g_1,g_2)=\Ver_{G_1,H_1}(g_1)^{[G_2:H_2]} \cdot \Ver_{G_2,H_2}(g_2)^{[G_1:H_1]}. \]
\end{lemma}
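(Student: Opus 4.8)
The plan is to compute both sides directly from the section‑theoretic definition of the transfer given just above, using the fact that a product of sections is a section for the product group.

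First I would fix sections $\varphi_i\colon X_i:=G_i/H_i\to G_i$ for $i=1,2$. Since $(G_1\times G_2)/(H_1\times H_2)$ is canonically identified with $X_1\times X_2$, the map $\varphi:=\varphi_1\times\varphi_2$ is a section for the product pair $(G_1\times G_2,\,H_1\times H_2)$. For $g=(g_1,g_2)$ and $x=(x_1,x_2)$, the defining relation $g\,\varphi(x)=\varphi(gx)\,h^{\varphi}_{g,x}$ splits coordinatewise, so $h^{\varphi}_{g,x}=(h^{\varphi_1}_{g_1,x_1},\,h^{\varphi_2}_{g_2,x_2})\in H_1\times H_2$.

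Next I would invoke the canonical isomorphism $(H_1\times H_2)^{\rm ab}\simeq H_1^{\rm ab}\times H_2^{\rm ab}$, equivalently $D(H_1\times H_2)=D(H_1)\times D(H_2)$, under which projection onto each factor is a group homomorphism. Applying the first projection to
\[
\Ver_{G_1\times G_2,\,H_1\times H_2}(g_1,g_2)=\prod_{(x_1,x_2)\in X_1\times X_2} h^{\varphi}_{g,x}\ \bmod D(H_1\times H_2)
\]
gives $\prod_{(x_1,x_2)} h^{\varphi_1}_{g_1,x_1}\ \bmod D(H_1)$; since $h^{\varphi_1}_{g_1,x_1}$ does not depend on $x_2$ and $|X_2|=[G_2:H_2]$, this equals $\bigl(\prod_{x_1\in X_1} h^{\varphi_1}_{g_1,x_1}\bigr)^{[G_2:H_2]}=\Ver_{G_1,H_1}(g_1)^{[G_2:H_2]}$ in $H_1^{\rm ab}$, the reordering of factors and the collecting of the exponent being legitimate modulo $D(H_1)$. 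Symmetrically, the second projection yields $\Ver_{G_2,H_2}(g_2)^{[G_1:H_1]}$ in $H_2^{\rm ab}$. Recombining the two coordinates inside $(H_1\times H_2)^{\rm ab}$, where they commute, produces the asserted identity. (Note the statement should read $H_1\times H_2$ in the subscript; ``$H_1\times H_1$'' is a typo.)

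There is essentially no hard step: the only point demanding care is that the product defining the transfer is order‑sensitive inside the nonabelian group $H$ but becomes well defined in $H^{\rm ab}$ — already recorded in the excerpt — so the coordinatewise computation and the collapsing of like terms into the exponents $[G_2:H_2]$ and $[G_1:H_1]$ must be performed only after reduction modulo the commutator subgroups. As an alternative one could deduce the formula from the functoriality of Lemma~\ref{lm:func} (applied to the inclusions of pairs $(G_1,H_1)\hookrightarrow(G_1\times G_2,\,H_1\times G_2)$ and likewise on the other factor) together with transitivity of the transfer along $H_1\times H_2\subset H_1\times G_2\subset G_1\times G_2$, but the direct section computation above is the cleanest route.
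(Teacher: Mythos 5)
Your proof is correct and follows essentially the same route as the paper: fix coordinatewise sections, observe that $h^\varphi_{g,x}$ factors componentwise, and collect the product over $X_1\times X_2$ inside $H^{\rm ab}=H_1^{\rm ab}\times H_2^{\rm ab}$ to pick up the exponents $[G_2:H_2]$ and $[G_1:H_1]$. You also correctly identify the typo ``$H_1\times H_1$'' for ``$H_1\times H_2$'' in the statement.
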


\begin{proof}
  Put $G=G_1\times G_2$, $H=H_1\times H_2$, $X_i:=G_i/H_i$ and $X=G/H=X_1\times X_2$. Fix a section $\varphi_i:X_i \to G_i$ for each $i$ and let $\varphi=(\varphi_1,\varphi_2):X\to G$. For $g=(g_1,g_2)$ and $x=(x_1,x_2)$, we have $h^\varphi_{g,x}=(h^{\varphi_1}_{g_1,x_1},h^{\varphi_2}_{g_2,x_2})$. Then in $H^{\rm ab}=H_1^{\rm ab} \times H_2^{\rm ab}$ we have 
  \[
    \begin{split}
      \Ver_{G,H}(g)&=\prod_{x\in X} h^\varphi_{g,x}=\prod_{x_1\in X_1} \prod_{x_2\in X_2} (h^{\varphi_1}_{g_1,x_1}, h^{\varphi_2}_{g_2,x_2})\\
      &=\prod_{x_1\in X_1} ((h^{\varphi_1}_{g_1,x_1})^{|X_2|}, \Ver_{G_2,H_2}(g_2))= (\Ver_{G_1,H_1}(g_1)^{|X_2|}, \Ver_{G_2,H_2}(g_2)^{|X_1|}). \quad \text{\qed}
    \end{split}  \]    
\end{proof}

If $G$ is a finite group and $p$ is a prime, $G_p$ denotes a $p$-Sylow subgroup of $G$.

\begin{prop}\label{prop:rued}
  Let $G$ be a finite group and $N\triangleleft G$ be a cyclic normal subgroup of prime order $p$. Then the transfer $\Ver_{G,N}:G^{\rm ab}\to N^{\rm ab}$ is surjective if and only if $G_p$ is cyclic.  
\end{prop}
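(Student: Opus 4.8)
The plan is to reduce to the case of a $p$-group, where the assertion can be analysed via the subgroup structure. Since $N$ is a normal $p$-subgroup it lies in $O_p(G)$, hence in every Sylow $p$-subgroup; fix $P:=G_p\supseteq N$. Because $N^{\rm ab}=N$ has order $p$, the map $\Ver_{G,N}$ is surjective iff it is nonzero, and since $N$ is a $p$-group it factors through the $p$-primary part $G^{\rm ab}_{(p)}$ of $G^{\rm ab}$. Transitivity of the transfer gives $\Ver_{G,N}=\Ver_{P,N}\circ\Ver_{G,P}$; and since composing $\Ver_{G,P}\colon G^{\rm ab}\to P^{\rm ab}$ with the inclusion-induced map $P^{\rm ab}\to G^{\rm ab}$ is multiplication by the prime-to-$p$ integer $[G:P]$, the restriction of $\Ver_{G,P}$ to $G^{\rm ab}_{(p)}$ is injective. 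Hence $\Ver_{G,N}$ is surjective exactly when $\Ver_{P,N}$ is nonzero on the image of that injection, and the statement reduces to the following $p$-group assertion: \emph{for a finite $p$-group $P$ and a normal subgroup $N\triangleleft P$ of order $p$ (necessarily central), $\Ver_{P,N}\colon P^{\rm ab}\to N$ is surjective iff $P$ is cyclic}. (On the ``if'' side one needs in addition that $G^{\rm ab}_{(p)}\cong G_p$; this holds, for instance, for $p=2$, since a cyclic Sylow $2$-subgroup forces a normal $2$-complement and then $[G,G]$ has odd order.)

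For the ``if'' direction of the $p$-group assertion, write $P=\langle x\rangle$ with $|P|=p^n$, so $N=\langle x^{p^{n-1}}\rangle$ and $[P:N]=p^{n-1}$. Using $\{1,x,\dots,x^{p^{n-1}-1}\}$ as a transversal of $N$ in $P$ and evaluating the transfer on $x$, every contribution is trivial except the one arising from $x\cdot x^{p^{n-1}-1}=x^{p^{n-1}}\in N$; hence $\Ver_{P,N}(x)=x^{p^{n-1}}$ generates $N$, so $\Ver_{P,N}$ is surjective.

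For the ``only if'' direction we prove the contrapositive: if $P$ is not cyclic then $\Ver_{P,N}=0$. The key device is that whenever $N\le M\le P$ with $M$ abelian and non-cyclic, one has $\Ver_{M,N}=0$: for an abelian group the transfer into a subgroup of index $m$ is the map $z\mapsto z^{m}$, and the exponent of a non-cyclic abelian $p$-group $M$ is a proper divisor of $|M|$, hence divides $[M:N]=|M|/p$, so $z^{[M:N]}=1$ for all $z\in M$; by transitivity $\Ver_{P,N}=\Ver_{M,N}\circ\Ver_{P,M}=0$. It remains to produce such an $M$. If $P$ is non-cyclic and not generalized quaternion, it contains a subgroup $A\cong(\Z/p\Z)^2$ (classical); as $N$ is central, $M:=AN$ is abelian, non-cyclic and contains $N$. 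If $P\cong Q_{2^m}$ with $m\ge 3$, then $N=Z(P)$ is its unique subgroup of order $2$; taking $C=\langle x\rangle\cong\Z/2^{m-1}\Z$ to be the index-$2$ cyclic subgroup, a direct computation gives $\Ver_{P,C}(P^{\rm ab})=N$ (a generator of $C$ transfers to $1$, and a ``reflection'' $y$ transfers to $y^{2}$, the central involution), while $\Ver_{C,N}$ restricted to $N$ is the map $n\mapsto n^{2^{m-2}}$, which is trivial since $m\ge 3$; hence $\Ver_{P,N}=\Ver_{C,N}\circ\Ver_{P,C}=0$.

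The step I expect to be the main obstacle is the ``only if'' direction for the generalized quaternion groups: these are exactly the non-cyclic $p$-groups admitting no non-cyclic abelian subgroup through $N$, so the uniform argument via $M$ fails and they must be treated by the separate transfer computation. A secondary technical point is the passage to the $p$-Sylow subgroup — one must use the identity $(\mathrm{incl})_*\circ\Ver_{G,P}=[G:P]$ rather than the composite in the other order, and, for the ``if'' direction, the identification of $G^{\rm ab}_{(p)}$ with $G_p$.
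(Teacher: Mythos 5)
The paper itself offers no proof of this proposition; it simply cites Proposition~4.8 of R\"ud's thesis, so there is no in-paper argument to compare yours against. Taken on its own terms, your ``only if'' direction is sound and complete: factoring $\Ver_{G,N}=\Ver_{P,N}\circ\Ver_{G,P}$ with $P=G_p$, noting that $N$ is automatically central in $P$, killing $\Ver_{P,N}$ through a non-cyclic abelian $M$ with $N\le M\le P$ (where the transfer is $z\mapsto z^{[M:N]}$ and the exponent of $M$ divides $[M:N]$), and treating the generalized quaternion case by an explicit two-step transfer through the index-$2$ cyclic subgroup---all of this checks out.

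The gap you flag in the ``if'' direction is genuine, and in fact it points to an error in the paper's side remark. Without assuming $N$ central, the proposition as stated is false: take $G=S_3$, $p=3$, $N=A_3$. Here $G_3=N$ is cyclic, but $G^{\rm ab}\cong\Z/2\Z$ has no nonzero homomorphism to $N\cong\Z/3\Z$, so $\Ver_{G,N}=0$ is not surjective. Thus the claim following the citation, that ``this assumption [centrality of $N$] is not needed for this proposition,'' is wrong for the ``if'' direction (it is harmless for the ``only if'' direction, and in the paper's applications only $p=2$ and a central $N=\langle\iota\rangle$ ever occur, so no downstream damage results). Your observation that one needs $G^{\rm ab}_{(p)}\cong G_p$, which you only establish for $p=2$, is exactly the missing piece, and it can be supplied uniformly once centrality is restored: any automorphism of the cyclic $p$-group $P=G_p$ of order prime to $p$ that fixes its unique order-$p$ subgroup $N$ pointwise is trivial (the kernel of $\Aut(P)\to\Aut(N)$ is a $p$-group), so $N\subseteq Z(G)$ forces $N_G(P)=C_G(P)$; Burnside's transfer theorem then yields a normal $p$-complement, whence $G^{\rm ab}_{(p)}\cong P$ and $\Ver_{G,P}$ restricted to $G^{\rm ab}_{(p)}$ is an isomorphism onto $P$. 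Composing with the surjection $\Ver_{P,N}$ finishes the ``if'' direction for all primes $p$, provided $N$ is central. You should add the centrality hypothesis and this Burnside step to make the proof complete.
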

\begin{proof}
 See Proposition 3.8 of \cite{rued:thesis}.
 Note that the author assumes that the normal subgroup $N$ is central in $G$ throughout the whole
 paper; however, this assumption is not needed for
 this proposition in his proof.
 \qed 
\end{proof}

\subsection{Connection with corestriction maps}
\label{sec:T.2}

Let $H \subset G$ be a subgroup of finite index, and let $A$ be a $G$-module.
Let
\[ f: \Ind_H^G A=\Z[G]\otimes_{\Z[H]} A \to A, \quad g\otimes a \mapsto ga \]
be the natural map of $G$-modules. Applying Galois cohomology $H^i(G, -)$ to the map $f$ and by Shapiro's lemma, we obtain  for each $i\ge 0$ a morphism
\[ \Cor: H^i(H, A) \to H^i(G,A), \]
called the \emph{corestriction} form $H$ to $G$.

Applying $H^i(G,-)$ to the short exact sequence
$0 \longrightarrow \Z  \longrightarrow \Q  \longrightarrow \Q/\Z  \longrightarrow 0$,
one obtains an isomorphism
$ H^i(G,\Z) \simeq H^{i-1}(G, \Q/\Z)$ 
for all $i\ge 1$. When $i=2$, this gives the isomorphism
\begin{equation}
  \label{eq:H2Z}
  H^2(G,\Z)\simeq \Hom(G^{\rm ab},\Q/\Z).
\end{equation}
\begin{prop} \label{prop:cor}
  Let $H\subset G$ be a subgroup of finite index. Through the isomorphism \eqref{eq:H2Z} we have the following commutative diagram
\begin{equation} \label{eq:cor-ver}
\begin{tikzcd}
  \qquad  & H^2(H,\Z) \arrow{r}{\sim} \arrow{d}{\Cor}
  & \Hom(H^{\rm ab},\Q/\Z) \arrow{d}{\Ver_{G,H}^\vee} \\  
\qquad & H^2(G,\Z) \arrow{r}{\sim} \arrow{r} & \Hom(G^{\rm ab},\Q/\Z), 
\end{tikzcd}
\end{equation}
where $\Ver_{G,H}^{\vee}$ is dual of the transfer $\Ver_{G,H}: G^{\rm ab} \to H^{\rm ab}$.
Moreover, if $H$ is normal in $G$, the composition $H^{\rm ab}\to G^{\rm ab}\to H^{\rm ab}$ is the norm $N_{G/H}$. Here $H^{\rm ab}$ is viewed as a $G$-module by conjugation and also as a $G/H$-module, since $H$ acts trivially on
$H^{\rm ab}$.    
\end{prop}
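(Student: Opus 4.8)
\emph{Overview and reduction to degree~$1$.} The plan is to reduce the commutative square \eqref{eq:cor-ver} to an identity in degree~$1$, prove that identity from the explicit cochain description of corestriction (where triviality of $\Q/\Z$ collapses everything to the product defining the transfer), and then obtain the normality assertion by directly unwinding the definition of $\Ver_{G,H}$. For the reduction, recall that $\Cor$ was defined as the composite $H^i(H,A)\isoto H^i(G,\Ind_H^G A)\xrightarrow{f_*}H^i(G,A)$ of the inverse Shapiro isomorphism with the map induced by $f\colon\Ind_H^G A\to A$, $g\otimes a\mapsto ga$. Both constituents are natural in $A$ and compatible with connecting homomorphisms: Shapiro's isomorphism because $\Ind_H^G(-)$ is exact and the isomorphism is natural, and $f_*$ because applying $\Ind_H^G(-)$ to $0\to\Z\to\Q\to\Q/\Z\to 0$ and then $f$ yields a morphism of short exact sequences onto $0\to\Z\to\Q\to\Q/\Z\to 0$ of trivial modules. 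Hence $\Cor$ intertwines the connecting maps $\delta$ underlying the isomorphism \eqref{eq:H2Z} and its analogue for $H$ (these are isomorphisms since $H^i(G,\Q)=H^i(H,\Q)=0$ for $i\ge 1$, $G$ being finite). Since moreover $H^1(G,\Q/\Z)=\Hom(G,\Q/\Z)=\Hom(G^{\rm ab},\Q/\Z)$ as $\Q/\Z$ is trivial, the square \eqref{eq:cor-ver} commutes once we show
\[
\Cor(\chi)=\chi\circ\Ver_{G,H}\qquad\text{for every }\chi\in\Hom(H^{\rm ab},\Q/\Z)=H^1(H,\Q/\Z).
\]

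\emph{The degree-$1$ identity.} Here I would invoke the explicit cochain formula for corestriction on the standard resolution (standard references such as \cite{Serre-finite-grp}): with the section $\varphi\colon X=G/H\to G$ and the elements $h^\varphi_{g,x}\in H$ from the definition of $\Ver_{G,H}$, the corestriction of a $1$-cocycle $c\in Z^1(H,A)$ is represented by a cocycle of the form $g\mapsto\sum_{x\in X}\varphi(x)\cdot c\big(h^\varphi_{g,x}\big)$, up to the harmless bookkeeping of matching left versus right coset conventions (cf.\ Lemma~\ref{eq:RC}) and possibly replacing $g$ by $g^{-1}$, which only permutes the summands. When $A=\Q/\Z$ carries the trivial action and $c=\chi$ is a homomorphism, the translation by $\varphi(x)$ disappears and $\chi$ turns the product defining the transfer into a sum, so
\[
(\Cor\chi)(g)=\sum_{x\in X}\chi\big(h^\varphi_{g,x}\big)=\chi\Big(\textstyle\prod_{x\in X}h^\varphi_{g,x}\Big)=\chi\big(\Ver_{G,H}(g)\big),
\]
which is exactly the claimed identity. (That the answer is $\chi\circ\Ver_{G,H}$ rather than, say, its inverse is also forced: both sides are additive in $\chi$ and $\Cor\circ\,\mathrm{res}$ is multiplication by $[G:H]$.)

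\emph{The normality statement.} Suppose $H\trianglelefteq G$ and evaluate $\Ver_{G,H}$ on the image of $H^{\rm ab}\to G^{\rm ab}$, i.e.\ on classes $[h]$ with $h\in H$. Normality gives $h\cdot x=x$ in $X$ for every $x$, so the defining relation $h\varphi(x)=\varphi(hx)\,h^\varphi_{h,x}$ simplifies to $h\varphi(x)=\varphi(x)\,h^\varphi_{h,x}$, whence $h^\varphi_{h,x}=\varphi(x)^{-1}h\varphi(x)\in H$. Therefore
\[
\Ver_{G,H}(h)=\prod_{x\in X}\varphi(x)^{-1}h\varphi(x)\bmod D(H)=\sum_{\bar g\in G/H}\bar g\cdot[h]\ \text{ in }H^{\rm ab},
\]
where $G$ acts on $H^{\rm ab}$ by conjugation and $H$ acts trivially; this is precisely $N_{G/H}([h])$, and the expression is visibly independent of the choice of representatives since $\Ver_{G,H}$ is. Hence the composite $H^{\rm ab}\to G^{\rm ab}\xrightarrow{\Ver_{G,H}}H^{\rm ab}$ equals $N_{G/H}$.

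\emph{Expected main obstacle.} The only genuine content is the degree-$1$ identity, and the fiddly point there is writing down the cochain-level corestriction formula with coset conventions consistent with those fixed for $\Ver_{G,H}$; once that is done, triviality of $\Q/\Z$ does the rest. A clean alternative that sidesteps the cochain formula is to pass to homology: $H^1(-,\Q/\Z)\cong\Hom(H_1(-,\Z),\Q/\Z)$ by universal coefficients, corestriction in cohomology is the $\Hom(-,\Q/\Z)$-dual of the transfer $H_1(G,\Z)\to H_1(H,\Z)$, and that transfer, under $H_1(G,\Z)=G^{\rm ab}$ and $H_1(H,\Z)=H^{\rm ab}$, is by definition the Verlagerung; but this mostly repackages the same computation, so I would present the direct cochain argument.
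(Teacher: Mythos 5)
Your proof is correct. The paper gives no argument of its own — Proposition~\ref{prop:cor} is proved there simply by pointing to \cite[Proposition 1.5.9]{NSW} — so you are supplying the omitted proof. Your route shifts $H^2(\cdot,\Z)$ to $H^1(\cdot,\Q/\Z)$ along $0\to\Z\to\Q\to\Q/\Z\to 0$, using that $\Cor$ is a morphism of $\delta$-functors (Shapiro's isomorphism and $f_*$ are both compatible with the connecting maps) and that those connecting maps are isomorphisms since $G$ is finite; you then prove the degree-one identity $\Cor(\chi)=\chi\circ\Ver_{G,H}$ by the explicit cochain formula for corestriction, where triviality of $\Q/\Z$ kills the translation factors and turns the sum into $\chi$ applied to the defining product for the transfer. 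The sanity check that $\Cor\circ\Res=[G:H]$ rules out a stray inverse, and the normal case --- extracting $h^\varphi_{h,x}=\varphi(x)^{-1}h\varphi(x)$ from the relation $h\varphi(x)=\varphi(hx)\,h^\varphi_{h,x}$ with $hx=x$, then recognizing the product as $N_{G/H}([h])$ after noting that $\bar g\mapsto\bar g^{-1}$ permutes $G/H$ --- is also handled correctly, and it does compute the composite $H^{\rm ab}\to G^{\rm ab}\xrightarrow{\Ver_{G,H}}H^{\rm ab}$ as required. The one thing to nail down in a fully written-out version is a single precise convention for the cochain-level corestriction (left vs.\ right cosets, and whether Shapiro is taken via induced or coinduced modules) together with a check that it matches the paper's relation $g\varphi(x)=\varphi(gx)\,h^\varphi_{g,x}$; you flag this yourself, and Lemma~\ref{eq:RC} shows the left/right discrepancy is harmless, so this is a matter of bookkeeping rather than a gap.
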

\begin{proof}
  See \cite[Proposition 1.5.9]{NSW}.
\end{proof} \

\subsection{Connection with class field theory}
\label{sec:T.3}

Let $k\subset K \subset L$ be three global fields such that the extension $L/k$ is Galois. 
Put $G=\Gal(L/k)$ and $H=\Gal(L/K)\subset G$.
Denote by $C_k$ and $C_K$ the idele class groups of $k$ and $K$, respectively.
The Artin map is a surjective homomorphism $\Art_{L/k}: C_k \to G^{\rm ab}$; similarly we have $\Art_{L/K}: C_K \to H^{\rm ab}$.
By class field theory we have the following commutative diagrams
\begin{equation}
  \label{eq:cft}
  \begin{CD}
    C_k @>\Art_{L/k}>> G^{\rm ab} \\
    @V{\rm func}VV @VV{\Ver_{G,H}}V \\
    C_K @>\Art_{L/K}>> H^{\rm ab} \\ 
  \end{CD} \qquad \qquad
   \begin{CD}
    C_k @>\Art_{L/k}>> G^{\rm ab} \\
    @A{N_{K/k}}AA @AA{\rm func}A \\
    C_K @>\Art_{L/K}>> H^{\rm ab}, \\ 
  \end{CD}
\end{equation}
where ${\rm func}$ denotes the natural map induced from the inclusion $\A_k^\times \embed \A_K^\times$ or $H\embed G$. When $K/k$ is Galois and let 
$L=K$,
the second diagram of \eqref{eq:cft} induces a homomorphism
\[ \Art_{K/k}: C_k/N_{K/k}(C_K) \to \Gal(K/k)^{\rm ab}, \] which is an isomorphism by class field theory. 

\subsection{Relative transfer maps}
\label{sec:T.4}

Let $H\subset N \subset G$ be two subgroups of $G$ of finite index.
Let $\wt X:= G/H$ and $X:=G/N$ with natural $G$-equivariant projections
$\wt c: G\to X$ and $c: \wt X\to X$. Let $\wt \varphi: X\to G$ be a section, which induces a section $\varphi: X\to \wt X$. For each $g\in G$ and $x\in X$, let $n^{\wt \varphi}_{g,x}$ be the unique element in $N$ such that $g \wt \varphi (x)=\wt \varphi (gx) n^{\wt \varphi}_{g,x}$.
Let $(N/H)^{\rm ab}=N/D(N)H$ denote the maximal abelian quotient of $N$ which maps $H$ to zero.  
Let $\Ver_{G,N/H}(g) \in (N/H)^{\rm ab}$ be the element defined by
\begin{equation}
  \label{eq:rt}
  \Ver_{G,N/H}(g):=\prod_{x\in X} n^{\wt \varphi}_{g,x} \mod D(N)H.
\end{equation}

\begin{prop}\label{prop:rt}
  {\rm (1)} The map $\Ver_{G,N/H}\colon G\to (N/H)^{\rm ab}$ does not depend on the choice of the section $\wt \varphi$ and it is a group homomorphism.

  {\rm (2)} One has $\Ver_{G,N/H}=\pi_H \circ \Ver$, where $\pi_H\colon N^{\rm ab} \to (N/H)^{\rm ab}$ is the morphism mod $H$. 
\end{prop}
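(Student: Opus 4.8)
The plan is to prove part (2) first, since part (1) then follows formally. I would begin by fixing a section $\wt\varphi: X\to G$ of the projection $\wt c: G\to X=G/N$ and unwinding the definition: each $n^{\wt\varphi}_{g,x}=\wt\varphi(gx)^{-1}\,g\,\wt\varphi(x)\in N$ is exactly the element appearing in the definition of the (absolute) transfer $\Ver_{G,N}$ from Section~\ref{sec:T.1}, with $N$ playing the role that $H$ plays there. Indeed, in that notation $h^\varphi_{g,x}=\varphi(gx)^{-1}g\varphi(x)$, so the two constructions use the same family of elements of $N$. Consequently the classical facts (cf.~\cite[Theorem 7.1]{Serre-finite-grp}) apply: the product $\prod_{x\in X}n^{\wt\varphi}_{g,x}$ taken modulo $D(N)$ is independent of $\wt\varphi$ and defines a homomorphism $\Ver_{G,N}: G\to N^{\rm ab}$.

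Next I would record that $D(N)H$ is the preimage in $N$ of the subgroup $D(N)H/D(N)$ of the abelian group $N^{\rm ab}=N/D(N)$, hence is a normal subgroup of $N$; thus $(N/H)^{\rm ab}=N/D(N)H$ is a genuine abelian group and $\pi_H: N^{\rm ab}\to (N/H)^{\rm ab}$ is the evident surjective group homomorphism. Now I compare the two definitions directly: $\Ver_{G,N}(g)$ is the class of $\prod_{x\in X}n^{\wt\varphi}_{g,x}$ modulo $D(N)$, while $\Ver_{G,N/H}(g)$ is the class of the very same product modulo $D(N)H$. Since both quotients are abelian (so the order of the factors is immaterial) and $\pi_H$ is a homomorphism carrying the class modulo $D(N)$ to the class modulo $D(N)H$, one gets $\Ver_{G,N/H}(g)=\pi_H\big(\Ver_{G,N}(g)\big)$ for every $g\in G$, which is part (2).

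Part (1) then drops out: the right-hand side $\pi_H\circ\Ver_{G,N}$ makes no reference to $\wt\varphi$ (by the independence recalled above), so $\Ver_{G,N/H}$ is independent of the choice of section; and being the composite of the two group homomorphisms $\Ver_{G,N}: G\to N^{\rm ab}$ and $\pi_H: N^{\rm ab}\to (N/H)^{\rm ab}$, it is itself a homomorphism. I do not expect a genuine obstacle here — the statement is essentially a bookkeeping identity. The only points that deserve a line of care are the identification of the elements $n^{\wt\varphi}_{g,x}$ with those used to build $\Ver_{G,N}$, and the verification that $D(N)H$ is normal in $N$ so that $\pi_H$ is defined; both are routine.
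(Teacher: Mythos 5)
Your proposal is correct and follows essentially the same route as the paper: prove (2) by observing that $\Ver_{G,N}(g)$ and $\Ver_{G,N/H}(g)$ are the same product $\prod_{x\in X} n^{\wt\varphi}_{g,x}$ read modulo $D(N)$ and $D(N)H$ respectively, so $\Ver_{G,N/H}=\pi_H\circ\Ver_{G,N}$, and then (1) is an immediate consequence. Your added remark that $D(N)H$ is normal in $N$ (being the preimage of a subgroup of the abelian group $N/D(N)$), so that $(N/H)^{\rm ab}$ and $\pi_H$ are well defined, is a sensible bit of care that the paper leaves implicit.
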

\begin{proof}  
  Clearly, the statement (1) follows from (2), because $\Ver$ does not depend on the choice of $\wt \varphi$ and is a group homomorphism. 
  (2) By definition
  $\Ver(g)=\prod_{x\in X} n^{\wt \varphi}_{g,x} \mod D(N)$, thus $\Ver_{G,N/H}=\pi_H \circ
  \Ver$. \qed
\end{proof}

\begin{defn}\label{def:rt}
  Let $H\subset N$ be two subgroups of $G$ of finite index. The group homomorphism $\Ver_{G,N/H}: G\to (N/H)^{\rm ab}$ defined in \eqref{eq:rt} is called the \emph{transfer} of $G$ into $(N/H)^{\rm ab}$ relative to $H$. By abuse of notation, we denote the induced map by $\Ver_{G,N/H}: G^{\rm ab} \to (N/H)^{\rm ab}$.
\end{defn}

One can check directly that the map $\Ver_{G,N/H}: G^{\rm ab} \to (N/H)^{\rm ab}$ factors through $\pi_H: G^{\rm ab} \to (G/H)^{\rm ab}$, the map modulo $H$.
We denote the induced map by 
\begin{equation}\label{eq:rV} 
  \Ver_{G/H,N/H}: (G/H)^{\rm ab} \to (N/H)^{\rm ab}.  
\end{equation}

\begin{rem}\label{rm:Hnormal}
  If $H\triangleleft G$ is a normal subgroup, then the induced map $\Ver_{G/H,N/H}$ is the transfer map from $G/H$ to $N/H$ associated to the subgroup $N/H\subset G/H$ of finite index.  
\end{rem}


\begin{lemma}\label{lm:ver-dual}
  Let $H\triangleleft \wt  N$ be two subgroups of finite index in $G$ with $H$ normal in $\wt N$ and cyclic quotient $N=\wt N/H=\< \sigma \>$ of order $n$. Fix a lift $\wt \sigma\in \wt N$ of $\sigma$. 
  
  {\rm (1)} Let $g\in G$ and $\{x_1,\dots x_r\}$ be double coset representatives of $\<g\>\backslash G/ \wt N$. Then 
\[ \Ver_{G,N}(g)=\sigma^{\sum_{i=1}^r m(g,x_i)}, \] 
where $f_i:=|\<g\> x_i H/H|$ and $0\le m(g,x_i)\le n-1$ is the unique integer such that $g^{f_i} x_iH=x_i \wt \sigma^{m(g,x_i)} H$.

  {\rm (2)} Let $\sigma^*\in N^{\vee}:=\Hom(N,\Q/\Z)$ be the element defined by $\sigma^*(\sigma)=1/n \mod \Z$,  $\Ver_{G,N}^\vee: N^\vee\to \Hom(G,\Q/\Z)$ the induced map, and $f:=\Ver_{G,N}^\vee(\sigma^*)$. Then $f(g)=\left (\sum_{i=1}^r m(g,x_i) \right )/n \mod \Z$.
\end{lemma}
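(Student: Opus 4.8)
The plan is to deduce (1) from the classical orbit formula for the transfer, carried out directly in the relative setting of Definition~\ref{def:rt}, and then to read off (2) by dualizing.

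I would start with two preliminary remarks. Since $N=\wt N/H$ is cyclic it is abelian, so $D(\wt N)\subseteq H$; hence $D(\wt N)H=H$, and the target of the relative transfer $\Ver_{G,N}$ (which, in the notation of Definition~\ref{def:rt} applied to the pair $H\subset\wt N$, is $(\wt N/H)^{\rm ab}$) is simply $\wt N/H=N$. Thus, for any section $\wt\varphi\colon G/\wt N\to G$, formula \eqref{eq:rt} computes $\Ver_{G,N}(g)$ as $\prod_{x\in G/\wt N}n^{\wt\varphi}_{g,x}$ reduced modulo $H$. Also, the exponent $m(g,x_i)$ is well defined: with $f_i$ the length of the $\langle g\rangle$-orbit of $x_i\wt N$ in $G/\wt N$ (equivalently, the least positive integer with $x_i^{-1}g^{f_i}x_i\in\wt N$), the element $x_i^{-1}g^{f_i}x_i$ lies in $\wt N$, so its class in the cyclic group $N$ of order $n$ is a unique power $\sigma^{m(g,x_i)}$, and this class is unchanged if the lift $\wt\sigma$ or the representative $x_i$ is changed, since $N$ is abelian.

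The heart of the argument is the orbit computation. Decompose $G/\wt N$ into $\langle g\rangle$-orbits,
\[
G/\wt N=\bigsqcup_{i=1}^{r}\{\,x_i\wt N,\;gx_i\wt N,\;\dots,\;g^{f_i-1}x_i\wt N\,\},
\]
and take the adapted section $\wt\varphi(g^{j}x_i\wt N):=g^{j}x_i$ for $0\le j\le f_i-1$. Then $n^{\wt\varphi}_{g,\,g^{j}x_i\wt N}=\wt\varphi(g^{j+1}x_i\wt N)^{-1}g\,\wt\varphi(g^{j}x_i\wt N)=1$ for $0\le j\le f_i-2$, while $n^{\wt\varphi}_{g,\,g^{f_i-1}x_i\wt N}=x_i^{-1}g^{f_i}x_i\in\wt N$ because $g^{f_i}x_i\wt N=x_i\wt N$. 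Hence, modulo $H$ (where the order of the factors becomes immaterial),
\[
\Ver_{G,N}(g)\;\equiv\;\prod_{i=1}^{r}x_i^{-1}g^{f_i}x_i\;\equiv\;\prod_{i=1}^{r}\sigma^{m(g,x_i)}\;=\;\sigma^{\sum_{i=1}^{r}m(g,x_i)},
\]
where the middle step uses that the defining relation $g^{f_i}x_iH=x_i\wt\sigma^{m(g,x_i)}H$ says exactly that the image of $x_i^{-1}g^{f_i}x_i$ in $\wt N/H=N$ is $\sigma^{m(g,x_i)}$. Independence of the chosen section is Proposition~\ref{prop:rt}(1), so this proves (1).

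Part (2) is then formal: by definition the dual map sends $\phi\in N^{\vee}$ to $\phi\circ\Ver_{G,N}$, regarded as an element of $\Hom(G,\Q/\Z)$ because every homomorphism $G\to\Q/\Z$ factors through $G^{\rm ab}$. Taking $\phi=\sigma^{*}$ and inserting (1),
\[
f(g)=\sigma^{*}\bigl(\Ver_{G,N}(g)\bigr)=\sigma^{*}\!\left(\sigma^{\sum_{i}m(g,x_i)}\right)=\frac{\sum_{i=1}^{r}m(g,x_i)}{n}\bmod\Z .
\]
The only delicate point is the bookkeeping in the orbit formula — arranging the section so that exactly one nontrivial factor survives per orbit, and keeping track that the $f_i$ appearing are the orbit lengths in $G/\wt N$; the reduction $D(\wt N)H=H$, the identification $(\wt N/H)^{\rm ab}=N$, and the dualization in (2) are all routine once Proposition~\ref{prop:rt} is available.
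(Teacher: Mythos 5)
Your proof is correct and follows essentially the same route as the paper: the same adapted section $\wt\varphi(g^{j}x_i\wt N)=g^{j}x_i$ so that exactly one nontrivial contribution $x_i^{-1}g^{f_i}x_i$ survives per $\langle g\rangle$-orbit, reduction modulo $H$ (using $D(\wt N)H=H$ since $N$ is abelian), and formal dualization for part (2). Note that your reading of $f_i$ as the orbit length $|\langle g\rangle x_i\wt N/\wt N|$ in $G/\wt N$ is exactly what is needed and what the paper's own proof uses when it takes $\{g^{j}x_i:\, 0\le j\le f_i-1\}$ as coset representatives of $G/\wt N$, even though the statement writes $f_i=|\langle g\rangle x_iH/H|$.
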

\begin{proof}
(1) We choose the set of  representatives $S=\{g^j x_i: i=1,\dots, r,\  j=0,\dots, f_i-1\}$ of $X=G/\wt N$, which defines a section $\wt \varphi$ of the natural projection $G\to G/\wt N$. Then the image of the element $n^{\wt \varphi}_{g, x}$ in $H$ is given by
\[ n^{\wt \varphi}_{g, x} \mod H=\begin{cases} \sigma^{m(g,x_i)} & \text{if $x=g^{f_i-1} x_i H$ for some $1\le i\le r$;} \\
1 & \text{otherwise.}  \
\end{cases} \]

(2) By definition, $f(g)=\sigma^*(\Ver_{G,N}(g))=\sigma^*(\sigma^{\sum_i m(g,x_i)})=\left (\sum_i m(g,x_i)\right )/n \mod \Z$. \qed
\end{proof}

One can show that the integer $m(g,x_i)$ is independent of the choice of double coset representative in $\<g\> x_i H$.

\begin{lemma}\label{lm:prod_ver}
  Let $G=\prod_{i=1}^r G_i$ be a product of groups $G_i$ and let $N_i\subset G_i$, for each $1\le i \le r$, be a subgroup of finite index. Put 
  \[ H_i:=G_1\times \cdots \times G_{i-1}\times \{1\} \times G_{i+1}\times \cdots \times G_r  , \quad \wt N_i:=G_1\times \cdots  \times G_{i-1}\times N_i \times G_{i+1}\times \cdots \times G_r. \] 
  Then the map 
  \begin{equation}\label{eq:prod_ver-1}
     \prod_{i=1}^r \Ver_{G, \wt N_i/H_i}: G \longrightarrow (\wt N_1/H_1)^{\rm ab}\times \cdots \times (\wt N_r/H_r)^{\rm ab}=N_1^{\rm ab} \times \cdots \times N_r^{\rm ab}
  \end{equation}
  is given by the product of the maps
  \begin{equation}\label{eq:prod_ver-2}
      \prod_{i=1}^r \Ver_{G_i, N_i}: \prod_{i=1}^r G_i \longrightarrow \prod_{i=1}^r N_i^{\rm ab}.
  \end{equation}
\end{lemma}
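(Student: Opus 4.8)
The plan is to reduce the claim to the factorization of the relative transfer through the absolute one, $\Ver_{G,\wt N_i/H_i}=\pi_{H_i}\circ\Ver_{G,\wt N_i}$ from Proposition~\ref{prop:rt}(2), together with an explicit choice of coset representatives adapted to the product decomposition. Since $\Ver$ is a group homomorphism, it suffices to check the equality of the $i$-th components, i.e. that $\Ver_{G,\wt N_i/H_i}(g_1,\dots,g_r)=\Ver_{G_i,N_i}(g_i)$ for each $i$.

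First I would note that the $i$-th projection $G=\prod_j G_j\to G_i$ induces a $G$-equivariant bijection $G/\wt N_i\isoto G_i/N_i$, so a section $\varphi_i\colon G_i/N_i\to G_i$ of the projection $G_i\to G_i/N_i$ lifts to a section $\wt\varphi\colon G/\wt N_i\to G$ by $\wt\varphi(\bar x)=(1,\dots,1,\varphi_i(x),1,\dots,1)$ with $\varphi_i(x)$ in the $i$-th slot. For $g=(g_1,\dots,g_r)$ and $x\in G_i/N_i$, a direct computation of $n^{\wt\varphi}_{g,x}=\wt\varphi(g_ix)^{-1}\,g\,\wt\varphi(x)$ gives
\[
n^{\wt\varphi}_{g,x}=(g_1,\dots,g_{i-1},\,h^{\varphi_i}_{g_i,x},\,g_{i+1},\dots,g_r)\in\wt N_i,
\]
where $h^{\varphi_i}_{g_i,x}\in N_i$ is the element occurring in the definition of $\Ver_{G_i,N_i}$ (note this lies in $\wt N_i$ precisely because its $i$-th coordinate lies in $N_i$).

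Next, recall from \eqref{eq:rt} that $\Ver_{G,\wt N_i/H_i}(g)=\prod_{x\in G_i/N_i} n^{\wt\varphi}_{g,x}$ computed in $(\wt N_i/H_i)^{\rm ab}=\wt N_i/D(\wt N_i)H_i$. Since $\wt N_i=N_i\times\prod_{j\ne i}G_j$ and $H_i=\{1\}\times\prod_{j\ne i}G_j$, one has $D(\wt N_i)H_i=D(N_i)\times\prod_{j\ne i}G_j$, so the quotient map $\wt N_i\to(\wt N_i/H_i)^{\rm ab}$ is exactly projection onto the $i$-th factor followed by passage to $N_i^{\rm ab}$. Under it, $n^{\wt\varphi}_{g,x}$ maps to $h^{\varphi_i}_{g_i,x}\bmod D(N_i)$, whence
\[
\Ver_{G,\wt N_i/H_i}(g)=\prod_{x\in G_i/N_i} h^{\varphi_i}_{g_i,x}\bmod D(N_i)=\Ver_{G_i,N_i}(g_i).
\]
Taking the product over $i=1,\dots,r$ gives the asserted identity \eqref{eq:prod_ver-1}$=$\eqref{eq:prod_ver-2}. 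As a cross-check one may instead compute $\Ver_{G,\wt N_i}$ via Lemma~\ref{lm:product} applied to the two-factor decomposition $G=G_i\times\big(\prod_{j\ne i}G_j\big)$ and $\wt N_i=N_i\times\big(\prod_{j\ne i}G_j\big)$: since the transfer of a group into itself is the canonical projection to its abelianization, this yields $\Ver_{G,\wt N_i}(g)=\big(\Ver_{G_i,N_i}(g_i),\,(g_j^{[G_i:N_i]})_{j\ne i}\big)$, and reducing modulo $H_i$ again leaves only $\Ver_{G_i,N_i}(g_i)$.

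The argument is essentially bookkeeping; there is no serious obstacle. The only points needing care are the identification $(\wt N_i/H_i)^{\rm ab}\cong N_i^{\rm ab}$ via the description of $D(\wt N_i)H_i$, and checking that the lifted section $\wt\varphi$ is genuinely compatible with $\varphi_i$ under $G/\wt N_i\cong G_i/N_i$, so that the relative transfer collapses cleanly to the factor transfer.
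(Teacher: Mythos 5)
Your proof is correct, but it takes a genuinely different (more computational) route than the paper's. The paper's proof is very short and conceptual: it invokes the factorization $\Ver_{G,\wt N_i/H_i}=\Ver_{G/H_i,\wt N_i/H_i}\circ\pi_{H_i}$ (stated in the text just before Remark~\ref{rm:Hnormal}), then applies Remark~\ref{rm:Hnormal} (which asserts that for normal $H_i$, the induced relative transfer $\Ver_{G/H_i,\wt N_i/H_i}$ is the ordinary transfer on the quotient group) to identify $\Ver_{G,\wt N_i/H_i}=\Ver_{G_i,N_i}\circ\pr_i$, and concludes since $(\pr_i)_i$ is the identity. You instead work directly from Definition~\ref{def:rt}: you choose a section $\wt\varphi\colon G/\wt N_i\to G$ adapted to the product decomposition (supported in the $i$-th slot), compute $n^{\wt\varphi}_{g,x}$ explicitly, identify $D(\wt N_i)H_i=D(N_i)\times\prod_{j\ne i}G_j$, and read off that the only surviving contribution is $\Ver_{G_i,N_i}(g_i)$. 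Your computation is sound at every step (the formula for $n^{\wt\varphi}_{g,x}$, the description of $D(\wt N_i)H_i$, and the collapse of the product). What your approach buys is that it is entirely self-contained and does not rest on Remark~\ref{rm:Hnormal}, which the paper leaves as an unproved remark; in effect you reprove that remark in the product case. What the paper's approach buys is brevity and a cleaner separation of formal facts from the application at hand. Your cross-check via Lemma~\ref{lm:product} is also a valid alternative derivation.
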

\begin{proof}
Let $\pr_i: G\to G_i$ be the $i$-th projection. Then 
\[ \Ver_{G, \wt N_i/H_i}=\Ver_{G/H_i,\wt N_i/H_i} \circ \,\pr_i =\Ver_{G_i,N_i} \circ \,\pr_i,  \]
cf. Remark~\ref{rm:Hnormal}. Thus, the map \eqref{eq:prod_ver-1} is equal to $\prod_i \Ver_{G_i, N_i} \circ \, (\pr_i)_i$. Since the map $(\pr_i)_i:G\to \prod_i G_i$ is the identity, we show that the map \eqref{eq:prod_ver-1} is equal to $\prod_i \Ver_{G_i, N_i}$. \qed
\end{proof}

\subsection{Connection with class field theory}
\label{sec:T.5}

Let $k\subset E\subset K$ be three global fields. Let $L/k$ be a finite Galois extension containing $K$ with Galois group $G=\Gal(L/k)$.
Let $H=\Gal(L/K)\subset N=\Gal(L/E)$ be subgroups of $G$. 

The Artin map produces the following isomorphisms (cf.~\eqref{eq:cft})
\[ C_k/N_{L/k}(C_L)\simeq G^{\rm ab}, \quad C_k/N_{K/k}(C_K)\simeq (G/H)^{\rm ab}, \] \[ \quad C_E/N_{L/E}(C_L)\simeq N^{\rm ab}, \quad
  C_E/N_{K/E}(C_K)\simeq (N/H)^{\rm ab}. 
\]
We obtain the following commutative diagrams
\begin{equation}
  \label{eq:cft_ver}
  \begin{CD}
    C_k/N_{L/k}(C_L) @>>> C_k/N_{K/k}(C_K)\\
    @VVV @VVV \\
   C_E/N_{L/E}(C_L)  @>>>  C_E/N_{K/E}(C_K)\ 
  \end{CD} \qquad \qquad
   \begin{CD}
    G^{\rm ab}  @>\pi_H>> (G/H)^{\rm ab} \\
    @V{\Ver}VV @VV{\Ver_{G/H,N/H}}V \\
    N^{\rm ab} @>\pi_H>> (N/H)^{\rm ab}. \\ 
  \end{CD}
\end{equation}
From this we see that $\Ver_{G/H,N/H}: (G/H)^{\rm ab} \to (N/H)^{\rm ab}$ does not depend on the choice of the Galois extension $L/k$.

\section{Cohomology groups of algebraic tori}
\label{sec:C}

\subsection{$H^1(\Lambda^1)$ and $H^1(\Lambda)$} \label{sec:C.2}
Let $K=\prod_{i=1}^r K_i$ be a commutative etale $k$-algebra and $E=\prod_{i=1}^r E_i$ a $k$-subalgebra. Let $T^{K/E,k}$ and $T^{K/E,1}$ be the $k$-tori defined in Subsection~\ref{sec:P.3}. 
Let $L/k$ be a splitting Galois field extension for $T^K$, and let $G=\Gal(L/k)$. 
Put 
\[ \Lambda:=X(T^{K/E,k})\quad \text{and}\quad \Lambda^1:=X(T^{K/E,1}), \]
which are $G$-modules. Put $H_i:=\Gal(L/K_i)$, $\wt N_i:=\Gal(L/E_i)$ and $N_i^{\rm ab}=(\wt N_i/H_i)^{\rm ab}:=\wt N_i/H_i D(\wt N_i)$ for $1\le i\le
r$. In case that $K_i$ is Galois over $E_i$, we let $N_i:=\wt N_i/H_i=\Gal(K_i/E_i)$ and then $N_i^{\rm ab}$ is the abelianization of the group $N_i$.


From the diagram \eqref{eq:diagram_tori}, we obtain the commutative diagram of $G$-modules:
\begin{equation}\label{eq:diagram_char}
\begin{tikzcd}
\qquad 0 \arrow{r} & X(T^{E}) \arrow{r}{\wh N}\arrow{d}{\wh \Delta} 
& X(T^K) \arrow{d}{\wh j_T} \arrow{r}{\wh j}& \Lambda^1 \arrow{r} \arrow[equal]{d} & 0 \\
\qquad 0 \arrow{r} & \Z \arrow{r}{\wh N} & \Lambda \arrow{r}{\wh j}
 & \Lambda^1 \arrow{r} & 0.
\end{tikzcd}
\end{equation}
We have
\begin{equation}
  \label{eq:XTK}
   X(T^K)=
    \bigoplus_{i=1}^r \Ind_{H_i}^G \Z \quad \text{and} \quad X(T^E)=\bigoplus_{i=1}^r \Ind_{\wt N_i}^G \Z
\end{equation}
and  
\begin{equation}
  \label{eq:XKE1}
  T^{K/E,1}=\prod_{i=1}^r R_{E_i/k} R^{(1)}_{K_i/E_i} {\Gm}_{,K_i} \quad \text{and} \quad  \Lambda^1=\bigoplus_{i=1}^r \Ind_{\wt N_i}^G \Lambda^1_{E_i}, 
\end{equation}
where  $\Lambda^1_{E_i}:=X(R^{(1)}_{K_i/E_i} {\Gm}_{,K_i})$.
By Shapiro's lemma, one has 
\begin{equation}
    \label{eq:HqL1}
   H^q(G,\Lambda^1)=\bigoplus_{i=1}^r H^q(G, \Ind_{\wt N_i}^G \Lambda^1_{E_i})=\bigoplus_{i=1}^r H^q(\wt N_i,\Lambda^1_{E_i}), \quad \forall\, q\ge 0. 
\end{equation}
Note that if $K_i/E_i$ is Galois, then $H^1(\wt N_i,\Lambda^1_{E_i})=H^1(N_i,\Lambda^1_{E_i})$ (cf.~Remark~\ref{rem:indepXT}).

Since the torus $R^{(1)}_{K_i/E_i} {\Gm}_{,K_i}$ is anisotropic, one has $H^0(G,\Lambda^1)=\bigoplus_{i=1}^r H^0(\wt N_i,\Lambda^1_{E_i})=0$. 
From the exact sequence of algebraic $E_i$-tori
\begin{equation}
    \label{eq:R1Ei}
   \begin{CD}
    1 @>>> R^{(1)}_{K_i/E_i} (\G_{{\rm m}, K_i})
    @>j>>  R_{K_i/E_i} (\G_{{\rm m}, K_i}) @>N_{K_i/E_i}>> \G_{{\rm m}, E_i} \to 1,  
  \end{CD}     
\end{equation} 
one has an exact sequence of $\wt N_i$-modules
\begin{equation}
    \label{eq:L1Ei}
 \begin{CD}
  0 @>>> \Z @>\wh N >>  X(R_{K_i/E_i} (\G_{{\rm m}, K_i})) @>\wh j >> \Lambda^1_{E_i} @>>> 0.    
  \end{CD}    
\end{equation}
When $K_i/E_i$ is Galois, all $E_i$-tori in \eqref{eq:R1Ei} split over $K_i$ and hence \eqref{eq:L1Ei} becomes an exact sequence of $N_i$-modules and $X(R_{K_i/E_i} (\G_{{\rm m}, K_i}))\simeq \Ind^{N_i}_1 \Z$ is an induced module.  



Taking Galois cohomology to the the lower exact sequence of \eqref{eq:diagram_char}, we have an exact sequence
\begin{equation}\label{eq:H1L1d}
\begin{CD}
0 @>>> H^1(G,\Lambda) @>>> H^1(G,\Lambda^1) @>\delta>> H^2(G,\Z). 
\end{CD}
\end{equation}

\begin{prop}\label{prop:H1L}
  Let the notation be as above.

{\rm (1)} There is a canonical isomorphism $H^1(G,\Lambda^1)\simeq \bigoplus_{i} N_{i}^{\rm ab,\vee}$.  

{\rm (2)} Under the canonical isomorphisms $H^1(G,\Lambda^1)\simeq \bigoplus_{i} N_{i}^{\rm ab,\vee}$ and $H^2(G,\Z)\simeq G^{\rm ab, \vee}$~\eqref{eq:H2Z}, the map $\delta: H^1(G, \Lambda^1)\to H^2(G, \Z)$ 
expresses as
\begin{equation}
    \label{eq:delta}
    \sum_{i=1}^{r} \Ver_{G,N_i}^{\vee}\colon \bigoplus_{i=1}^{r} N_i^{\rm ab,\vee} \to G^{\rm ab,\vee},
\end{equation}
where $\Ver_{G,N_i}:G\to N_i^{\rm ab}$ is the transfer map.
In particular, $H^1(G,\Lambda)\simeq \Ker (\sum \Ver_{G,N_i}^{\vee})$.
Furthermore, the map in \eqref{eq:delta} factors as
\begin{equation} \label{eq:dec}
  \begin{CD}
  \bigoplus_{i=1}^{r} N_i^{\rm ab,\vee} @>{(\Ver_{G/H_i, N_i}^{\vee})_{i}}>>  &
  \bigoplus_{i=1}^{r} (G/H_i)^{\rm ab,\vee} @>{\sum_{i=1}^{r}\pi_{H_i}^\vee}>> &
   \ G^{\rm ab,\vee},  
  \end{CD}  
\end{equation}
where $\pi_{H_i}: G^{\rm ab} \to (G/H_i)^{\rm ab}$ the map mod $H_i$.

\end{prop}

\begin{proof}
(1) Taking Galois cohomology of the upper exact sequence of \eqref{eq:diagram_char}, we have a long exact sequence of abelian groups:
\begin{equation} \label{eq:1strow1}
\begin{tikzcd}
\qquad H^1(G, X(T^K)) \arrow{r} & H^1(G,\Lambda^1) \arrow{r}{\delta}  
& H^2(G, X(T^E)) \arrow{r}{\wh N^2}  &  H^2(G, X(T^K)).
\end{tikzcd}  
\end{equation}
By \eqref{eq:XTK}, the first term 
$H^1(G,X(T^K))=\bigoplus_i H^1(H_i,\Z)=0.$ 
Using the relations \eqref{eq:XTK} and by Shapiro's lemma, $H^1(G,\Lambda^1)=\bigoplus_i H^1(\wt N_i, \Lambda^1_{E_i})$ and the exact sequence \eqref{eq:1strow1} becomes 
\begin{equation} \label{eq:1row}
\begin{tikzcd}
\qquad 0 \arrow{r} & \bigoplus_{i=1}^{r} H^1(\wt N_i, \Lambda^1_{E_i}) \arrow{r}{\wt \delta}  
& \bigoplus_{i=1}^{r} H^2(\wt N_i,\Z) \arrow{r}{\rm Res}  & \bigoplus_{i=1}^{r} H^2(H_i,\Z).   
\end{tikzcd}
\end{equation}

It is clear that the following sequence 
\begin{equation} \label{eq:1row2}
\begin{tikzcd}
\qquad 0 \arrow{r} &  \Hom(N_i^{\rm ab}, \Q/\Z)  \arrow{r}{\rm Inf}  
&  \Hom(\wt N_i, \Q/\Z) \arrow{r}{\rm Res}  &  \Hom(H_i,\Q/\Z)
\end{tikzcd}
\end{equation}
is exact.
Using the canonical isomorphism 
$H^2(H,\Z)\simeq \Hom(H,\Q/\Z)$ \eqref{eq:H2Z} for any group $H$, 
we rewrite \eqref{eq:1row2} as follows:
\begin{equation} \label{eq:1row3}
\begin{tikzcd}
\qquad 0 \arrow{r} &  H^2(N_i^{\rm ab}, \Z)  \arrow{r}{\rm Inf}  
&  H^2(\wt N_i, \Z) \arrow{r}{\rm Res}  &  H^2(H_i,\Z).   
\end{tikzcd}
\end{equation}
Comparing \eqref{eq:1row3} and \eqref{eq:1row}, there is a unique isomorphism
\begin{equation}
  \label{eq:H1L1}
H^1(G,\Lambda^1)=\bigoplus_{i=1}^r H^1(\wt N_i, \Lambda^1_{E_i}) \simeq \bigoplus_{i=1}^r H^2(N_i^{\rm ab},\Z)   
\end{equation}
which fits the following commutative diagram
\begin{equation} \label{eq:1row4}
\begin{tikzcd}
\qquad 0 \arrow{r} &  \bigoplus_{i=1}^{r} H^2(N_i^{\rm ab}, \Z)  \arrow{r}{\rm Inf}  \arrow[equal]{d}
&  \bigoplus_{i=1}^{r} H^2(\wt N_i, \Z) \arrow{r}{\rm Res}  \arrow[equal]{d} &  \bigoplus_{i=1}^{r} H^2(H_i,\Z) \arrow[equal]{d} \\
\qquad 0 \arrow{r} & \bigoplus_{i=1}^{r} H^1(\wt N_i, \Lambda^1_{E_i}) \arrow{r}{\wt \delta}  
& \bigoplus_{i=1}^{r} H^2(\wt N_i,\Z) \arrow{r}{\rm Res}  & \bigoplus_{i=1}^{r} H^2(H_i,\Z).   
\end{tikzcd}
\end{equation}
This shows the first statement.

(2) The map $\wh \Delta$ in \eqref{eq:diagram_char} is induced from the restriction of $X(T^E)$ to the subtorus $\Gmk$ and therefore is given by
\begin{equation}
  \label{eq:hatD}
  \wh \Delta=\sum_{i=1}^{r} \wh \Delta_i \colon \bigoplus_{i=1}^r \Ind_{\wt N_i}^G \Z\to \Z, \quad \wh \Delta_i(g\otimes n)=n, \quad \forall\, g\in G, n\in\Z. 
\end{equation}
Thus, by the definition, the induced map $\wh \Delta_i^2$ on $H^2(G,-)$ is nothing but the corestriction $\Cor \colon H^2(\wt N_i, \Z) \to H^2(G,\Z)$; see Section \ref{sec:T.2}.  

From the diagram \eqref{eq:diagram_char}, we obtain the following commutative diagram:
\begin{equation} \label{eq:fund_diagram}
\begin{tikzcd}
\qquad  & 0 \arrow{r} \arrow{d} & \bigoplus_{i=1}^{r} H^1(\wt N_i, \Lambda^1_{E_i}) \arrow{r}{\wt \delta} \arrow[equal]{d} 
& \bigoplus_{i=1}^{r} H^2(\wt N_i,\Z) \arrow{r}{\rm Res} \arrow{d}{\wh \Delta^2=\Cor} & \bigoplus_{i=1}^{r} H^2(H_i,\Z) \arrow{d}    \\
\qquad 0 \arrow{r} & H^1(G,\Lambda) \arrow{r}  &  H^1(G,\Lambda^1)  \arrow{r}{\delta} & H^2(G,\Z)  \arrow{r}{}
 & H^2(G,\Lambda).  
\end{tikzcd}  
\end{equation}

With the identification \eqref{eq:H1L1}, we have 
$\delta= \Cor \circ \wt \delta= \Cor\circ \Inf $ and 
the lower long exact sequence of \eqref{eq:fund_diagram} becomes 
\begin{equation} \label{eq:4term_es}
\begin{tikzcd}
\qquad 0 \arrow{r} & H^1(G,\Lambda) \arrow{r}  & \bigoplus_{i=1}^{r} H^2(N_i^{\rm ab}, \Z) \arrow{r}{\Cor\circ \Inf} & H^2(G,\Z)  \arrow{r}{\wh N^2}
 & H^2(G,\Lambda).  & 
\end{tikzcd}  
\end{equation}
By Propositions~\ref{prop:rt} and \ref{prop:cor},
under the isomorphism~\eqref{eq:H2Z}
the map $\Cor \circ \Inf \colon H^2(N_i^{\rm ab},\Z) \to H^2(G,\Z)$ corresponds to 
$\Ver^{\vee}_{G,N_i}: N_i^{\rm ab,\vee} \to G^{\rm ab,\vee}$, where $\Ver^{\vee}_{G,N_i}$ is the dual of the transfer $\Ver_{G, N_i}: G^{\rm ab}\to N_i^{\rm ab}$ relative to $H_i$.
This proves \eqref{eq:delta}. Then it follows from  \eqref{eq:4term_es} that $H^1(\Lambda)\simeq \Ker (\sum_{i=1}^{r} \Ver^{\vee}_{G,N_i})$. As the map $\Ver_{G, N_i}: G^{\rm ab}\to N_i^{\rm ab}$ factors as 
\[ \begin{CD}
G @>\pi_{H_i}>> (G/H_i)^{\rm ab} @>\Ver_{G/H_i, \wt N_i/H_i}>> N_i^{\rm ab},
\end{CD}
\]
the last assertion \eqref{eq:dec} follows. \qed
\end{proof}

\begin{remark}\label{rem:indep}
  In terms of class field theory, we have $C_{E_i}/N_{K_i/E_i}(C_{K_i})\simeq N_i^{\rm ab}$, $C_k/N_{K_i/k}(C_{K_i})\simeq (G/H_i)^{\rm ab}$ and $C_k/N_{L/k}(C_L)\simeq G^{\rm ab}$. Let $L_0$ be the compositum of all Galois closures of $K_i$ over $k$; this is the minimal splitting field of the algebraic torus $T^{K/E,k}$. One has $L_0\subset L$ and its Galois group $G_0:=\Gal(L_0/k)$ is a quotient of $G$. Thus, we have the following commutative diagram:
  \begin{equation}
    \label{eq:indep}
    \begin{CD}
      \frac{C_k}{N_{L/k}(C_L)} @>>> \frac{ C_k}{N_{L_0/k}(C_{L_0})} @>>> \prod_i \frac{C_k}{N_{K_i/k}(C_{K_i})} @>>>   \prod_i \frac{C_{E_i}}{N_{K_i/E_i}(C_{K_i})} \\
      @VVV @VVV @VVV @VVV \\
      G^{\rm ab} @>>> G_0^{\rm ab} @>>> \prod_{i} (G/H_i)^{\rm ab} @>>> \prod_{i} N_i^{\rm ab}. 
    \end{CD}
  \end{equation}
  Taking the Pontryagin dual, the lower row gives
  \begin{equation}
    \label{eq:indp-dual}
    \begin{CD}
      \prod_{i} N_i^{\rm ab,\vee}  @>>> \prod_{i} (G/H_i)^{\rm ab,\vee} @>>> G_0^{\rm ab,\vee}
      \subset  G^{\rm ab,\vee}.
    \end{CD}
  \end{equation}
  From this, we see that the map $\sum \Ver_{G,N_i}^{\vee}$ in \eqref{eq:delta} has image contained in $G_0^{{\rm ab},\vee}$. It follows from \eqref{eq:indep} that this map is independent of the choice of the splitting field $L$, also cf. Remark~\ref{rem:indepXT}.   
\end{remark}

\subsection{$\Sha^2(\Lambda)$}
\label{sec:C.3}

Let $\scrD$ be the set of all decomposition groups of $G$.
For any $G$-module $A$, denote by
$$H^i_\scrD(A):=\prod_{D\in \scrD} H^i_D(A), \quad H^i_D(A):=H^i(D,A)$$ and
\[ r^i_{\scrD,A}=(r^i_{D,A})_{D\in \scrD}: H^i(G,A) \to H^i_\scrD(A) \]
the restriction map to subgroups $D$ in $\scrD$.
By definition, $\Sha^i(A)= \Ker r^i_{\scrD,A}$. 
We shall write $r_D$ and $r_{\scrD}$ for $r^i_{D,A}$ and $r^i_{\scrD,A}$, respectively, if it is clear from the content.

For the remainder of this section, we assume that the extension \emph{$K_i/E_i$ is cyclic} with Galois group $N_i$ for all $i$. Consider the following commutative diagram 
\begin{equation}
  \label{eq:lgd} 
\begin{CD}
  H^1(\Lambda^1) @>{\delta}>> H^2( \Z) @>{\wh N}>> H^2(\Lambda) @>{\wh j}>> H^2(\Lambda^1) \\
  @VV{r^1_{\scrD,\Lambda^1}}V @VV{r^2_{\scrD,\Z}}V @VV{r^2_{\scrD,\Lambda}}V @VV{r^2_{\scrD,\Lambda^1}}V \\
  H^1_{\scrD}(\Lambda^1) @>\delta_\scrD>>   H^2_{\scrD}( \Z) @>{\wh N}>> H^2_{\scrD}(\Lambda) @>{\wh j}>> H^2_{\scrD}(\Lambda^1). \\ 
\end{CD}
\end{equation}
Define
\begin{equation}
  \label{eq:H2Zp}
  \begin{split}
  H^2(\Z)':&=\{x\in  H^2(\Z) : \wh N (x) \in \Sha^2(\Lambda) \} \\ 
   &=\{x\in H^2(\Z) : r^2_{\scrD,\Z} (x) \in {\rm Im} (\delta_\scrD) \} .  
  \end{split}
\end{equation}

  

\begin{prop}
\label{prop:Sha2L-1}
  Assume that $K_i/E_i$ is cyclic with Galois group $N_i$ for all $i$. Then
  $\Sha^2(\Lambda^1)=0$, 
  $\Sha^2(\Lambda)\simeq H^2(\Z)'/{\rm Im}(\delta)$ and
  \begin{equation}
    \label{eq:tauT}
\tau_k(T^{K/E,k})=\frac{\prod_{i=1}^r |N_i|}{|H^2(\Z)'|}.    
  \end{equation}
\end{prop}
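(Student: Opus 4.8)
The plan is to prove the three assertions in order: first the vanishing $\Sha^2(\Lambda^1)=0$, then the description $\Sha^2(\Lambda)\cong H^2(\Z)'/{\rm Im}(\delta)$ by a diagram chase in \eqref{eq:lgd}, and finally the Tamagawa formula \eqref{eq:tauT} by feeding these into Ono's formula \eqref{eq:ono-f-2} and counting orders with the help of Proposition~\ref{prop:H1L} and the exact sequence \eqref{eq:4term_es}.

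For $\Sha^2(\Lambda^1)=0$: by \eqref{eq:XKE1} we have $\Lambda^1=\bigoplus_i \Ind_{\wt N_i}^G \Lambda^1_{E_i}$, and since forming $\Sha^2$ commutes with finite direct sums it is enough to treat each summand. Applying Mackey's formula to the restriction of $\Ind_{\wt N_i}^G\Lambda^1_{E_i}$ to a decomposition group $D\in\scrD$ and then Shapiro's lemma identifies $\Sha^2\bigl(G,\Ind_{\wt N_i}^G\Lambda^1_{E_i}\bigr)$ with the Tate--Shafarevich group $\Sha^2(\wt N_i,\Lambda^1_{E_i})$ formed with respect to the decomposition groups of $\wt N_i=\Gal(L/E_i)$; this is the $\Sha^2$ of the character lattice of the $E_i$-torus $R^{(1)}_{K_i/E_i}(\G_{{\rm m},K_i})$, which is independent of the choice of splitting field (cf.\ Remark~\ref{rem:indepXT}). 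Taking $K_i$ itself as splitting field, the relevant group is $N_i=\Gal(K_i/E_i)$, which is cyclic; by Chebotarev there is a place of $E_i$ inert in $K_i$, so $N_i$ occurs among its own decomposition groups and restriction to that factor is the identity, forcing $\Sha^2(N_i,\Lambda^1_{E_i})=0$. (Equivalently, $\Sha^1_{E_i}(R^{(1)}_{K_i/E_i}\G_{{\rm m},K_i})=0$ is the Hasse norm principle for the cyclic extension $K_i/E_i$, and one dualizes via Poitou--Tate.)

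With this in hand the remaining statements are formal. Write the long exact cohomology sequences of the lower row $0\to\Z\xrightarrow{\wh N}\Lambda\xrightarrow{\wh j}\Lambda^1\to 0$ of \eqref{eq:diagram_char} over $G$ and over each $D\in\scrD$; by Proposition~\ref{prop:H1L}(2) the relevant part of the global one is \eqref{eq:4term_es}, so $\Ker\bigl(\wh N\colon H^2(\Z)\to H^2(\Lambda)\bigr)={\rm Im}(\delta)$ and likewise $\Ker\bigl(\wh N\colon H^2_\scrD(\Z)\to H^2_\scrD(\Lambda)\bigr)={\rm Im}(\delta_\scrD)$. Now chase \eqref{eq:lgd}: if $x\in\Sha^2(\Lambda)$ then $\wh j(x)$ lies in $\Sha^2(\Lambda^1)=0$, hence $x=\wh N(y)$ for some $y\in H^2(\Z)$; from $\wh N(r^2_{\scrD,\Z}(y))=r^2_{\scrD,\Lambda}(x)=0$ we get $r^2_{\scrD,\Z}(y)\in{\rm Im}(\delta_\scrD)$, i.e.\ $y\in H^2(\Z)'$. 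Conversely $\wh N(H^2(\Z)')\subseteq\Sha^2(\Lambda)$ since $\wh N\circ\delta_\scrD=0$. Thus $\Sha^2(\Lambda)=\wh N(H^2(\Z)')$; as ${\rm Im}(\delta)\subseteq H^2(\Z)'$ (again by naturality of $\delta$) and $\Ker\wh N={\rm Im}(\delta)$, the map $\wh N$ induces $\Sha^2(\Lambda)\cong H^2(\Z)'/{\rm Im}(\delta)$. Finally, all groups in sight are finite; from $0\to H^1(\Lambda)\to H^1(\Lambda^1)\xrightarrow{\delta}H^2(\Z)$ we get $|H^1(\Lambda)|=|H^1(\Lambda^1)|/|{\rm Im}(\delta)|$, so by Ono's formula
\[
\tau_k(T^{K/E,k})=\frac{|H^1(\Lambda)|}{|\Sha^2(\Lambda)|}=\frac{|H^1(\Lambda^1)|/|{\rm Im}(\delta)|}{|H^2(\Z)'|/|{\rm Im}(\delta)|}=\frac{|H^1(\Lambda^1)|}{|H^2(\Z)'|},
\]
and Proposition~\ref{prop:H1L}(1) together with the cyclicity of each $N_i$ gives $|H^1(\Lambda^1)|=\prod_i|N_i^{{\rm ab},\vee}|=\prod_i|N_i|$, which is \eqref{eq:tauT}.

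The only step carrying real content is the vanishing $\Sha^2(\Lambda^1)=0$, and within it the point that $\Sha^2$ of an induced module is computed by the decomposition groups of the smaller group (the Mackey/Shapiro compatibility); once one is reduced to the cyclic group $N_i$ the vanishing is immediate from the existence of an inert place (or from Hasse's norm theorem). I expect no further obstacles: everything afterwards is a diagram chase in \eqref{eq:lgd} plus bookkeeping with orders of finite abelian groups.
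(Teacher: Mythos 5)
Your proposal is correct and follows essentially the same route as the paper's proof: reduce $\Sha^2(\Lambda^1)$ via Shapiro and independence of splitting field to $\Sha^2(N_i,\Lambda^1_{E_i})$ and kill it using Chebotarev (the cyclic group $N_i$ is itself a decomposition group), then extract the $4$-term exact sequence \eqref{eq:formula-2} from the commutative ladder \eqref{eq:lgd} and count orders via Ono's formula. The only difference is one of exposition — you spell out the Mackey/Shapiro compatibility for $\Sha^2$ of induced modules and the diagram chase that the paper leaves implicit, but the skeleton of the argument is identical.
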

\begin{proof}
It is obvious that $\wh j(\Sha^2(\Lambda))\subset \Sha^2(\Lambda^1)$ and then we have a long exact sequence
\begin{equation}
  \label{eq:formula}
\begin{CD}
0 @>>> H^1(\Lambda) @>>>  H^1(\Lambda^1) @>{\delta}>> H^2( \Z)' @>{\wh N}>> \Sha^2(\Lambda) @>{\wh j}>> \Sha^2(\Lambda^1).
\end{CD}  
\end{equation}
  One has
  \[ \Sha^2(\Lambda^1)=\bigoplus_{i} \Sha^2(G,\Ind_{\wt N_i}^G \Lambda_{E_i}^1)
  =\bigoplus_{i} \Sha^2(\wt N_i, \Lambda_{E_i}^1)=\bigoplus_{i} \Sha^2(N_i, \Lambda_{E_i}^1)
  \]  
  because $\Sha^2(X(T))$ does not depend on the choice of the splitting field.
  Since $K_i/E_i$ is cyclic, by Chebotarev's
  density theorem, we have $\Sha^2(N_i, \Lambda^1_{E_i})=0$ for all $i$ and $\Sha^2(\Lambda^1)=0$.
Therefore, one gets the 4-term
  exact sequence
\begin{equation}
  \label{eq:formula-2}
\begin{CD}
0 @>>> H^1(\Lambda) @>>>  H^1(\Lambda^1) @>{\delta}>> H^2( \Z)' @>{\wh N}>> \Sha^2(\Lambda) @>{}>> 0.
\end{CD}  
\end{equation}
From this we obtain $\Sha^2(\Lambda)\simeq H^2(\Z)'/{\rm Im}(\delta)$ and
\[ \tau_k(T^{K/E,k})=\frac{|H^1(\Lambda)|}{|\Sha^2(\Lambda)|}=
  \frac{|H^1(\Lambda^1)|}{|H^2(\Z)'|}=\frac{\prod_{i=1}^r |N_i|}{|H^2(\Z)'|}. \quad \text{\qed}\]  
\end{proof}

\begin{remark}
  Ono showed \cite{ono:tamagawa} that $\tau(R^{(1)}_{K/k} \GmK)=[K:k]$ for any cyclic extension $K/k$. Since $|H^1(K/k, X(R^{(1)}_{K/k} \GmK))|=[K:k]$, it follows that $\Sha^2(K/k, X(R^{(1)}_{K/k} \GmK))=0.$ This gives an alternative proof of the first statement $\Sha^2(\Lambda^1)=0$ of Proposition~\ref{prop:Sha2L-1}. 
\end{remark}

In order to compute the groups $H^2(\Z)'$ and $\Sha^2(\Lambda)$, we describe the maps $\delta$ and $\delta_\scrD$ in the first commutative diagram of \eqref{eq:lgd}. 
Since $\Lambda^1=\oplus_i \Lambda^1_i$, where $\Lambda^1_i=X(T^{K_i/E_i,1})=\Ind^G_{\wt N_i} \Lambda^1_{E_i}$, it suffices to describe the following commutative diagram:
\begin{equation}
  \label{eq:lgd-1}
\begin{CD}
  H^1(\Lambda^1_i) @>{\delta}>> H^2( \Z) \\
  @VV{r_{D}}V @VV{r_{D}}V \\
  H^1_{D}(\Lambda^1_i) @>\delta_D>>   H^2_{D}( \Z).  \\ 
\end{CD}  
\end{equation}
Using the commutative diagram \eqref{eq:fund_diagram}, it factors as the following one:
\begin{equation}
  \label{eq:lgd-2}
\begin{CD}
   H^1(\Ind_{\wt N_i}^G \Lambda^1_{E_i}) @>{\wt \delta}>> H^2(\Ind_{\wt N_i}^G \Z)
  @>{\wh \Delta^2}>> H^2( \Z) \\
  @VV{r_{D}}V @VV{r_{D}}V @VV{r_{D}}V \\
   H^1_{D}(\Ind_{\wt N_i}^G \Lambda^1_{E_i}) @>\wt \delta_D>>   H^2_{D}(\Ind_{\wt N_i}^G \Z)@>{\wh \Delta^2_D} >>   H^2_{D}( \Z).  \\ 
\end{CD}  
\end{equation}
\begin{prop}\label{prop:rDdiagram}
  Assume that the extension $K_i/E_i$ is cyclic and both the subgroups $\wt N_i=\Gal(L/E_i)$ and $H_i=\Gal(L/K_i)$ are normal subgroups of $G$ (that is, $E_i/k$ and $K_i/k$ are Galois) for all $i$. There are natural isomorphisms $H^1(\Lambda^1)\simeq \bigoplus_iH^2(N_i,\Z)$ and $H^1_{\scrD}(\Lambda^1)=\bigoplus_{D} \bigoplus_{i}  H^2(\ol D_i,\Z)^{[G:D\wt N_i]}$, where $D_i=D\cap \wt N_i$ and $\ol D_i$ is its image in $N_i$. Under these identifications the first commutative diagram in \eqref{eq:lgd} decomposes as the following:
\begin{equation}
  \label{eq:lgd-7}
\begin{CD}
  \bigoplus_{i=1}^r H^2(N_i,\Z)  @>{\Inf}>> \bigoplus_{i=1}^r H^2({\wt N_i}, \Z)
  @>{\Cor}>> H^2( \Z) \\
  @VV{r_{\scrD}}V @VV{r_{\scrD}}V @VV{r_{\scrD}}V \\
  \bigoplus_{D} \bigoplus_{i=1}^rH^2(\ol D_i,\Z)^{[G:D\wt N_i]}  
  @>\Inf>>  \bigoplus_{D} \bigoplus_{i=1}^r H^2(D_i,\Z)^{[G:D\wt N_i]} @>{\Cor} >>   \bigoplus_{D} H^2_{D}( \Z).   
\end{CD}  
\end{equation}  
\end{prop}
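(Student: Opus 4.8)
The plan is to obtain \eqref{eq:lgd-7} by assembling ingredients already in place and checking that every identification is compatible with restriction to decomposition groups; nothing genuinely new is proved here, the work being in the bookkeeping. For the top row I would reread the proof of Proposition~\ref{prop:H1L}: since each $K_i/E_i$ is cyclic with group $N_i$ we have $N_i^{\rm ab}=N_i$, so \eqref{eq:H1L1} and the square \eqref{eq:1row4} give the canonical isomorphism $H^1(\Lambda^1)=\bigoplus_i H^1(\wt N_i,\Lambda^1_{E_i})\simeq\bigoplus_i H^2(N_i,\Z)$, realized on each summand as the connecting map $\wt\delta$ of \eqref{eq:L1Ei-2} followed by the inflation isomorphism coming from the Inflation--Restriction sequence \eqref{eq:1row3}. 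By \eqref{eq:hatD} and the sentence following it, $\wh\Delta^2$ is the corestriction $\Cor\colon H^2(\wt N_i,\Z)\to H^2(\Z)$, and \eqref{eq:4term_es} identifies $\delta$ with $\Cor\circ\Inf$; this is exactly the top row of \eqref{eq:lgd-7}, so this half costs nothing.

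For the bottom row I would use the hypothesis $\wt N_i\triangleleft G$. Then Mackey's formula \eqref{eq:M} degenerates: $D\backslash G/\wt N_i=G/D\wt N_i$, the intersection $\wt N_{i,g}=D\cap\wt N_i=:D_i$ is independent of $g$, and the twisted modules $X^g$ equal $X$; hence restricting an induced module to $D$ yields $[G:D\wt N_i]$ copies of $\Ind_{D_i}^D$ of it, and Shapiro's lemma turns \eqref{eq:lgd-2} into \eqref{eq:lgd-3} and then into \eqref{eq:lgd-4}. Applying $H^\ast(D_i,-)$ to \eqref{eq:L1Ei-2} gives the exact sequence \eqref{eq:lgd-5}; its initial segment, combined with the Inflation--Restriction sequence for the quotient $D_i\twoheadrightarrow\ol D_i=D_i/(D_i\cap H_i)$, produces the identification $H^1(D_i,\Lambda^1_{E_i})\simeq H^2(\ol D_i,\Z)$ of \eqref{eq:Dbari}. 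Summing over $D\in\scrD$ and $i$ yields the asserted $H^1_\scrD(\Lambda^1)=\bigoplus_D\bigoplus_i H^2(\ol D_i,\Z)^{[G:D\wt N_i]}$ and rewrites \eqref{eq:lgd-4} as \eqref{eq:lgd-6}, which is the bottom row of \eqref{eq:lgd-7}.

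The remaining, and main, task is to check that the vertical restriction maps $r_\scrD$ carry the top-row identifications to the bottom-row ones, so that the assembled square \eqref{eq:lgd-7} genuinely commutes and coincides with \eqref{eq:lgd-2}. For this I would invoke: naturality of Shapiro's lemma under restriction to a subgroup, so that the Shapiro identifications in both rows are $r_D$-compatible; the compatibility of restriction with connecting homomorphisms and with inflation, applied to the surjections $\wt N_i\twoheadrightarrow N_i$ and $D_i\twoheadrightarrow\ol D_i$, which is what makes the square \eqref{eq:1row4} restrict on each Mackey summand to the analogous square for $D_i$ with the correct multiplicity $[G:D\wt N_i]$; and the standard double-coset formula expressing $\mathrm{res}_D\circ\Cor_{\wt N_i}^G$ as a sum of conjugated corestrictions $\Cor_{\wt N_{i,g}}^D$, which in the normal case reduces to $[G:D\wt N_i]$ copies of $\Cor\colon H^2(D_i,\Z)\to H^2_D(\Z)$. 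Granting these well-known compatibilities, \eqref{eq:lgd-7} commutes on the nose and equals \eqref{eq:lgd-2} after the stated identifications. I expect the only delicate point to be keeping the multiplicities straight and matching, summand by summand, the two Inflation--Restriction identifications (for $\wt N_i/H_i$ and for $D_i/(D_i\cap H_i)$); the cohomological inputs themselves are entirely standard.
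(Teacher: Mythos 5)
Your proposal is correct and follows essentially the same route as the paper, which obtains \eqref{eq:lgd-7} by stringing together the diagrams \eqref{eq:lgd-2}--\eqref{eq:lgd-6}: Mackey's formula degenerates under $\wt N_i\triangleleft G$, Shapiro and \eqref{eq:L1Ei-2} give the bottom identifications, and \eqref{eq:1row3}/\eqref{eq:1row4} give the top ones. The only thing you add beyond the paper's terse ``we obtain'' is an explicit list of the naturality checks (Shapiro under restriction, restriction vs.\ inflation and connecting maps, the double-coset formula for $\mathrm{res}\circ\Cor$ in the normal case), which the paper leaves implicit; those are the right checks and they do close the argument.
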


\begin{proof}
For any element $g\in G$ and any $\wt N_i$-module $X$, let $X^g=X$ be the $g \wt N_ig^{-1}$-module defined by 
\[ h' \cdot x:= (g^{-1} h' g) x, \quad \text{for $x\in X^g$ and $h'\in g \wt N_ig^{-1}$}. \] 
Since $\wt N_i$ is normal in $G$, Mackey's formula \cite[Section 7.3 Prop.~22, p.~58]{Serre-linear} says that as $D$-modules one has
\begin{equation}
  \label{eq:M} 
  \Ind_{\wt N_i}^G X=\bigoplus_{g\in D\backslash G/\wt N_i} \Ind_{D_i}^D X^g,
\end{equation}
where $g$ runs through double coset representatives for $D\backslash G/\wt N_i$. 

By Mackay's formula \eqref{eq:M} and Shapiro's lemma, we obtain
the following commutative diagram from \eqref{eq:lgd-2}
\begin{equation}
  \label{eq:lgd-3}
\begin{CD}
   H^1(\wt N_i, \Lambda^1_{E_i}) @>{\wt \delta}>> H^2({\wt N_i}, \Z)
  @>{\Cor}>> H^2( \Z) \\
  @VV{r_{D}}V @VV{r_{D}}V @VV{r_{D}}V \\
  \bigoplus_{g} H^1(D_i, (\Lambda^{1}_{E_i})^g)
  @>\wt \delta_D>>  \bigoplus_{g} H^2(D, \Z)@>{\Cor} >>
  H^2_{D}( \Z).   
\end{CD}  
\end{equation}
We now show $(\Lambda^{1}_{E_i})^g\simeq \Lambda^{1}_{E_i}$.
Observe that $\Lambda_{E_i}=\Coker (\Z \to \Ind_{H_i}^{\wt N_i} \Z)$ and $\Ind_{H_i}^{\wt N_i} \Z=\Ind_1^{N_i} \Z$. It suffices to show that $(\Ind_1^{N_i} \Z)^g \simeq \Ind_1^{N_i} \Z$. Let $n^g=g^{-1} n g$ (for $n\in N_i$) be a $\Z$-basis of $(\Ind_1^{N_i} \Z)^g$. 
The new action is given by $h \cdot n_1^g:= h^g n_1^g=(hn_1)^g$ and one has $n_1^g n_2^g$, for $h, n_1, n_2\in N_i$. One sees that $(\Ind_1^{N_i} \Z)^g\simeq \Ind_1^{N_i} \Z$ and hence $(\Lambda^{1}_{E_i})^g\simeq \Lambda^{1}_{E_i}$.  
The bottom row of \eqref{eq:lgd-3} then expresses as
\begin{equation}
  \label{eq:lgd-4}
\begin{CD}
  H^1(D_i, \Lambda^{1}_{E_i})^{[G:D\wt N_i]}
  @>\wt \delta_D>>  H^2(D_i, \Z)^{[G:D \wt N_i]} @>{\Cor} >>   H^2_{D}( \Z).   
\end{CD}  
\end{equation}

Recall from \eqref{eq:L1Ei} we have an exact sequence of $\wt N_i$-modules
\begin{equation}
    \label{eq:L1Ei-2}
 \begin{CD}
  0 @>>> \Z @>\wh N >>  \Ind^{\wt N_i}_{H_i} \Z @>\wh j >> \Lambda^1_{E_i} @>>> 0    
  \end{CD}    
\end{equation}
on which $H_i$ acts trivially. 
Taking the Galois cohomology $H^{*}(D_i,-)$, we get an exact sequence
\begin{equation}
  \label{eq:lgd-5}
\begin{CD}
  0 @>>> H^1(D_i, \Lambda^{1}_{E_i})
  @>\wt \delta_{D_i}>>  H^2(D_i, \Z) @>\wh N >>   H^2(D_i, \Ind_{H_i}^{\wt N_i} \Z)\simeq H^2(D_i\cap H_i, \Z)^{[\wt N_i: D_i H_i]}   
\end{CD}  
\end{equation}
as one has
\[ H^2(D_i,\Ind_{H_i}^{\wt N_i} \Z)=H^2(D_i, \bigoplus_{\wt N_i/D_i H_i} \Ind_{D_i\cap H_i}^{D_i} \Z)\simeq H^2(D_i\cap H_i, \Z)^{[\wt N_i: D_i H_i]}. \]

Similar to \eqref{eq:1row3} and \eqref{eq:H1L1}, using the Inflation-Restriction exact sequence, we make the following identification $H^1(D_i, \Lambda^{1}_{E_i})=H^2(\ol D_i, \Z)$
and \eqref{eq:lgd-4} becomes
\begin{equation}
  \label{eq:lgd-6}
\begin{CD}
  H^2(\ol D_i, \Z)^{[G:D\wt N_i]}
  @>\Inf>>  H^2(D_i, \Z)^{[G:D \wt N_i]} @>{\Cor} >>   H^2_{D}( \Z).   
\end{CD}  
\end{equation}
This proves the proposition. \qed
\end{proof}

The following proposition gives a group-theoretic description of $\Sha^2(\Lambda)$.

\begin{prop}\label{prop:Sha2L}
  Let the notation and assumptions be as in Proposition~\ref{prop:rDdiagram}.
  Let
  \[ \Ver_{G, \bfN}=(\Ver_{G, N_i})_i : G^{\rm ab} \to \prod_i N_i \text{ and }
     \Ver_{D, \ol \bfD}=(\Ver_{D, \ol D_i})_i: D^{\rm ab}\to \prod_{i} \ol D_i \]  
   denote the corresponding transfer maps, respectively. Then
\begin{equation}
  \label{eq:H2Zp-1}
  H^2(G,\Z)'=\{f \in G^{{\rm ab}, \vee}: f {\big |}_{D^{\rm ab}}
  \in {\rm Im}(\Ver_{D, \ol \bfD}^\vee)\  \forall\, D\in \scrD \},  
\end{equation}
and
\begin{equation}
  \label{eq:Sha2L-gt} 
  \Sha^2(\Lambda) \simeq \frac{\{f \in G^{{\rm ab}, \vee}: f {\big |}_{D^{\rm ab}} \in {\rm Im}(\Ver_{D, \ol \bfD}^\vee) \ \forall\, D\in \scrD \}}{{\rm Im} (\Ver^{\vee}_{G,\bfN})}.  
\end{equation}
\end{prop}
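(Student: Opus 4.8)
The plan is to assemble the statement from three ingredients already in place: the four-term exact sequence of Proposition~\ref{prop:Sha2L-1}, which gives $\Sha^2(\Lambda)\simeq H^2(\Z)'/{\rm Im}(\delta)$; the decomposed commutative square \eqref{eq:lgd-7} of Proposition~\ref{prop:rDdiagram}; and the dictionary between $\Cor\circ\Inf$ and the duals of (relative) transfer maps furnished by Propositions~\ref{prop:rt} and~\ref{prop:cor}. Throughout I would use the canonical isomorphism \eqref{eq:H2Z}, $H^2(H,\Z)\simeq\Hom(H^{\rm ab},\Q/\Z)$, applied to $H=G$ and to each $H=D$ with $D\in\scrD$. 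Since $H^2(-,\Z)\simeq H^1(-,\Q/\Z)$ is natural in the group, the restriction $r^2_{D,\Z}\colon H^2(G,\Z)\to H^2(D,\Z)$ becomes restriction of characters, $f\mapsto f|_{D^{\rm ab}}$, along the natural map $D^{\rm ab}\to G^{\rm ab}$.

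The first task is to identify ${\rm Im}(\delta_\scrD)$. By Proposition~\ref{prop:rDdiagram} the local connecting map $\delta_D$ is the direct sum over $i$ of $\Cor\circ\Inf\colon H^2(\ol D_i,\Z)^{[G:D\wt N_i]}\to H^2_D(\Z)$, with $\ol D_i=D_i/(D_i\cap H_i)\subset N_i$ as in \eqref{eq:Dbari}; because every copy is carried by the same corestriction, the multiplicity $[G:D\wt N_i]$ does not affect the image. Exactly as in the proof of Proposition~\ref{prop:H1L}(2), I would combine Proposition~\ref{prop:rt}(2) --- which factors the relative transfer as $\pi_{D_i\cap H_i}\circ\Ver_{D,D_i}$ --- with Proposition~\ref{prop:cor} --- which matches $\Cor$ with $\Ver_{D,D_i}^\vee$ and $\Inf$ with $\pi_{D_i\cap H_i}^\vee$ --- to see that $\Cor\circ\Inf$ corresponds to $\Ver_{D,\ol D_i}^\vee\colon\ol D_i^\vee\to D^{\rm ab,\vee}$; here $\ol D_i$ is cyclic, being a subgroup of the cyclic group $N_i$, so $\ol D_i^{\rm ab}=\ol D_i$. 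Summing over $i$ gives ${\rm Im}(\delta_D)={\rm Im}(\Ver_{D,\ol\bfD}^\vee)$, and feeding this together with the description of $r^2_{\scrD,\Z}$ into the definition \eqref{eq:H2Zp} of $H^2(\Z)'$ produces \eqref{eq:H2Zp-1}.

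For \eqref{eq:Sha2L-gt} I would then invoke Proposition~\ref{prop:Sha2L-1} to get $\Sha^2(\Lambda)\simeq H^2(\Z)'/{\rm Im}(\delta)$, and Proposition~\ref{prop:H1L}(2) to write the global map as $\delta=\sum_i\Ver_{G,N_i}^\vee\colon\bigoplus_i N_i^{\rm ab,\vee}\to G^{\rm ab,\vee}$, whose image is $\sum_i{\rm Im}(\Ver_{G,N_i}^\vee)={\rm Im}(\Ver_{G,\bfN}^\vee)$; substituting gives the quotient in \eqref{eq:Sha2L-gt}. The one genuinely delicate point is the middle step: keeping the two levels of the inflation--restriction / Mackey decomposition in \eqref{eq:lgd-7} straight, and confirming that the arrow forced on $\ol D_i$ by $\Cor\circ\Inf$ is literally the relative transfer $\Ver_{D,\ol D_i}$ of Definition~\ref{def:rt} (relative to $D\cap H_i$) and not a twist of it. Once that identification is pinned down, the rest is formal diagram chasing.
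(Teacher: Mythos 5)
Your proposal is correct and follows essentially the same line of argument as the paper: you invoke the decomposed diagram of Proposition~\ref{prop:rDdiagram}, translate $\Cor\circ\Inf$ into the dual relative transfer $\Ver_{D,\ol D_i}^\vee$ via Propositions~\ref{prop:rt}(2) and~\ref{prop:cor}, note the multiplicities do not change the image, and then combine with Proposition~\ref{prop:Sha2L-1} and Proposition~\ref{prop:H1L}(2). The paper merely packages the same two-step factorization (transfer followed by reduction mod $H_i$, resp.\ $D_i\cap H_i$) into the explicit commutative diagram \eqref{eq:lgd-8}, which is what you describe in prose; the delicate point you flag at the end is exactly the one the paper settles by that diagram.
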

\begin{proof}
  We translate the commutative diagram \eqref{eq:lgd-7} 
  in terms of group theory. For each decomposition group $D\in \scrD$, we have the following corresponding  commutative diagram:
\begin{equation}
  \label{eq:lgd-8}
\begin{CD}
  \prod_{i=1}^r N_i  @<{\pi_{\bf H}=(\pi_{H_i})}<< \prod_{i=1}^r \wt N_i^{\rm ab} 
  @<{\Ver_{G,\bfN}=(\Ver_{G,N_i})}<< G^{\rm ab}\\
  @AA{\rm func}A @AA{\rm func}A @AA{\rm func}A \\
  \prod_{i=1}^r \ol D_i  @<{\pi_{{\bf D \cap  H}}=(\pi_{D_i\cap H_i})}<<  \prod_{i=1}^r D_i^{\rm ab}  @<{\Ver_{D,\bfD}=(\Ver_{D,D_i})}<<  D^{\rm ab}.   
\end{CD}  
\end{equation}  
 Here we ignore the multiplicity $[G:D\wt N_i]$ because we are only concerned with the image of the map $\Cor\circ \Inf$ in \eqref{eq:lgd-7} and this does not effect the result. 
Then $\Ver_{G, \bfN}: G^{\rm ab} \to \prod_i N_i$ and $\Ver_{D, \ol \bfD}: D^{\rm ab}\to \prod_{i} \ol D_i$ are the respective compositions. By Proposition~\ref{prop:rDdiagram}, the map $\delta_D$ corresponds to
$\Ver_{D,\ol \bfD}^{\vee}$. From the second description of $H^2(\Z)'$ in  \eqref{eq:H2Zp}, we obtain \eqref{eq:H2Zp-1}. 

By Propositions~\ref{prop:rDdiagram} and \ref{prop:H1L}, the map $\delta: H^1(\Lambda^1) \to H^2(\Z)'\subset H^2(\Z)$ \eqref{eq:formula-2} corresponds to $\Ver_{G, \bfN}^\vee: \prod_i N_i^\vee \to G^{{\rm ab},\vee}$.    
Thus, by Proposition~\ref{prop:Sha2L-1}, we obtain \eqref{eq:Sha2L-gt}. This proves the proposition. \qed
\end{proof}

\section{Computations of some product cases}
\label{sec:MC}
We keep the notation in the previous section. In this section we consider the case where the extensions $K_i/k$ are all Galois with Galois group $G_i=\Gal(K_i/k)$. Let $L=K_1K_2\cdots K_r$ be the compositum of all $K_i$ over $k$ with Galois group $G=\Gal(L/k)$. Assume that 
\begin{itemize}
    \item[(i)]  the canonical map monomorphism $G\to G_1\times \dots \times G_r$  is an isomorphism, and
    \item[(ii)] $K_i/E_i$ is cyclic with Galois group $N_i:=\Gal(K_i/E_i)$ for all $i$.
\end{itemize}
As before, we put $T=T^{K/E,k}$, $\Lambda:=X(T)$, $T^1=T^{K/E,1}$ and $\Lambda^1:=X(T^{1})$.

\subsection{$H^1(\Lambda)$ and $H^2(\Lambda^1)$} \label{sec:MC.1}

\begin{lemma}\label{lm:H1L_prod}
  Let $\Lambda_i:=X(T^{K_i/E_i,k})$. We have $H^1(\Lambda)\simeq \oplus_{i=1}^r H^1(\Lambda_i)$.
\end{lemma}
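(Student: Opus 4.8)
The plan is to use Proposition~\ref{prop:H1L}(2), which identifies $H^1(G,\Lambda)$ with $\Ker\left(\sum_i \Ver_{G,N_i}^\vee : \bigoplus_i N_i^{\mathrm{ab},\vee}\to G^{\mathrm{ab},\vee}\right)$, and compare it term by term with the analogous description of each $H^1(\Lambda_i)$. Since we are in the product situation $G\simeq G_1\times\cdots\times G_r$ with each $K_i/k$ Galois and each $N_i=\Gal(K_i/E_i)$ cyclic (hence in particular abelian, so $N_i^{\mathrm{ab}}=N_i$), the torus $T^{K_i/E_i,k}$ over $k$ already splits over $K_i$, and its character group $\Lambda_i$ fits into the sequence \eqref{eq:I.1} relative to the single field $K_i$. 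Applying Proposition~\ref{prop:H1L}(2) to this one-factor situation gives $H^1(\Lambda_i)\simeq \Ker\left(\Ver_{G_i,N_i}^\vee : N_i^\vee \to G_i^{\mathrm{ab},\vee}\right)$.

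First I would invoke Lemma~\ref{lm:prod_ver}: under the identification $G=\prod_i G_i$ and $\wt N_i = G_1\times\cdots\times N_i\times\cdots\times G_r$, the composite transfer $\Ver_{G,N_i}:G\to N_i$ is exactly $\Ver_{G_i,N_i}\circ\pr_i$, where $\pr_i:G\to G_i$ is the projection. Dualizing, $\Ver_{G,N_i}^\vee : N_i^\vee \to G^{\mathrm{ab},\vee}$ is the composition of $\Ver_{G_i,N_i}^\vee : N_i^\vee\to G_i^{\mathrm{ab},\vee}$ with the inclusion $\iota_i:G_i^{\mathrm{ab},\vee}\hookrightarrow G^{\mathrm{ab},\vee}=\bigoplus_j G_j^{\mathrm{ab},\vee}$ coming from $G^{\mathrm{ab}}=\prod_j G_j^{\mathrm{ab}}$. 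Thus the map $\sum_i\Ver_{G,N_i}^\vee$ on $\bigoplus_i N_i^\vee$ is the direct sum $\bigoplus_i \Ver_{G_i,N_i}^\vee$, landing in $\bigoplus_i G_i^{\mathrm{ab},\vee}=G^{\mathrm{ab},\vee}$ with distinct coordinates. Here the crucial point is that the images $\iota_i(G_i^{\mathrm{ab},\vee})$ sit in independent direct summands of $G^{\mathrm{ab},\vee}$, so there is no interaction between the $r$ constituent maps.

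Consequently the kernel of a direct sum of maps with independent targets is the direct sum of the kernels: $\Ker\left(\bigoplus_i\Ver_{G_i,N_i}^\vee\right) = \bigoplus_i \Ker\left(\Ver_{G_i,N_i}^\vee\right)$. Combining this with the two applications of Proposition~\ref{prop:H1L}(2) above yields
\[
H^1(G,\Lambda)\;\simeq\;\bigoplus_{i=1}^r \Ker\!\left(\Ver_{G_i,N_i}^\vee\right)\;\simeq\;\bigoplus_{i=1}^r H^1(\Lambda_i),
\]
as desired. I would also remark that one must check $N_i^{\mathrm{ab}}=N_i$ (immediate since $N_i$ is cyclic, hence abelian) and that $\Lambda_i=X(T^{K_i/E_i,k})$ is computed with the correct Galois group — this is handled by Remark~\ref{rem:indep}, which says $H^1$ is independent of the choice of splitting field, so we may compute over $L$ for the big torus and over $K_i$ (or $G_i$) for the factors and get compatible answers.

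The only mild obstacle is bookkeeping: one must be careful that the isomorphism $G^{\mathrm{ab}}\simeq\prod_i G_i^{\mathrm{ab}}$ is compatible, under Lemma~\ref{lm:prod_ver}, with the factorization $\Ver_{G,N_i}=\Ver_{G_i,N_i}\circ\pr_i$ — i.e., that the map into the $i$-th factor $N_i$ genuinely only sees the $i$-th component of $G^{\mathrm{ab}}$. This is exactly the content of Lemma~\ref{lm:prod_ver} (with $H_i$ the complementary product), so no new computation is needed; everything reduces to the direct-sum decomposition of kernels.
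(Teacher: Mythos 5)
Your proposal is correct and essentially identical to the paper's own proof: both invoke Proposition~\ref{prop:H1L}(2) to identify $H^1(\Lambda)$ with $\Ker(\sum_i\Ver_{G,N_i}^\vee)$, use Lemma~\ref{lm:prod_ver} to factor $\Ver_{G,N_i}$ through $\pr_i$, and then observe that the kernel of the resulting direct-sum map decomposes componentwise, with Remark~\ref{rem:indep} (or Remark~\ref{rem:indepXT}) justifying the identification of each factor with $H^1(\Lambda_i)$. You merely spell out the dualization and the independence of the target summands $G_i^{\mathrm{ab},\vee}$ more explicitly than the paper does.
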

\begin{proof}
By Proposition~\ref{prop:H1L}, $H^1(\Lambda)$ is isomorphic to the kernel of the dual of the map $\prod_i \Ver_{G,N_i} : G \to \prod_i N_i^{\rm ab}$. By Lemma~\ref{lm:prod_ver}, $\prod_i \Ver_{G,N_i}=\prod_i \Ver_{G_i, N_i}: \prod_{i=1}^r G_i \to \prod_i N_i^{\rm ab}$, 
and therefore the kernel of its dual is equal to the product of abelian groups $H^1(\Lambda_i)$ for $i=1,\dots, r$, by Proposition~\ref{prop:H1L}. This proves the lemma. \qed
\end{proof}

We remark that Condition (ii) is not needed in the proof of Lemma~\ref{lm:H1L_prod}. 

\begin{prop}\label{prop:HS5n}
  Let $H\subset G$ be a normal subgroup of a finite group $G$, let $A$ be a $G$-module, and let $n\ge 1$ be a positive integer. If $H^q(H, A)=0$ for all $0 < q < n$, then we have a 5-term long exact sequence
\begin{equation} \label{eq:HS5n}
0 \to H^n(G/H, A^H) \to H^n(G, A) \to H^n(H, A)^{G/H} \xrightarrow{d} H^{n+1}(G/H, A^H) \to H^{n+1}(G, A). 
\end{equation}
\end{prop}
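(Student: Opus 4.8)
The statement to prove is Proposition~\ref{prop:HS5n}, the ``generalized inflation-restriction'' exact sequence in degree $n$ under the vanishing hypothesis $H^q(H,A)=0$ for $0<q<n$. The natural tool here is the Lyndon--Hochschild--Serre spectral sequence
\[
E_2^{p,q}=H^p(G/H,H^q(H,A)) \Longrightarrow H^{p+q}(G,A),
\]
and I would simply read off the desired five-term sequence from the shape of $E_2$. The vanishing hypothesis forces $E_2^{p,q}=0$ for all $0<q<n$; so the only potentially nonzero rows of $E_2$ in total degree $\le n+1$ are the bottom row $q=0$ (giving $E_2^{p,0}=H^p(G/H,A^H)$) and the row $q=n$ (giving $E_2^{p,n}=H^p(G/H,H^n(H,A))$, and in particular $E_2^{0,n}=H^n(H,A)^{G/H}$ since $G/H$ acts on $H^n(H,A)$).

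\textbf{Key steps.} First I would invoke the five-term exact sequence of low-degree terms of any first-quadrant spectral sequence, but applied after an edge-shift: because the rows $1,\dots,n-1$ vanish, all differentials into or out of the entries $E_r^{p,0}$ for $p\le n$ and $E_r^{0,n}$ are forced, except the single transgression
\[
d_{n+1}\colon E_{n+1}^{0,n}\longrightarrow E_{n+1}^{n+1,0}.
\]
Concretely: for $p\le n$ the group $E_2^{p,0}$ supports no nonzero differentials (the targets $E_r^{p+r,1-r}$ vanish since they lie in a killed row or off the quadrant) and receives none (sources $E_r^{p-r,r-1}$ vanish for $2\le r\le n$ and for $r=n+1$ we get $E_{n+1}^{p-n-1,n}$ which is off-quadrant when $p\le n$), so $E_2^{p,0}=E_\infty^{p,0}$ for $p\le n$. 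Similarly $E_{n+1}^{0,n}=E_2^{0,n}=H^n(H,A)^{G/H}$ survives to the $(n+1)$st page, where its only possible differential is $d_{n+1}$ landing in $E_{n+1}^{n+1,0}$; and $E_2^{n+1,0}=H^{n+1}(G/H,A^H)$ receives only $d_{n+1}$ and supports no outgoing differential of relevance. Then the filtration on $H^n(G,A)$ has only two nonzero graded pieces, $E_\infty^{n,0}$ and $E_\infty^{0,n}$, giving the exact sequence
\[
0\to E_\infty^{n,0}\to H^n(G,A)\to E_\infty^{0,n}\to 0,
\]
while $E_\infty^{n,0}=H^n(G/H,A^H)$ and $E_\infty^{0,n}=\ker\bigl(d_{n+1}\colon H^n(H,A)^{G/H}\to H^{n+1}(G/H,A^H)\bigr)$. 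Splicing this with the exact sequence $0\to E_\infty^{n+1,0}\to H^{n+1}(G,A)$ and the fact that $E_\infty^{n+1,0}=\operatorname{coker}(d_{n+1})\hookrightarrow H^{n+1}(G,A)$ produces exactly \eqref{eq:HS5n}, with $d=d_{n+1}$ the transgression.

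\textbf{Expected obstacle.} There is no deep obstacle; the only thing requiring care is the bookkeeping of which differentials vanish, i.e. verifying that in the relevant range the spectral sequence degenerates enough that $E_2^{p,0}=E_\infty^{p,0}$ for $p\le n$ and that the single surviving differential out of the $q=n$ row into the bottom row is precisely $d_{n+1}$. Alternatively, and perhaps more cleanly for a self-contained write-up, I would give a dimension-shifting argument: embed $A$ into a co-induced (hence $H$-cohomologically trivial in positive degrees, and $G$-acyclic) module $A\hookrightarrow A^*$ with quotient $B$, so that $H^q(H,B)\cong H^{q+1}(H,A)$ for $q\ge 1$ and $H^q(G,B)\cong H^{q+1}(G,A)$ for $q\ge 1$; an easy induction on $n$ (the base case $n=1$ being the classical inflation-restriction five-term sequence, already used in the paper, e.g. around \eqref{eq:1row3}) then reduces degree $n$ for $A$ to degree $n-1$ for $B$, after checking that $(A^*)^H$ is still suitably acyclic over $G/H$ so that the low-degree terms match up. Either route is routine; I would present the spectral sequence argument as the main line and remark that the hypothesis $H^q(H,A)=0$ for $0<q<n$ is exactly what makes the edge maps behave as in the classical $n=1$ case.
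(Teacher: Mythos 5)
Your argument is correct and is essentially the paper's proof: both invoke the Lyndon–Hochschild–Serre spectral sequence $E_2^{p,q}=H^p(G/H,H^q(H,A))\Rightarrow H^{p+q}(G,A)$, use the vanishing hypothesis to kill the rows $0<q<n$, and read off the shifted five-term sequence. The paper simply cites a general result (\cite[Proposition 2.1.3]{NSW}) for this degeneration instead of tracing the differentials by hand as you do, and it does not include your dimension-shifting alternative.
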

\begin{proof}
This follows from the Hochschild-Serre spectral sequence
\[ E_2^{p, q}:=H^p(G/H, H^q(H, A^H)) \implies H^{p+q}(G,A) \]
(cf.~\cite[Theorem 2.1.5, p.~82]{NSW}) and \cite[Proposition 2.1.3, p.~81]{NSW}. \qed

\end{proof}

\begin{prop}\label{prop:H2L1-prod}
  Let $K=\prod_{i=1}^r K_i$ and $E=\prod_{i=1}^r E_i$ be as before. Suppose that each $K_i/k$ is Galois with group $G_i$, and that Conditions (i) and (ii) are satisfied. 
  Then $H^2(G, \Lambda^1)\simeq \oplus_{i=1}^r H^2(\wt N_i, \Lambda_{E_i}^1)$ and there is a natural homomorphism 
  $v_i: N_i \to (H_i^{\rm ab})^{|N_i|-1}$ such that 
  $H^2(\wt N_i, \Lambda_{E_i}^1)\simeq \Ker(v_i^\vee)$. 
If $r=1$, that is $K/k$ is a Galois field extension, then $H^2(\Lambda^1)=0$.
\end{prop}

\begin{proof}
By \eqref{eq:HqL1}, we have 
\begin{equation}
    \label{eq:HqL1-1}
   H^q(G,\Lambda^1)=\bigoplus_{i=1}^r H^q(\wt N_i,\Lambda^1_{E_i}), \quad \forall\, q\ge 0. 
\end{equation}
By \eqref{eq:L1Ei-2}, we get the long exact sequence
\begin{equation}
    \label{eq:HqNiL1}
  \begin{CD}
    H^q(N_i, \Ind_{H_i}^{\wt N_i} \Z) @>>> 
    H^q(N_i, \Lambda^1_{E_i}) @>>>
    H^{q+1}(N_i,  \Z) @>>>
    H^{q+1}(N_i, \Ind_{H_i}^{\wt N_i} \Z). 
  \end{CD}
\end{equation}
Since $\Ind_{H_i}^{\wt N_i} \Z=\Ind_1^{N_i} \Z$  is an induced module, 
it follows from \eqref{eq:HqNiL1} that 
\begin{equation}\label{eq:H2L1Ei}
H^2(N_i, \Lambda^1_{E_i})\isoto H^3(N_i, \Z)=H^1(N_i, \Z)=0.     
\end{equation}
Since the algebraic torus $T^{K_i}$ splits over $K_i$, the $H_i$-module $\Lambda^1_{E_i}\simeq \Z^{{|N_i|-1}}$ is trivial and $H^1(H_i, \Lambda^1_{E_i})=H^1(H_i, \Z^{{|N_i|-1}})=0$.
By Proposition~\ref{prop:HS5n} with $(G,H)=(\wt N_i, H_i)$, 
we have the exact sequence
\begin{equation}
    \begin{CD}
    0   @>>> H^2(\wt N_i, \Lambda^1_{E_i}) @>>> H^2(H_i, \Lambda^1_{E_i})^{N_i} @>{d_i}>> H^3(N_i, \Lambda^1_{E_i}).
    \end{CD}
\end{equation}
Using the same argument as \eqref{eq:H2L1Ei}, we get $H^3(N_i,\Lambda^1_{E_i})\simeq \Hom(N_i,\Q/\Z)$.
On the other hand, 
\begin{equation}\label{eq:H2HiL1Ei}
  H^2(H_i, \Lambda^1_{E_i})\simeq H^2(H_i, \Z)^{{|N_i|-1}}\simeq \Hom(H_i,\Q/\Z)^{{|N_i|-1}}.   
\end{equation}

The group $N_i=\wt N_i/H$ acts on  $\Hom(H_i,\Q/\Z)^{{|N_i|-1}}$ by conjugation and the action is trivial. 
Thus, we obtain an exact sequence
\begin{equation}\label{eq:di}
   \begin{CD}
    0   @>>> H^2(\wt N_i, \Lambda^1_{E_i}) @>>> \Hom((H_i^{\rm ab})^{{|N_i|-1}}, \Q/\Z) @>d_i>> \Hom(N_i, \Q/\Z).
    \end{CD}    
\end{equation}
Let $v_i:N_i \to (H_i^{\rm ab})^{{|N_i|-1}}$ be the Pontryagin dual of $d_i$ and then $H^2(\wt N_i, \Lambda^1_{E_i})=\Ker (v_i^\vee)$. 
\qed 

\end{proof}

\begin{cor}\label{cor:H2L1-prod}
  Let the notation and assumptions be as in Proposition~\ref{prop:H2L1-prod}. Assume further that the orders of groups $G_i$ are mutually relatively prime.  Then 
 \begin{equation}\label{eq:H2L1-abel}
    H^2(\Lambda^1)\simeq \bigoplus_{i=1}^r \Hom(H_i, \Q/\Z)^{[K_i:E_i]-1}. 
  \end{equation}
(In our notation the group $H_1=\{1\}$ if $r=1$.)
\end{cor}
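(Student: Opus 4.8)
The plan is to deduce Corollary~\ref{cor:H2L1-prod} from Proposition~\ref{prop:H2L1-prod} by showing that the extra coprimality hypothesis forces each connecting map $d_i$ of \eqref{eq:di} to vanish. Once that is known, $H^2(\wt N_i,\Lambda^1_{E_i})=\Ker d_i=\Hom(H_i,\Q/\Z)^{[K_i:E_i]-1}$, and \eqref{eq:H2L1-abel} follows immediately by summing over $i$ through the isomorphism $H^2(\Lambda^1)\simeq\bigoplus_{i=1}^r H^2(\wt N_i,\Lambda^1_{E_i})$ of Proposition~\ref{prop:H2L1-prod}.

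The vanishing of $d_i$ is a one-line order argument. The source $\Hom(H_i,\Q/\Z)^{a_i}$, with $a_i=[K_i:E_i]-1$, is killed by $|H_i|=\prod_{j\ne i}|G_j|$; the target $\Hom(N_i,\Q/\Z)$ is killed by $|N_i|$, which divides $|G_i|$. The image of $d_i$ is at once a quotient of the source and a subgroup of the target, so it is killed by $\gcd\!\big(\prod_{j\ne i}|G_j|,\,|G_i|\big)$, and this gcd is $1$ because the $|G_i|$ are pairwise coprime. Hence $d_i=0$. When $r=1$ this specializes to $H^2(\Lambda^1)=0$, since $H_1=\{1\}$, in agreement with Proposition~\ref{prop:H2L1-prod}.

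I do not anticipate a genuine obstacle here; the step most deserving of care — rather than an obstruction — is that the exponent bounds above rest on the identification of the source and target of $d_i$ with $\Hom(H_i,\Q/\Z)^{a_i}$ and $\Hom(N_i,\Q/\Z)$ and on the functoriality of the isomorphisms \eqref{eq:H2HiL1Ei}, both of which are exactly what is set up in the paragraph preceding \eqref{eq:di}. If one prefers to avoid invoking \eqref{eq:di} directly, an equivalent route is to observe that coprimality makes the Hochschild--Serre spectral sequence for $(\wt N_i,H_i)$ degenerate: every differential leaving an $H^q(H_i,-)$-term lands in a subquotient built from $H^{*}(N_i,-)$ and is annihilated by $\gcd(|H_i|,|N_i|)=1$, which both recovers the relevant part of Proposition~\ref{prop:H2L1-prod} and gives the corollary in one stroke.
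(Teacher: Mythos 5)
Your proof is correct and takes essentially the same approach as the paper, which observes in one line that the coprimality of the $|G_i|$ forces $\gcd(|H_i|,|N_i|)=1$ and hence $d_i=0$. Your write-up simply makes explicit the elementary observation that a homomorphism between abelian groups annihilated by coprime integers must vanish.
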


\begin{proof}
In this case, the maps $d_i$'s are all zero. \qed
\end{proof}

\begin{rem} It is curious to know what the natural homomorphism $v_i:N_i \to (H_i^{\rm ab})^{[K_i/E_i]-1}$ is. 
\end{rem}

\subsection{$\tau(T)$}\label{subsec:tauT}
In this subsection we shall further assume that each subgroup $N_i$, besides being cyclic, is also normal in $G_i$. This assumption simplifies the description of the group $H^2(\Z)'$ through Mackey's formula. 
For each
$1 \le i \le r$ 
let $T_i:=T^{K_i/E_i, k}$, and let $\Lambda_i:=X(T_i)$ be the character group of $T_i$. Let $\scrD_i$ be the decomposition subgroups of $G_i$ for each $i$.

\begin{lemma}\label{lm:(a)(b)}
{\rm (1)} The inclusion $\prod_{i=1}^r H^2(G_i,\Z)'\subset H^2(\Z)'$ holds. 

{\rm (2)} Assume that for any $1\le i \le r$ and $D_i'\in \scrD_i$, there exists a member $D\in \scrD$ such that $\pr_i(D)=D'_i$ and a section $s_i: D'_i\to D$ of $\pr_i:D\twoheadrightarrow D_i'$ such that 
the composition $\pr_k \circ s_i\colon D_i'\to D \to G_{k}$ is trivial for $k\neq i$. Then $\prod_{i=1}^r H^2(G_i,\Z)'= H^2(\Z)'$.
\end{lemma}
\begin{proof}
(1) For each $f=(f_1, \dots, f_r)\in \Hom(G, \Q/\Z)=\prod_{i=1}^r \Hom(G_i, \Q/\Z)$,
consider the following two conditions:


\begin{enumerate}
    \item[\rm (a)] For each $D\in \scrD$, the restriction $f|_{D_{}^{}}$ of $f$ to $D^{}$ lies in the image of the map $\sum_{i=1}^r \Ver_{D,\ol D_i}^\vee$.
    
    \item[\rm (b)] For each $1 \le i\le r$ and $D'_i\in \scrD_i$, 
    the restriction $f_i|_{D_{i}'^{}}$ to $D_{i}'^{}$ lies in the image of the map 
    $\Ver_{D_{i}', D_{i}' \cap N_i}^\vee$.
\end{enumerate}
It suffices to show that the condition (b) implies (a).

Let $D=D_\grP\in \scrD$ be a decomposition group associated to a prime $\grP$ of $L$.
By definition $D_i=D\cap \wt N_i$ and $\ol D_i=D_i/D_i \cap H_i$. The projection map $\pr_i: G\to G_i$ sends each element $\sigma$ to its restriction $\sigma|_{K_i}$ to $K_i$. Let $\grP_i$ denote the prime of $K_i$ below $\grP$ and we have $\pr_i(D_{\grP})= D_{\grP_i}$.  
We show that $\pr_i(D_\grP \cap \wt N_i)=D_{\grP_i} \cap N_i$. The inclusion $\subseteq$ is obvious because $\pr_i(D_\grP \cap \wt N_i)\subset \pr_i(D_{\grP_i}) \cap \pr_i(\wt N_i)=D_{\grP_i} \cap N_i$. For the other inclusion, since $\pr_i: D_\grP\to D_{\grP_i}$ is surjective, for each $y\in D_{\grP_i} \cap N_i$ there exists an element $x=(x_i)\in D_\grP$ such that $x_i=y$. Since $x_i\in N_i$, $x$ also lies in $\wt N_i$. This proves the other inclusion. Therefore, $\ol D_i=D_{\grP_i} \cap N_i$. 

The map $\sum_{i=1}^r \Ver_{D,\ol D_i}^\vee$ is dual to the map
\[ \prod_{i=1}^r \Ver_{D, D_{\grP_i} \cap N_i}: D \to \prod_{i=1}^r (D_{\grP_i} \cap N_i). \]
Since each $\Ver_{D, D_{\grP_i} \cap N_i}= \Ver_{D_{\grP_i}, D_{\grP_i} \cap N_i}\circ \, \pr_i$ (cf.~\eqref{eq:rV}), 
the above map factorizes into the following composition
\[ 
\begin{CD}
D @>(\pr_i)_i>> \prod_{i=1}^r D_{\grP_i} @>
\prod_{i} {\Ver_{D_{\grP_i}, D_{\grP_i} \cap N_i}}>> \prod_{i=1}^r (D_{\grP_i} \cap N_i).
\end{CD}
\]
Consider the restriction $f|_{\prod_i D_{\grP_i}}=(f_i |_{D_{\grP_i}})$ of $f$ to $\prod_i D_{\grP_i}$, and put $D_i':=D_{\grP_i}$. 
By condition (b), there exists an element $h_i\in \Hom(D_{\grP_i} \cap N_i, \Q/\Z)$ such that $f_i |_{D_{\grP_i}} = \Ver^\vee_{D_{\grP_i}, D_{\grP_i} \cap N_i} (h_i)$. Then $f|_{\prod_i D_{\grP_i}}=\left (\Ver^\vee_{D_{\grP_i}, D_{\grP_i} \cap N_i} (h_i) \right )$. 
Restrict this function to $D$, we obtain $f|_D=\sum_{i=1}^r \Ver_{D,\ol D_i}^\vee (h_i)$. This shows that $f|_D$ lies in the image of $\sum_{i=1}^r \Ver_{D,\ol D_i}^\vee$. 

(2) By (1), we have $\prod_{i=1}^r H^2(G_i,\Z)'\subset H^2(\Z)'$. To prove the other inclusion, we must show that each $f_i$ satisfies condition (b)  provided $f$ satisfies condition (a), which we assume from now on. For $1\leq i\leq r$, let $D\in \scrD$ be a decomposition group of $G$ over $D_i'$ and $s_i:D_i'\to D$ be a section such that $\pr_k \circ s_i$ is trivial for $k\neq i$. Then (a) implies
\[ f|_D=\sum_{k=1}^r \Ver_{D,\ol D_k}^\vee (h_k) \] for some $h_k\in \Hom(\ol D_k, \Q/\Z)$. Pulling back to $D_i'$ via $s_i$, we have 
\[ f_i|_{D_i'}=s_i^* f|_D=\sum_{k=1}^r s_i^* \Ver_{D,\ol D_k}^\vee (h_k)=\sum_{k=1}^r s_i^* \pr_k^* \Ver_{D'_k,\ol D_k}^\vee (h_k), \]
where $D'_{k}:=\pr_{k}(D)$ if $k\neq i$. Since $s_i^*\pr_k^*=1$ if $k=i$ and  $s_i^*\pr_k^*=0$ otherwise, we get $f_i|_{D_i'}=\Ver_{D'_i,\ol D_i}^\vee (h_i)$. \qed

\end{proof}

\begin{prop}\label{prop:product-cyclic-dec}
With the notation and assumptions be as above. Suppose further that for each $1\le i\le r$, every decomposition group of $G_i$ is cyclic. Then we have $\tau(T)=\prod_{i=1}^r \tau(T_i)$.
\end{prop}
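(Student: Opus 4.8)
The plan is to deduce the multiplicativity of $\tau$ from the single identity $|H^2(\Z)'|=\prod_{i=1}^r|H^2(G_i,\Z)'|$, combined with Proposition~\ref{prop:Sha2L-1} and its instance \eqref{eq:tauTi}, which give $\tau(T)=\prod_{i=1}^r|N_i|\,/\,|H^2(\Z)'|$ and $\tau(T_i)=|N_i|/|H^2(G_i,\Z)'|$. The inclusion $\prod_{i=1}^r H^2(G_i,\Z)'\subseteq H^2(\Z)'$ is already available from Lemma~\ref{lm:(b)to(a)}, so the entire task reduces to the reverse inclusion, i.e.\ to checking the combinatorial hypothesis of Lemma~\ref{lm:lift_criteria}: for each $i$ and each decomposition subgroup $D_i'\in\scrD_i$ of $G_i$, I must produce a decomposition subgroup $D\in\scrD$ of $G$ with $\pr_i(D)=D_i'$ and a section $s_i\colon D_i'\to D$ of $\pr_i|_D$ whose composite with $\pr_k$ is trivial for every $k\neq i$.

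This is the step where the hypothesis ``every decomposition group of $G_i$ is cyclic'' is used in an essential way. Fix $i$ and $D_i'\in\scrD_i$; by assumption $D_i'=\langle\tau_i\rangle$ is cyclic. Using the identification $G\cong G_1\times\cdots\times G_r$, set
\[
C_i:=\{1\}\times\cdots\times\{1\}\times D_i'\times\{1\}\times\cdots\times\{1\}\ \subseteq\ G,
\]
a cyclic subgroup generated by $\widetilde{\tau}_i:=(1,\dots,1,\tau_i,1,\dots,1)$. I would then invoke Chebotarev's density theorem in the form: for any $g\in G=\Gal(L/k)$ there are (infinitely many) primes $\grP$ of $L$, unramified over $k$, with $\mathrm{Frob}_{\grP}=g$, and for such a $\grP$ the decomposition group $D_{\grP}$ equals $\langle g\rangle$; hence every cyclic subgroup of $G$ is a decomposition subgroup of $L/k$. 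Applied to $g=\widetilde{\tau}_i$ this shows $C_i$ is a decomposition subgroup, so $C_i$ (or, if $\scrD$ is taken up to conjugacy, a $G$-conjugate of it, which is harmless throughout) may serve as the required $D\in\scrD$. Now $\pr_i(C_i)=D_i'$ and $\pr_k(C_i)=\{1\}$ for $k\neq i$, and the canonical isomorphism $s_i\colon D_i'\xrightarrow{\ \sim\ }C_i$, $\tau\mapsto(1,\dots,1,\tau,1,\dots,1)$, is a section of $\pr_i|_{C_i}$ with $\pr_k\circ s_i$ trivial for $k\neq i$. This is precisely the hypothesis of Lemma~\ref{lm:lift_criteria}.

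With that in hand, Lemma~\ref{lm:lift_criteria} yields $H^2(\Z)'=\prod_{i=1}^r H^2(G_i,\Z)'$, hence $|H^2(\Z)'|=\prod_{i=1}^r|H^2(G_i,\Z)'|$. Combining with \eqref{eq:tauTi},
\[
\tau(T)=\frac{\prod_{i=1}^r|N_i|}{|H^2(\Z)'|}=\prod_{i=1}^r\frac{|N_i|}{|H^2(G_i,\Z)'|}=\prod_{i=1}^r\tau(T_i),
\]
as claimed. The only substantive step is the construction of $D$ and $s_i$; the rest is bookkeeping with Lemma~\ref{lm:(b)to(a)}, Lemma~\ref{lm:lift_criteria} and Proposition~\ref{prop:Sha2L-1}. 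I do not expect a serious obstacle, but the points to be careful about are the passage between a decomposition subgroup and its $G$-conjugates (so that ``$C_i\in\scrD$'' is legitimate), and the observation that the cyclicity must be imposed on the decomposition groups of \emph{each $G_i$}, not merely of $G$: it is exactly this that forces the coordinate subgroup $C_i$ to be cyclic and therefore realizable via Chebotarev.
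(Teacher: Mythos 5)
Your proof is correct and follows essentially the same route as the paper: reduce to the hypothesis of Lemma~\ref{lm:lift_criteria}, realize the coordinate cyclic subgroup $C_i=\langle(1,\dots,1,\tau_i,1,\dots,1)\rangle$ as a decomposition group via Chebotarev, take the obvious section $s_i$, and conclude with \eqref{eq:tauTi}. The paper phrases the Chebotarev step merely as ``Clearly $D$ is the decomposition group of some prime,'' so your explicit justification (and the remark about replacing $C_i$ by a $G$-conjugate when $\scrD$ is taken up to conjugacy) is just a spelled-out version of the same argument.
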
 
\begin{proof}
We shall show that the assumption in Lemma~\ref{lm:(a)(b)} (2) holds. Let $D'_i\in \scrD_i$ be a decomposition group of $G_i$; by our assumption $D'_i$ is cyclic and let $a_i$ be a generator.
Let $D$ be a cyclic subgroup of $G$ generated by $\tilde a_i=(1, \dots, 1,a_i, 1,\dots, 1)$ with $a_i$ at the $i$-th place.  
Clearly $D$ is the decomposition group of some prime and let $s_i:D_i' \to D$ be the section sending $a_i$ to $\tilde a_i$. Clearly for $k\neq i$, $\pr_k \circ s_i:D_i' \to D_k'$ is trivial. By Lemma~\ref{lm:(a)(b)}, we have $H^2(G,\Z)'=\prod_{i=1}^r H^2(G_i,\Z)'$. 

By Proposition~\ref{prop:Sha2L-1}, we have
\begin{equation}\label{eq:tauTi}
    \tau(T)=\frac{\prod_{i=1}^r |N_i|}{|H^2(\Z)'|} \quad \text{and} \quad \tau(T_i)=\frac{|N_i|}{|H^2(G_i,\Z)'|}.
\end{equation}
Therefore, $\tau(T)=\prod_{i=1}^r \tau(T_i)$. \qed
\end{proof}

\section{Galois CM fields}
\label{sec:SG} 
We return to the case of CM tori and keep the notation of Section~\ref{sec:P.3}.
In this section, we assume that $K$ is a Galois CM field. Let $G=\Gal(K/\Q)$, $G^+:=\Gal(K^+/\Q)$ and $g:=[K^+:\Q]$. 
Then one has the short exact sequence
\[1 \rightarrow \langle\iota\rangle \rightarrow G \rightarrow G^{+} \rightarrow 1. \tag{$*$}\]
Let $T=T^{K,\Q}$ be the associated CM torus over $\Q$.


\subsection{Cyclotomic extensions}

\begin{prop} 
Let $K=\mathbb{Q}(\zeta_{n})\neq \Q$ be the $n$th cyclotomic field with $n$ odd or $4|n$, and $T$ the associated CM torus over $\Q$. 

\begin{enumerate} 
    \item[$(1)$] If $n$ is either a power of an odd prime $p$ or $n=4$, then $\tau(T) = 1.$
    \item[$(2)$] In other cases, $\tau(T) = 2$.
\end{enumerate}
\end{prop}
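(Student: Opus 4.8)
The plan is to reduce everything to Proposition~\ref{formula}(1), which for a field $K$ (so $r=1$) reads $\tau(T)=2/n_K$, and then to pin down $n_K\in\{1,2\}$ in each case using the divisibility $n_K\mid\prod_{p\in S_{K/K^+}}e_{T,p}$ of Proposition~\ref{formula}(2) together with Lemma~\ref{lm:E}; for part~(1) it is quicker to bypass $n_K$ entirely via the cyclicity criterion of Proposition~\ref{LiRu}(3). Throughout I would use the standing hypothesis that $n>1$ is odd or $4\mid n$ (so $n\ge 3$ and $K\neq\Q$) to organize the cases.

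For part~(1): when $n=p^m$ with $p$ an odd prime, or $n=4$, the group $\Gal(K/\Q)\simeq(\Z/n\Z)^\times$ is cyclic, so Proposition~\ref{LiRu}(3) gives $\tau(T)=1$ immediately. (For $n=4$ one may alternatively observe $K^+=\Q$, whence $T=R_{K/\Q}\GmK$ and $\tau(T)=\tau_K(\GmK)=1$.)

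For part~(2) I would split into two subcases via Lemma~\ref{lm:ramK/K+}. First, if $n$ is not a prime power -- this covers $n$ odd and composite, and $n=2^a m$ with $a\ge2$ and $m>1$ odd -- then Lemma~\ref{lm:ramK/K+}(1) says $K/K^+$ is unramified at every finite place, i.e.\ $S_{K/K^+}=\emptyset$, so Proposition~\ref{formula}(2) forces $n_K\mid 1$, hence $n_K=1$ and $\tau(T)=2$. The remaining case is $n=2^m$ with $m\ge3$: here $S_{K/K^+}=\{2\}$ by Lemma~\ref{lm:ramK/K+}(2), so the divisibility only yields $n_K\mid e_{T,2}$ and a priori leaves $n_K\in\{1,2\}$ open. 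To force $n_K=1$ I would instead note that $\Q(\zeta_8)\subseteq K$ contains two \emph{distinct} imaginary quadratic fields, namely $\Q(i)=\Q(\zeta_4)$ and $\Q(\sqrt{-2})$ -- indeed the three quadratic subfields of $\Q(\zeta_8)$ are $\Q(i)$, $\Q(\sqrt2)$, $\Q(\sqrt{-2})$, via $\zeta_8^2=i$, $\zeta_8+\zeta_8^{-1}=\sqrt2$, $\zeta_8+\zeta_8^{3}=\sqrt{-2}$ -- so Lemma~\ref{lm:E}(2) applies and gives $n_K=1$, hence $\tau(T)=2$. This exhausts all $n$ satisfying the standing hypothesis, since each falls into exactly one of the four situations above.

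The argument is mostly bookkeeping once the inputs are lined up; the one genuinely delicate point is the last subcase $n=2^m$, $m\ge3$, where ramification at $2$ alone does not determine $n_K$ and one needs the extra structural input of two imaginary quadratic subfields to invoke Lemma~\ref{lm:E}(2). If one preferred not to use that lemma, the alternative would be to compute the local norm index $e_{T,2}=[\Z_2^\times:N(T(\Z_2))]$ at $2$ directly, which is feasible but more computational; verifying that the case division is exhaustive is routine.
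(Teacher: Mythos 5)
Your proposal follows exactly the paper's own argument: part (1) via the cyclicity criterion of Proposition~\ref{LiRu}(3), and part (2) split into the non-prime-power case (where Lemma~\ref{lm:ramK/K+} gives $S_{K/K^+}=\emptyset$ and Proposition~\ref{formula}(2) forces $n_K=1$) and the $n=2^m$, $m\ge 3$ case (where $\Q(\zeta_8)\subseteq K$ contains two distinct imaginary quadratic subfields, so Lemma~\ref{lm:E}(2) gives $n_K=1$). The extra explicit listing of the quadratic subfields of $\Q(\zeta_8)$ is a helpful but inessential elaboration; otherwise the reasoning and the invoked lemmas are the same.
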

\begin{proof} 
{(1)}  In this case the Galois group $G=\Gal(K/\Q)$ is cyclic and Proposition~\ref{LiRu}(3) shows that $\tau(T) = 1$. 

$(2)$ Suppose first that $n$ is not a power of a prime. By Proposition~\ref{formula}(2) the global norm index 
  $n_{K}$ divides $\prod_{p \in S_{K/K^{+}}}e_{T,p}$. 
  By \cite[Proposition~2.15]{Washington-cyclotomic}, the quadratic extension $K/K^{+}$ is unramified at all finite places of $K^+$. It follows that $S_{K/K^{+}} = \emptyset$, and hence that $n_{K} = 1$.
Proposition~\ref{formula}(1) then shows $\tau(T)=2$. 
    Now suppose the other case that
  $n=2^v$ with $v\ge 3$. Then $K$ contains $\mathbb{Q}(\zeta_{8})=\Q(\sqrt{2},\sqrt{-1})$,
  which contains two distinct imaginary quadratic fields. Thus,
  by Lemma~\ref{lm:E}, we have $n_{K} = 1$ and hence $\tau(T)=2$ by Proposition~\ref{formula}(1). 
  This completes the proof. \qed
\end{proof}

\subsection{Abelian extensions}

\begin{prop}\label{prop:ab-1}
Assume that $G=\Gal(K/\Q)$ is abelian.
\begin{enumerate}
    \item[$(1)$] We have $\tau(T) \in \{1,2\}$.
    \item[$(2)$] If $g$ is odd, then $\tau(T) = 1$.
    \item[$(3)$] If $g$ is even and $(*)$ splits, then $\tau(T) = 2$.
\end{enumerate}
\end{prop}
\begin{proof} 
(1) Write $G = \prod_{i=1}^{\ell} G_{i}$ as a product of cyclic subgroups $G_i$.
Then there exists $i$ such that the image of $\iota$ in the projection 
$G \to G_{i}$ is nontrivial, say $i=1$. Let $K_{1}$ be a subfield of $K$ with $\Gal(K_{1}/\mathbb{Q}) = G_{1}$, which is a cyclic CM field.
Then we have $\tau(T^{K_{1}, \mathbb{Q}})=1$ by Proposition~\ref{LiRu}(3), that is, $n_{K_1} = 2$. Since $K\supset K_1$, we have $n_{K} \mid n_{K_{1}}$ and $n_{K} \in \{1,2\}$. Therefore, $\tau(T) \in \{1,2\}$. 

(2) This follows from Proposition~\ref{LiRu}. 

(3) Since $(*)$ splits, $G \simeq \langle\iota\rangle \times G^{+}$. As $g$ is even, there is an epimorphism 
\begin{equation}\label{eq:biquad}
\langle\iota\rangle \times G^{+} \twoheadrightarrow
\langle\iota\rangle \times \mathbb{Z}/2\mathbb{Z}.   
\end{equation}
In particular, $K$ contains two distinct imaginary quadratic fields; thereby
$n_K=1$ and $\tau(T)=2$ by Lemma~\ref{lm:E}. \qed

\end{proof}

\subsection{Certain Galois extensions}

\begin{prop}
Assume the short exact sequence $(*)$ splits. Let $g^{ab} := |G^{+ {\rm ab}}|$ and $\Lambda:=X(T)$. Then $\tau(T)\in \{1,2\}$. Moreover, the following statements hold:

{\rm (1)} When $g$ is odd, $\tau(T) = 1$.

{\rm (2)} When $g$ is even,
\begin{enumerate} 
  \item[{\rm (i)}] if $g^{ab}$ is even, then $\tau(T) = 2$; 
  \item[{\rm (ii)}] if $g^{ab}$ is odd, then there is a unique nonzero
          element $\xi$ in the $2$-torsion subgroup $H^{2}(\Lambda)[2]$ 
          of $H^{2}(\Lambda)$. Moreover, $\tau(T) = 1$ if and only if
          its restriction $r_D(\xi) = 0$ in $H^2(D,\Lambda)$ for all $D\in \scrD$. 
\end{enumerate}
\end{prop}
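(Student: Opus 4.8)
The plan is to specialise the cohomological machinery of Section~\ref{sec:C} to $k=\Q$, $r=1$, $K$ the given Galois CM field and $E=K^+$, so that $L=K$, $G=\Gal(K/\Q)$, and $N_1=\Gal(K/K^+)=\langle\iota\rangle$ is cyclic of order $2$; note that $\iota$ is central in $G$ (it is the complex conjugation of the CM field $K$), so $N_1\triangleleft G$ and $K^+/\Q$ is Galois. Proposition~\ref{prop:Sha2L-1} then applies and gives $\tau(T)=2/|H^2(\Z)'|$. Since $(*)$ splits we have $G\simeq\langle\iota\rangle\times G^+$, and the subfield of $K$ fixed by $\{1\}\times G^+$ is a quadratic field moved by $\iota$, hence imaginary; Lemma~\ref{lm:E}(1) therefore yields $n_K\in\{1,2\}$ and $\tau(T)\in\{1,2\}$, which is the first assertion. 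Assertion~(1), the case $g$ odd, is immediate from Proposition~\ref{LiRu}(2).

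For assertion~(2) assume $g$ is even. I will first establish $|H^1(\Lambda)|=2$. By Proposition~\ref{prop:H1L}, $H^1(\Lambda^1)\simeq N_1^\vee\simeq\Z/2\Z$ and $H^1(\Lambda)=\Ker(\Ver_{G,N_1}^\vee\colon N_1^\vee\to G^{{\rm ab},\vee})$. As $(*)$ splits and $g=|G^+|$ is even, $G^+$ has an element of order $2$, so $G$ contains a copy of $(\Z/2\Z)^2$ (generated by $\iota$ and such an element) and hence has noncyclic Sylow $2$-subgroup; by Proposition~\ref{prop:rued} the transfer $\Ver_{G,N_1}\colon G^{\rm ab}\to N_1\simeq\Z/2\Z$ is not surjective, so it is zero, so $\Ver_{G,N_1}^\vee=0$ and $H^1(\Lambda)=N_1^\vee$ has order $2$. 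In the four-term exact sequence of Proposition~\ref{prop:Sha2L-1} the injection $H^1(\Lambda)\hookrightarrow H^1(\Lambda^1)$ is therefore an isomorphism, which forces the connecting map $\delta$ to vanish; combining this with $H^2(\Lambda^1)=0$ (Proposition~\ref{prop:H2L1-prod}, the case $r=1$), the long exact cohomology sequence of the lower row of \eqref{eq:diagram_char} produces an isomorphism $\wh N\colon H^2(\Z)\isoto H^2(\Lambda)$, restricting to $\wh N\colon H^2(\Z)'\isoto\Sha^2(\Lambda)$. In particular $\tau(T)=2/|\Sha^2(\Lambda)|$, and by \eqref{eq:H2Z} the group $\Sha^2(\Lambda)$ is a subgroup of $H^2(\Lambda)\simeq G^{{\rm ab},\vee}=\langle\iota\rangle^\vee\times G^{+{\rm ab},\vee}$ of order $1$ or $2$.

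Now distinguish the parity of $g^{\rm ab}=|G^{+{\rm ab}}|$. If $g^{\rm ab}$ is even, then $G^+$ admits a surjection onto $\Z/2\Z$, which composed with $G\to G^+$ gives a second imaginary quadratic subfield of $K$, distinct from the one found above; Lemma~\ref{lm:E}(2) then gives $n_K=1$, i.e.\ $\tau(T)=2$, which is~(2)(i). If $g^{\rm ab}$ is odd, then $G^{+{\rm ab},\vee}$ has odd order, so $G^{{\rm ab},\vee}$ is the direct product of $\Z/2\Z$ with a group of odd order; its $2$-torsion is exactly $\Z/2\Z$, whence, transporting along $\wh N$, $H^2(\Lambda)[2]=\{0,\xi\}$ for a unique nonzero $\xi$, and $\langle\xi\rangle$ is the only subgroup of order $2$ in $H^2(\Lambda)$. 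Since $|\Sha^2(\Lambda)|\in\{1,2\}$, either $\Sha^2(\Lambda)=0$ and $\tau(T)=2$, or $\Sha^2(\Lambda)=\langle\xi\rangle$ and $\tau(T)=1$; the latter holds exactly when $\xi\in\Sha^2(\Lambda)$, i.e.\ when $r_D(\xi)=0$ in $H^2(D,\Lambda)$ for every $D\in\scrD$. This is~(2)(ii).

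Most of the argument is routine tracking through the exact sequences of Section~\ref{sec:C}; the one delicate point is the vanishing of $\delta$, which is equivalent to the identity $|H^1(\Lambda)|=2$ and hence to the noncyclicity of the Sylow $2$-subgroup of $G$ — precisely the place where the hypotheses that $(*)$ splits and that $g$ is even are used.
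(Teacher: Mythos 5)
Your proposal is correct, and in the key step it follows a genuinely different route from the paper's. The paper establishes $H^1(\Lambda)\simeq\Z/2\Z$ by citing \cite[Proposition A.6]{achter-altug-gordon}, and it bounds $\Sha^2(\Lambda)$ inside $H^2(\Lambda)[2]$ by invoking \cite[Lemma A.7]{achter-altug-gordon} (which shows $H^1(G,T)$, hence $\Sha^2(\Lambda)$, is $2$-torsion). You reach both conclusions internally: you compute $H^1(\Lambda)$ directly from Proposition~\ref{prop:H1L}, observing that $G\simeq\langle\iota\rangle\times G^{+}$ with $g$ even forces a subgroup $(\Z/2\Z)^2$, hence a noncyclic Sylow $2$-subgroup, hence a non-surjective (and therefore zero, since the target is $\Z/2\Z$) transfer $\Ver_{G,N_1}$ by Proposition~\ref{prop:rued}; and then you bound $|\Sha^2(\Lambda)|\le 2$ simply from $\tau(T)\in\{1,2\}$ together with the four-term exact sequence identity $\tau(T)=2/|\Sha^2(\Lambda)|$, rather than from a torsion bound on $H^1(G,T)$. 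The remaining steps — $\tau(T)\in\{1,2\}$ from the imaginary quadratic subfield and Lemma~\ref{lm:E}, case~(1) from Proposition~\ref{LiRu}(2), case~(2)(i) from two distinct imaginary quadratic subfields, $H^2(\Z)\isoto H^2(\Lambda)$ from $\delta=0$ and $H^2(\Lambda^1)=0$, and the final equivalence via the $2$-torsion of $G^{{\rm ab},\vee}\simeq\Z/2\Z\times G^{+{\rm ab},\vee}$ — coincide with the paper's. Your version has the advantage of staying entirely within the machinery developed in Sections~\ref{sec:T}--\ref{sec:C}, avoiding the external citations, at the cost of one additional group-theoretic observation (noncyclic Sylow $2$-subgroup and Proposition~\ref{prop:rued}).
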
 

\begin{proof} 
    Since $(*)$ splits, $G \simeq \langle\iota\rangle \times G^{+}$. It follows that $K$ contains an imaginary quadratic field $E$. Since $n_E=2$, one has $n_K\in\{1,2\}$ and $\tau(T)\in \{1,2\}$.
   The statement (1) follows from Proposition~\ref{LiRu}(2). 
 
    (2) We have $G \simeq \langle\iota\rangle \times G^{+}$ and $G^{\rm ab}\simeq \<\iota\>\times G^{+ {\rm ab}}$.
    {\rm (i)} Suppose $g^{ab}$ is even. As \eqref{eq:biquad}, $K$ contains
          two distinct imaginary quadratic fields. Thus, $n_{K} = 1$ and $\tau(T) = 2$. 
{\rm (ii)} Suppose $g^{ab}$ is odd. Put $\Lambda^1=X(T^{K,1})$ and we have the following exact sequence
$$0 \rightarrow
      H^{1}(\Lambda) \rightarrow H^{1}(\Lambda^1) \rightarrow
      H^{2}(\mathbb{Z}) \rightarrow H^{2}(\Lambda) \rightarrow H^2(\Lambda^1). $$
      We have $H^{1}(\Lambda^1)\simeq \Z/2\Z$ by Proposition~\ref{prop:H1L}(1), $H^2(\Lambda^1)=0$ by Proposition~\ref{prop:H2L1-prod}, and $H^{1}(\Lambda)\simeq \Z/2\Z$ by \cite[Proposition A.6]{achter-altug-gordon}. 
This gives $H^{2}(\mathbb{Z}) \cong H^{2}(\Lambda).$ 
The proof of \cite[Lemma A.7]{achter-altug-gordon} shows that $H^1(G,T)$ is a $2$-torsion group. It follows that $\Sha^2(\Lambda)$ is also a $2$-torsion group. Therefore, $\Sha^2(\Lambda)\subset H^2(\Lambda)[2]$. 
Since $g^{\rm ab}$ is odd, 
$$ H^2(\Lambda)[2]\simeq H^2(\Z)[2]=\Hom(G^{\rm ab}, \Q/\Z)[2]=\Hom(\langle\iota\rangle \times {G^{+{\rm ab}}},
\mathbb{Q}/\mathbb{Z})[2]\cong
\mathbb{Z}/2\mathbb{Z}.$$

Let $\xi \in H^{2}(\Lambda)[2]$ be the unique nonzero
element. Then 
\begin{align*}
\tau(T)=1 & \iff \Sha^2(\Lambda)\simeq \Z/2\Z \iff \xi \in \Sha^{2}(\Lambda) \\
& \iff r_D(\xi)=0 \quad \text{for all $D\in \scrD$. } 
\end{align*}
This completes the proof of the proposition. \qed
\end{proof}

\begin{rmk}
When $G$ is non-abelian and the short exact sequence $(*)$ does not split, 
we do not know the value of $\tau(T)$ in general. However, 
if the involution $\iota$ is non-trivial on the maximal abelian extension 
$K_{ab}$ over $\Q$ in $K$, 
then $K_{ab}$ is a CM abelian field and it follows from Proposition~\ref{prop:ab-1} that $\tau(T) \in \{1,2\}$.

It remains to determine the Tamagawa number when $G$ is non-abelian, $(*)$ is non-split, and $K_{ab}$ is totally real. This includes the cases of $G=Q_{8}$ and the dihedral groups, for which we treat in the next subsections.
\end{rmk}


\subsection{$Q_8$-extensions}

The quaternion group $Q_8=\{\pm 1, \pm i, \pm j, \pm k\}$ is the group of $8$ elements generated by $i, j$ with usual relations $i^2=j^2=-1$ and $ij=-ji=k$. 
We have the following well-known properties of $Q_8$. 
\begin{lemma}\label{lm:Q8} 
\begin{itemize}
    \item[(1)] The group $Q_8$ contains $6$ elements of order $4$, one element of order $2$, and the identity. It does not has a subgroup isomorphic to $\Z/2\Z\times \Z/2\Z$. Thus, every proper subgroup is cyclic.
    \item[(2)] Every non-trivial subgroup contains $\{\pm 1\}$, and only subgroup which maps onto $Q_8/\{\pm 1\}$ is $Q_8$.
    \item[(3)] The center $Z(Q_8)=\{\pm 1\}=D(Q_8)$, where $D(Q_8)=[Q_8,Q_8]$.
\end{itemize}
\end{lemma}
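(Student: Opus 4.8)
The plan is to verify each of the three items by direct inspection of the $8$ elements of $Q_8$; no machinery beyond elementary finite group theory is needed, and the whole argument is self-contained.

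First I would record the orders of the elements. In $Q_8=\{\pm 1,\pm i,\pm j,\pm k\}$ one has $1$ of order $1$, the element $-1$ of order $2$, and each of $\pm i,\pm j,\pm k$ satisfies $x^2=-1\neq 1$, hence has order $4$; this gives exactly $6$ elements of order $4$ and a single element of order $2$, proving the first sentence of (i). Since there is only one involution, $Q_8$ cannot contain a subgroup isomorphic to $\Z/2\Z\times\Z/2\Z$ (which has three involutions). By Lagrange a proper subgroup has order $1$, $2$ or $4$: the order-$2$ one must be $\{\pm 1\}$, and an order-$4$ subgroup that is not cyclic would be a Klein four-group, which we just excluded, so it is generated by one of the order-$4$ elements. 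Hence every proper subgroup of $Q_8$ is cyclic, which is the last assertion of (i).

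For (ii), any non-trivial subgroup $H$ contains an element of order $2$ or $4$; since every order-$4$ element squares to $-1$ and the only order-$2$ element is $-1$, in either case $-1\in H$, i.e. $\{\pm 1\}\subseteq H$. The quotient $\bar G:=Q_8/\{\pm 1\}$ is a Klein four-group, hence abelian, and a subgroup $H\subseteq Q_8$ maps onto $\bar G$ exactly when $H\cdot\{\pm 1\}=Q_8$. If $H$ is proper then, by (i), $H$ is cyclic of order at most $4$ and (when $H\neq\{1\}$) already contains $\{\pm 1\}$, so $H\cdot\{\pm 1\}=H\subsetneq Q_8$; the trivial subgroup obviously does not surject either. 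Thus $H=Q_8$ is the only subgroup mapping onto $Q_8/\{\pm 1\}$.

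Finally, for (iii): $-1$ is central by the defining relations, whereas $i,j,k$ are not (e.g. $ij=k\neq -k=ji$), so $Z(Q_8)=\{\pm 1\}$. For the commutator subgroup, $Q_8/\{\pm 1\}$ is abelian, so $D(Q_8)\subseteq\{\pm 1\}$; and $D(Q_8)\neq\{1\}$ because $Q_8$ is non-abelian, whence $D(Q_8)=\{\pm 1\}=Z(Q_8)$. I expect no genuine obstacle here; the only point requiring a moment's thought is the exclusion of a Klein four subgroup in (i), and that is immediate once the involutions are counted. One could instead simply cite a standard reference for the subgroup lattice of $Q_8$, but the argument above is short enough to include directly.
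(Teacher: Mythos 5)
Your proof is correct and complete: the element-by-element order count, the exclusion of a Klein four subgroup via counting involutions, the observation that every nontrivial subgroup contains $-1$ (since any order-$4$ element squares to $-1$), and the commutator/center computation are all standard and accurate. The paper explicitly omits the proof of this lemma, so there is no alternative argument to compare against; your direct verification is exactly what one would supply.
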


\begin{prop}\label{prop:Q8CM}
Let $P$ and $Q$ be two odd positive integers such that 
\begin{equation}\label{eq:PQ}
  P-1=a^2,\quad Q-1=Pb^2  
\end{equation}
for some integers $a,b\in  \bbN$. Assume that $Q$ is not a square. Let $K:=\Q(\beta)$ be the simple extension of $\Q$ generated by $\beta$ which satisfies $\beta^2=\alpha:=-(P+\sqrt{P})(Q+\sqrt{Q})$. Then  
$K$ is a  Galois CM field with Galois group $Q_8$ with maximal totally real field $K^+=\Q(\sqrt{P},\sqrt{Q})$.  The Galois group $\Gal(K/\Q)$ is generated by $\tau_1$ and $\tau_2$ given by  
\begin{equation}
    \label{eq:tau12}
   \tau_1(\beta)=\frac{(\sqrt{P}-1)}{a} \beta, \quad  \tau_2(\beta)=\frac{(\sqrt{Q}-1)}{\sqrt{P} a} \beta. 
\end{equation}
Moreover, for each prime $\ell$, the decomposition group $D_\ell$ is cyclic except when $\ell|Q$. For $\ell|Q$, one has
\begin{equation}
  \label{eq:DlQ}
  D_\ell=\begin{cases}
  \Z/4\Z, & \text{if $\left(\frac{P}{\ell} \right)=1$;} \\
  Q_8 ,  & \text{if $\left(\frac{P}{\ell} \right)=-1$.} 
  \end{cases}
\end{equation}
\end{prop}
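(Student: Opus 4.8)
\emph{Strategy.} The plan is to first pin down $K$, $K^+$ and $G:=\Gal(K/\Q)$ by an explicit computation, and then to read off every decomposition group from the very short subgroup lattice of $Q_8$ recorded in Lemma~\ref{lm:Q8}, using that the projection $G\twoheadrightarrow\Gal(K^+/\Q)$ carries decomposition groups onto decomposition groups (Lemma~\ref{lm:dec}). For the first part: since $P,Q$ are non-square positive odd integers we have $P,Q\ge 3$, so all conjugates $P\pm\sqrt P$, $Q\pm\sqrt Q$ are positive and $\alpha=-(P+\sqrt P)(Q+\sqrt Q)$ is totally negative in $K^+:=\Q(\sqrt P,\sqrt Q)$. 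The four conjugates $-(P\pm\sqrt P)(Q\pm\sqrt Q)$ of $\alpha$ over $\Q$ are pairwise distinct, so $[\Q(\alpha):\Q]=4$, i.e. $\Q(\alpha)=K^+$; hence $K=\Q(\beta)\supseteq K^+$ with $[K:K^+]\le 2$, and since $\alpha$ is totally negative it is not a square in the totally real field $K^+$, so $[K:\Q]=8$ and $K$ is a totally imaginary quadratic extension of $K^+$, i.e. a CM field with maximal totally real subfield $K^+$. Writing $\Gal(K^+/\Q)=\{1,\sigma_1,\sigma_2,\sigma_1\sigma_2\}$ with $\sigma_1:\sqrt P\mapsto-\sqrt P$ fixing $\sqrt Q$ and $\sigma_2:\sqrt Q\mapsto-\sqrt Q$ fixing $\sqrt P$, the identities $P^2-P=Pa^2$ and $Q^2-Q=QPb^2$ show $\sigma_i(\alpha)/\alpha$ is a square in $K^+$ for $i=1,2$, so each $\sigma_i$ lifts to $K$ and $K/\Q$ is Galois of degree $8$. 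For the lifts $\tau_1,\tau_2$ of $\sigma_1,\sigma_2$ exhibited in the statement (writing $\tau_i(\beta)=c_i\beta$), the relations $P-1=a^2$, $Q-1=Pb^2$ give $\tau_i^2=\iota$, where $\iota:\beta\mapsto-\beta$ is the complex conjugation; since $\iota$ is central, $\{\tau_1,\iota\tau_1,\tau_2,\iota\tau_2\}$ are four distinct elements of order $4$ in $G$, so $G\not\cong D_4$ and therefore $G\cong Q_8$, with $\langle\iota\rangle=Z(Q_8)$, $G/\langle\iota\rangle\cong\Gal(K^+/\Q)\cong(\Z/2\Z)^2$, and $\langle\tau_1,\tau_2\rangle=G$.

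\emph{Decomposition groups, the case $\ell\nmid Q$.} By Lemma~\ref{lm:Q8} every proper subgroup of $Q_8$ is cyclic, and the only subgroup of $Q_8$ surjecting onto $Q_8/\langle\iota\rangle$ is $Q_8$ itself; combining this with Lemma~\ref{lm:dec} gives: $D_\ell$ is noncyclic $\iff D_\ell=Q_8\iff D_\ell^{K^+}=\Gal(K^+/\Q)\iff\ell$ is non-split (inert or ramified) in each of the three quadratic subfields $\Q(\sqrt P),\Q(\sqrt Q),\Q(\sqrt{PQ})$. I would then run through the cases. If $\ell$ is odd with $\ell\nmid PQ$: when $(P/\ell)=(Q/\ell)=-1$ then $(PQ/\ell)=1$, so $\ell$ splits in $\Q(\sqrt{PQ})$; otherwise $\ell$ splits in $\Q(\sqrt P)$ or $\Q(\sqrt Q)$. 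If $\ell\mid P$: then $\ell$ is odd, $\ell\nmid Q$ and $Q\equiv1\pmod\ell$, so $\ell$ splits in $\Q(\sqrt Q)$. If $\ell=2$: the relations force $a,b$ even, hence $P\equiv Q\equiv1\pmod 4$, and reducing mod $8$ one finds $2$ splits in $\Q(\sqrt P)$ or $\Q(\sqrt Q)$ when one of $P,Q$ is $\equiv1\pmod 8$, and in $\Q(\sqrt{PQ})$ otherwise (then $PQ\equiv1\pmod 8$). Thus in every case with $\ell\nmid Q$ the group $D_\ell^{K^+}$ is a proper subgroup of $(\Z/2\Z)^2$, so $|D_\ell|\le 4$ and $D_\ell$ is cyclic.

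\emph{The case $\ell\mid Q$.} Such $\ell$ is odd and $\ell\nmid P$ (otherwise $\ell\mid Q-1$), and $\ell$ divides $Q$ to an odd power in the cases of interest (automatic when $Q$ is squarefree, in particular when $Q$ is prime, as in the application to Landau pairs), so $\ell$ is ramified in $\Q(\sqrt Q)$ and in $\Q(\sqrt{PQ})$ and unramified in $\Q(\sqrt P)$. If $(P/\ell)=1$ then $\ell$ splits in $\Q(\sqrt P)$, so $D_\ell^{K^+}$ is the order-$2$ subgroup of $(\Z/2\Z)^2$ fixing $\Q(\sqrt P)$; its preimage in $Q_8$ is a cyclic subgroup of order $4$, and since the unique order-$2$ subgroup of $Q_8$ lies in $\langle\iota\rangle$ (so maps to $0$), $D_\ell$ must equal this full preimage, i.e. $D_\ell\cong\Z/4\Z$. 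If $(P/\ell)=-1$ then $\ell$ is non-split in all three quadratic subfields, so $D_\ell^{K^+}=(\Z/2\Z)^2$ and hence $D_\ell=Q_8$. This yields the asserted dichotomy.

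\emph{Main obstacle.} The group-theoretic input ($Q_8$ versus $D_4$, and recovering $D_\ell$ from $D_\ell^{K^+}$) is immediate from Lemma~\ref{lm:Q8}; the genuine work — and the point most likely to require care — is the ramification bookkeeping in the three quadratic subfields of $K^+$, i.e. extracting from the Diophantine relations $P-1=a^2$ and $Q-1=Pb^2$ the precise congruences and Legendre symbols governing the splitting of $\ell$. The delicate points are the $\ell=2$ analysis (where one must first deduce $P\equiv Q\equiv 1\pmod 4$) and the verification that, for $\ell\mid Q$, the behaviour of $\ell$ in $\Q(\sqrt P)$ is controlled exactly by $(P/\ell)$ while $\Q(\sqrt Q)$ and $\Q(\sqrt{PQ})$ are both ramified at $\ell$.
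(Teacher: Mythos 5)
Your overall strategy coincides with the paper's: pin down $K^+$, show $K/\Q$ is Galois by checking $\sigma_i(\alpha)/\alpha$ is a square in $K^+$, identify $G$, then use the projection $G\twoheadrightarrow G^+$ together with the subgroup lattice of $Q_8$ to read off decomposition groups. Your treatment of decomposition groups is actually more careful than the paper's at $\ell=2$ (the paper silently subsumes it under ``$\ell\nmid PQ$ is unramified in $K^+$,'' which requires the congruences $P\equiv Q\equiv 1\pmod 4$ that you make explicit), and you correctly flag that the dichotomy \eqref{eq:DlQ} tacitly assumes $\ell$ ramifies in $\Q(\sqrt{Q})$ (automatic when $Q$ is squarefree); the paper's proof quietly makes this same assumption.

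However, there is a genuine gap at the point where you conclude $G\cong Q_8$. You show $\tau_1,\iota\tau_1,\tau_2,\iota\tau_2$ are four distinct elements of order $4$ and deduce ``$G\not\cong D_4$, therefore $G\cong Q_8$.'' But possessing exactly four elements of order $4$ does not single out $Q_8$ among groups of order $8$: the abelian group $\Z/4\Z\times\Z/2\Z$ also has exactly four elements of order $4$, all squaring to the same central involution, and its quotient by that involution is $(\Z/2\Z)^2$. Nothing you have established so far (existence of a central $\iota$, $\tau_i^2=\iota$, $G/\langle\iota\rangle\cong(\Z/2\Z)^2$) distinguishes $Q_8$ from $\Z/4\Z\times\Z/2\Z$. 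You must additionally verify noncommutativity. The paper does this by the direct computation $\tau_1\tau_2(\beta)=-\tau_2\tau_1(\beta)$, i.e.\ $\tau_1\tau_2\tau_1^{-1}=\tau_2^{-1}$, using that $\sigma_1$ sends $\sqrt{P}\mapsto-\sqrt{P}$ while $\sigma_2$ fixes $\sqrt{P}$. Alternatively, you could compute $(\tau_1\tau_2)^2=\iota$ directly (using $(Q-1)(P-1)=Pa^2b^2$) to produce a fifth and sixth element of order $4$, which forces $Q_8$ since $\Z/4\Z\times\Z/2\Z$ has only four. Either way, one explicit computation with the cocycles $c_i$ is unavoidable; the element count alone does not suffice.

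A minor remark independent of your proof: the coefficient in $\tau_2(\beta)$ should be $(\sqrt{Q}-1)/(\sqrt{P}\,b)$, not $(\sqrt{Q}-1)/(\sqrt{P}\,a)$, since $\sigma_2(\alpha)/\alpha=(\sqrt{Q}-1)^2/(Pb^2)$; the statement and the paper's proof share this typo, and it matters if one wants $(\tau_1\tau_2)^2=\iota$ to come out cleanly.
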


\begin{proof}
Note that $\gcd(P,Q)=1$ and $P$ is not a square. Then $\Q(\sqrt{P},\sqrt{Q})$ is a totally real  biquadratic field and its Galois group is generated by $\sigma_1$ and $\sigma_2$, where 
\[ \sigma_1: \begin{cases} \sqrt{P} \mapsto -\sqrt{P} \\
\sqrt{Q} \mapsto \sqrt{Q}\end{cases}, \quad 
\sigma_2: \begin{cases} \sqrt{P} \mapsto \sqrt{P} \\
\sqrt{Q} \mapsto -\sqrt{Q}. \end{cases} \]

It is clear that $\alpha$ is a totally negative, and hence $K$ is a CM field. The maximal totally real subfield $K^+=\Q(\alpha)\subset \Q(\sqrt{P},\sqrt{Q})$.  
Since the minimal polynomial of $\alpha$ is of degree $4$, one has $K^+=\Q(\sqrt{P},\sqrt{Q})$. 
Let $\tau_i\in \Gal(\Qbar/\Q)$, $i=1,2$, be elements such that $\tau_i|_{K^+}=\sigma_i$. One computes
\[ \frac{\tau_1(\beta)^2}{\beta^2}= \frac{\tau_1(\alpha)}{\alpha}=\frac{-\sigma_1(P+\sqrt{P})\sigma_1(Q+\sqrt{Q})}{-(P+\sqrt{P})(Q+\sqrt{Q})}
=\frac{(P-\sqrt{P})^2}{P^2-P}=\frac{(\sqrt{P}-1)^2}{P-1}=\frac{(\sqrt{P}-1)^2}{a^2}, \]
and obtains
\[ \tau_1(\beta)=\pm \frac{(\sqrt{P}-1)}{a} \beta,\quad  \text{and} \quad \tau_1(K)\subset K. \]
Similarly, one computes $\tau_2(\beta)^2/\beta^2=(\sqrt{Q}-1)^2/Pb^2$ and obtains
\[ \tau_2(\beta)=\pm \frac{(\sqrt{Q}-1)}{\sqrt{P} a} \beta,\quad  \text{and} \quad \tau_2(K)\subset K. \]
It follows that $K/\Q$ is Galois. 
Let $\tau_1,\tau_2\in \Gal(K/\Q)$ be defined as in \eqref{eq:tau12}.
One easily computes $\tau_1^2(\beta)=-\beta, \tau_2^2(\beta)=-\beta,  
 \tau_1 \tau_2(\beta)=\iota \tau_2\tau_1(\beta)$ and obtains the relations
\[ \tau_1^4=\tau_2^4=1,  \quad \tau_1^2=\tau_2^2,  \quad \tau_1 \tau_2 \tau_1^{-1}=\tau_2^{-1}, \]
showing that $K$ is a $Q_8$-CM field. 

Denote by $D_\ell^+$ the decomposition group at $\ell$ in $G^+=\Gal(K^+/\Q)$. Then $D_\ell=Q_8$ if and only if $D_\ell^+=G^+$. If $\ell\nmid PQ$, then $\ell$ is unramified in $K^+$ and $D_\ell^+$ is cyclic. Thus, $D_\ell^+$ cannot be $G^+$, $D_\ell\neq Q_8$ and $D_\ell$ is cyclic. 
If $\ell|P$, then $ ( \frac{Q}{\ell} ) = ( \frac{1}{\ell} )=1$. So $\ell$ is ramified in $\Q(\sqrt{P})$ and splits in $\Q(\sqrt{Q})$. 
Thus, $D_\ell^+ \neq G^+$ and hence $D_\ell$ is cyclic. It remains to treat the case $\ell|Q$. If $(\frac{P}{\ell})=1$, then $D_\ell^+\simeq \Z/2\Z$ and hence $D_\ell\simeq \Z/4\Z$. Otherwise, $\ell$ is ramified in $\Q(\sqrt{Q})$ and inert in $\Q(\sqrt{P})$. One has $D_\ell^+=G^+$ and $D_\ell=Q_8$. \qed
\end{proof}

\begin{prop}\label{prop:tauQ8}
Let $K$ be a CM field which is Galois over $\Q$ with Galois group $Q_8$. Then \begin{equation}
    \label{eq:tauQ8}
    \tau(T^{K,\Q})=\begin{cases}
    1/2, & \text{if every decomposition group of $K/\Q$ is cyclic;} \\
    2, & \text{otherwise}. 
    \end{cases}
\end{equation}   
\end{prop}
\begin{proof}
Put $G=\Gal(K/\Q)\simeq Q_8$ and $N=\<\iota\>$. We shall show that
\begin{equation}\label{eq:H1LQ8}
    H^1(\Lambda)=H^1(G,\Lambda)=\Z/2\Z
\end{equation}
and 
\begin{equation}\label{eq:Sha2LQ8}
    \Sha^2(\Lambda)=\begin{cases}
    \Z/2\Z \oplus \Z/2\Z, & \text{if every decomposition group of $K/\Q$ is cyclic;} \\
    0, & \text{otherwise}. 
    \end{cases}
\end{equation}
Then \eqref{eq:tauQ8} follows from \eqref{eq:H1LQ8} and \eqref{eq:Sha2LQ8}.

By Proposition~\ref{prop:H1L} and \eqref{eq:4term_es}, we have an exact sequence
\begin{equation}
\begin{tikzcd}
 0 \arrow{r} & H^1(\Lambda) \arrow{r}  & \
 H^1(\Lambda^1)\simeq N^{\rm ab,\vee} \arrow{r}{ \Ver_{G,N}^{\vee}}  & H^2(\Z)=G^{\rm ab,\vee} \arrow{r} & H^2(\Lambda) \arrow{r} & 0,
\end{tikzcd}     
\end{equation}
noting that $H^2(G, \Lambda^1)=0$ when $K/\Q$ is a Galois CM field. Since the $2$-Sylow subgroup of $Q_8$ is not cyclic, the transfer $\Ver_{G,N}$ is not surjective by Proposition~\ref{prop:rued}. Therefore, $\Ver_{G,N}^{\vee}$ is not injective and is zero.
This proves $H^1(\Lambda)\simeq \Z/2\Z$ and $H^2(\Z)'\simeq \Sha^2(\Lambda)$ from \eqref{eq:formula-2}. 

If there is a finite place of $K$ whose decomposition group is not cyclic, then we have $\Sha^2(\Lambda)=0$. Therefore, $\tau(T^{K,\Q})=2$. 
On the other hand, suppose that every decomposition group of $K/\Q$ is cyclic. One has $G^{\rm ab}=G/N\simeq \Z/2\Z \times\Z/2\Z$. Recall
\[ H^2(\Z)'=\{f\in \Hom(G, \Q/\Z): (*)\ f|_D \in {\rm Im} \Ver_{D,D\cap N}^{\vee}, \ \forall\, D\in \scrD \}. \]
Note that $f|_N=0$, so the restriction to $D$ map $f|_D$ is contained in $\Hom(D/N, \Q/\Z)$ whenever $N\subset D$ (also noting that $N\subset D$ if $D$ is not trivial). So if $D=0$ or $D\simeq \Z/2\Z$,
then $f|_D=0$ and condition $(*)$ above is satisfied automatically. Suppose $D\simeq \Z/4\Z$. Since the $2$-Sylow subgroup of $D$ is cyclic, the map $\Ver_{D,N}^\vee:N^\vee \to D^\vee$ is injective and ${\rm Im} \Ver_{D,N}^\vee$ the unique subgroup of $D^\vee$ of order $2$. Then $f|_D\in \Hom(D/N, \Q/\Z)={\rm Im} \Ver_{D,N}^\vee$ and condition $(*)$ is also satisfied automatically. Therefore, $\Sha^2(\Lambda)\simeq H^2(\Z)'=H^2(\Z)\simeq \Z/2\Z \oplus \Z/2\Z$. This proves the proposition. \qed
\end{proof}

\begin{cor} \label{rem:tauQ8}
Let $P$ and $Q$ be two odd positive integers, and $K=\Q(\sqrt{\alpha})$ with $\alpha=-(P+\sqrt{P})(Q+\sqrt{Q})$ be the Galois CM with group $Q_8$ as in Proposition~\ref{prop:Q8CM}. Then
\begin{equation*}
    \tau(T^{K,\Q})=\begin{cases}
    1/2, & \text{if $\left ( \frac{P}{q} \right) =1$ for all primes  $q|Q$;} \\
    2, & \text{otherwise}. 
    \end{cases}
\end{equation*}
\end{cor}

\begin{proof}
By Proposition \ref{prop:Q8CM}, the cyclicity of all decomposition groups of $K/\Q$ is equivalent to the condition $\left(\frac{P}{q}\right)=1$ for all primes $q\mid Q$. Hence the assertion follows from Proposition \ref{prop:tauQ8}. \qed
\end{proof}

\subsection{The dihedral case} Let $D_n$ denote the dihedral group of order $2n$. 

\begin{lemma}\label{lm:density}
  Suppose the CM-field $K$ is Galois over $\Q$ with Galois group $G$, 
  and let  
\[ S:=\{\sigma\in G : \iota \not\in \<\sigma\>\}. \]
  If $|S|\ge |G|/2$, then $n_K\le 2$. If the inequality is strict holds, then $n_K=1$.
\end{lemma}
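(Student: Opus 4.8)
The plan is to convert the claim about the norm index $n_K$ into a purely group-theoretic statement, using the identification $n_K=|H^2(\Z)'|$ that comes out of Ono's formula. Since $K$ is a CM \emph{field} we are in the case $r=1$ of Section~\ref{sec:C} with $E=K^+$ and $N:=N_1=\Gal(K/K^+)=\langle\iota\rangle$. For a Galois CM field the complex conjugation $\iota$ is central in $G=\Gal(K/\Q)$, so $\langle\iota\rangle\triangleleft G$, the extension $K^+/\Q$ is Galois, and $K$ is the splitting field of $T=T^{K,\Q}$; thus $H_1=\Gal(K/K)=\{1\}$ and $\wt N_1=\langle\iota\rangle$. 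Comparing $\tau(T)=2/n_K$ (Proposition~\ref{formula}(1) with $r=1$) with $\tau(T)=|N|/|H^2(\Z)'|=2/|H^2(\Z)'|$ (Proposition~\ref{prop:Sha2L-1}, valid since $K/K^+$ is cyclic) yields $n_K=|H^2(\Z)'|$. Since $\wt N_1\triangleleft G$ and $K/K^+$ is cyclic, Proposition~\ref{prop:Sha2L} applies, and because $H_1=\{1\}$ the group $\ol D_1$ appearing there is simply $D\cap\langle\iota\rangle$; hence
\[
H^2(\Z)'=\bigl\{\,f\in\Hom(G,\Q/\Z)\ :\ f|_D\in{\rm Im}\bigl(\Ver_{D,\,D\cap\langle\iota\rangle}^\vee\bigr)\ \text{ for all }D\in\scrD\,\bigr\}.
\]

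Next I would show that every $f\in H^2(\Z)'$ vanishes on $S$. If $D\in\scrD$ satisfies $\iota\notin D$, then $D\cap\langle\iota\rangle=\{1\}$ for order reasons, so $\Ver_{D,D\cap\langle\iota\rangle}$ is the map into the trivial group and ${\rm Im}(\Ver_{D,D\cap\langle\iota\rangle}^\vee)=0$; the defining condition then forces $f|_D=0$. Now fix $\sigma\in S$, so $\langle\sigma\rangle\cap\langle\iota\rangle=\{1\}$. By the Chebotarev density theorem $\langle\sigma\rangle$ occurs as a decomposition group at some prime unramified in $K/\Q$ (one whose Frobenius is $\sigma$), hence is conjugate to a member $D$ of $\scrD$, and $D$ again misses $\iota$ because $\iota$ is central. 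Thus $f|_D=0$, and since $f$ is a homomorphism into an abelian group it is conjugation-invariant, so $f(\sigma)=0$. Therefore
\[
H^2(\Z)'\ \subseteq\ \bigl\{\,f\in\Hom(G,\Q/\Z)\ :\ f|_S=0\,\bigr\}\ =\ \Hom\bigl(G/\langle S\rangle,\,\Q/\Z\bigr),
\]
where $\langle S\rangle\le G$ is the subgroup generated by $S$.

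Finally the group-theoretic conclusion. If $\langle S\rangle=G$, the right-hand group is trivial, so $n_K=|H^2(\Z)'|=1$. If $\langle S\rangle\subsetneq G$, then $S\subseteq\langle S\rangle$ together with $|S|\ge|G|/2$ gives $|G|/2\le|S|\le|\langle S\rangle|\le|G|/2$, forcing $[G:\langle S\rangle]=2$ (and $|S|=|G|/2$), whence $|\Hom(G/\langle S\rangle,\Q/\Z)|=2$ and $n_K\le2$. So $|S|\ge|G|/2$ always gives $n_K\le2$; and if $|S|>|G|/2$ the second alternative is impossible, so $\langle S\rangle=G$ and $n_K=1$. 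The argument is short once Sections~\ref{sec:C}--\ref{sec:MC} are in place; the only point needing care is to verify that the hypotheses of Propositions~\ref{prop:Sha2L-1} and~\ref{prop:Sha2L} really hold here (centrality of $\iota$, $L=K$, $H_1=\{1\}$, $\wt N_1\triangleleft G$) and that at a decomposition group missing $\iota$ the local Selmer-type condition genuinely collapses to $f|_D=0$ — i.e. that $\ol D_1=D\cap\langle\iota\rangle$ and a transfer into the trivial group is zero.
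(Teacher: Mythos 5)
Your argument is correct, and it takes a genuinely different route from the paper. The paper's proof (attributed to Jiangwei Xue) works directly with class field theory: it introduces the class field $L/\Q$ with $\Gal(L/\Q)\simeq\A^\times/\Q^\times N(T(\A))$, so $n_K=[L:\Q]$, and shows by a local argument that any prime whose Frobenius lies in $S$ splits completely in $L$; Chebotarev then forces $|S|/|G|\le 1/[L:\Q]$, i.e.\ $n_K\le|G|/|S|$. You instead stay entirely on the character side: via Proposition~\ref{prop:nKH2Z} you identify $n_K=|H^2(\Z)'|$, via Proposition~\ref{prop:Sha2L} (valid here since $\iota$ is central, so $\wt N_1=\<\iota\>$ is normal, and $H_1=\{1\}$, so $\ol D_1=D\cap\<\iota\>$) you describe $H^2(\Z)'$ by a local transfer condition, and you observe that at any decomposition group $D$ with $\iota\notin D$ the target $\ol D_1$ is trivial, so the condition collapses to $f|_D=0$. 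Chebotarev then enters only through the simple fact that every cyclic subgroup $\<\sigma\>$ is a decomposition group (up to conjugacy), giving $H^2(\Z)'\subseteq\{f\in\Hom(G,\Q/\Z):f|_S=0\}$ and hence $n_K\le[G:\<S\>]$. Both bounds imply the lemma; yours is in fact a priori at least as sharp since $|\<S\>|\ge|S|$. The trade-off is that your proof leans on the cohomological machinery of Sections~\ref{sec:C}--\ref{sec:MC} and Proposition~\ref{prop:nKH2Z}, while the paper's is self-contained, shorter, and uses only the adelic description of $n_K$. One small point worth making explicit in your write-up: $\<S\>$ need not be normal, so the clean identity should be $\{f\in\Hom(G,\Q/\Z):f|_S=0\}=\Hom\bigl(G/([G,G]\<S\>),\Q/\Z\bigr)$; the inequality $|\{f:f|_S=0\}|\le[G:\<S\>]$ you actually use is still valid, so this does not affect the conclusion.
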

\begin{proof}
This is due to Jiangwei Xue. Let $L/\Q$ be the class field corresponding to the open subgroup $\Q^\times N(T(\A))\subset \A^\times$ by class field theory. Then $n_K=[\A^\times: \Q^\times N_{L/\Q}(\A_L^\times)]=[L:\Q]$. Suppose $p$ is a rational prime unramified in $K/\Q$ such that the Artin symbol $(p, K/\Q)$ lies in $S$. Since $p$ splits completely in the fixed field $E=K^{D_p}$ of the decomposition group $D_p=\<(p,K/\Q)\>$ of $G$ at $p$ and $\<\iota\>\cap D_p=\{1\}$ (by the definition of $S$), one has $K=K^+ E$ and that every prime $v$ of $K^+$ lying above $p$ splits in $K$, and therefore  $\Qp^\times \subset N(T(\A)) \subset Q^\times N_{L/\Q}(\A_L^\times)$. 
It follows from class field theory that $p$ splits completely in $L$. Thus, the density of $S$ is less than or equal to that of primes splitting completely in $L$. 
By the Chebotarev density theorem, one yields $|S|/|G| \le 1/[L:\Q]$. Therefore, $n_K\le |G|/|S|$ and the assertions then follow immediately. \qed 
\end{proof}

\begin{prop}\label{prop:Dn}
  Let $K$ be a Galois CM field with group $G=D_n$ and $T$ the associated CM torus over $\Q$. Then $n$ is even and $\tau(T)=2$.
\end{prop}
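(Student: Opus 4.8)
The plan is to derive both conclusions from the density criterion of Lemma~\ref{lm:density}. The first step is to pin down complex conjugation $\iota$ inside $G\cong D_n$. Since the subextension $K/K^+$ is the one fixed by $\iota$, the element $\iota$ is a central involution of $G$. When $n$ is odd the dihedral group $D_n$ has trivial centre, so it admits no central involution; hence $n$ must be even. In the standard presentation $D_n=\langle r,s\mid r^n=s^2=1,\ srs^{-1}=r^{-1}\rangle$ one then has $Z(D_n)=\{1,r^{n/2}\}$, so $\iota=r^{n/2}$ --- in particular $\iota$ is a rotation. (When $n=2$ the group $D_2$ is abelian and the conclusion already follows from the abelian case of the previous subsection; for instance $K$ then contains two distinct imaginary quadratic subfields, whence $n_K=1$ by Lemma~\ref{lm:E}(2). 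So we may assume $n\ge 4$ below.)

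The second step is to estimate the set $S=\{\sigma\in G:\iota\notin\langle\sigma\rangle\}$ that enters Lemma~\ref{lm:density}. Each of the $n$ reflections $sr^i$ $(0\le i\le n-1)$ generates the order-$2$ subgroup $\langle sr^i\rangle=\{1,sr^i\}$, which cannot contain the rotation $\iota=r^{n/2}\neq 1$; thus all $n$ reflections lie in $S$. Moreover $1\in S$, since $\langle 1\rangle=\{1\}\not\ni\iota$. Hence $|S|\ge n+1>n=|G|/2$, and the inequality is strict, so Lemma~\ref{lm:density} gives $n_K=1$. As $K$ is a field, $r=1$ in Proposition~\ref{formula}, and therefore $\tau(T)=2^{1}/n_K=2$. (A finer analysis would tell exactly which rotations belong to $S$, but only the crude bound is needed.)

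The only part that demands attention is the elementary group theory of $D_n$: showing that $\iota$ is forced to be $r^{n/2}$ --- which simultaneously yields the parity statement that $n$ is even --- and that no cyclic subgroup generated by a reflection can contain $\iota$. Once these are established, Lemma~\ref{lm:density} together with Proposition~\ref{formula} closes the argument immediately, so I do not foresee any substantial obstacle.
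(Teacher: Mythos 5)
Your proof is correct and follows essentially the same route as the paper: identify $\iota$ as a central involution (forcing $n$ even), bound $|S|$ from below, and apply Lemma~\ref{lm:density} followed by Proposition~\ref{formula}. One small observation: the paper asserts the \emph{equality} $|S|=2m+1$ (writing $n=2m$), which is actually an undercount for general $m$ --- for instance when $n=6$ one finds $|S|=9$, since the rotations $r^2,r^4$ also lie in $S$ --- whereas your lower bound $|S|\ge n+1$ (the $n$ reflections plus the identity) is the clean statement and is all that is needed to invoke the strict-inequality clause of Lemma~\ref{lm:density}. The separate treatment of $n=2$ is harmless but unnecessary, since the bound $|S|\ge n+1>|G|/2$ works there too.
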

\begin{proof}
Write $D_n=\<t,s: t^n=s^2=1, s t s=t^{-1}\>$. One easily sees that the center $Z(D_n)=\{x\in \<t\>: x^2=1\}$ contains an element of order $2$ if and only if $n=2m$ is even. 
Since $\iota$ is central of order $2$ in $D_n$, $n$ is even.
In this case $|S|=2m+1$ and $n_K=1$ by Lemma~\ref{lm:density}. Therefore, $\tau(T)=2$. \qed
\end{proof}

\begin{rem}\label{rem:density}
  The criterion in Lemma~\ref{lm:density} does not help to compute $\tau(T)$ for the case $G=Q_8$ or $G=\Z/2^{n} \Z$. However, these cases have been treated in Propositions~\ref{prop:tauQ8} and \ref{LiRu}, respectively. 
\end{rem}

\section{Some non-simple CM cases}
\label{sec:NS} 

Keep the notation in Section~\ref{sec:P.3}.
Write $N_i:=\Gal(K_i/K_i^+)=\<\iota_i\>$ with involution $\iota_i$ on $K_i/K_i^+$ and $\iota_i^*\in N_i^{\vee}=\Hom(N_i, \Q/\Z)$ for the unique non-trivial element, that is $\iota_i^*(\iota)=1/2 \mod \Z$. Fix a Galois splitting field $L$ of $T=T^{K,\Q}$ and put $G=\Gal(L/\Q)$. 
For a number field $F$, denote by $\Sigma_F:=\Hom_{\Q}(F,\C)=\Hom_{\Q}(F,\Qbar)$ the set of embeddings of $F$ into $\C$. The Galois group $G_\Q:=\Gal(\Qbar/\Q)$ acts on $\Sigma_F$ by $\sigma\cdot \phi=\sigma\circ \phi$ for $\sigma\in G_\Q$ and $\phi\in \Sigma_F$. 
Let $\Phi_i$ be a CM type of $K_i/K_i^+$, that is, $\Phi_i \cap (\Phi_i \circ \iota_i)=\emptyset$ and $\Phi_i \cup (\Phi_i \circ \iota_i)=\Sigma_{K_i}$, 
and for each $g\in G$, set $\Phi_i(g):=\{\phi\in \Phi_i: g \phi\not\in \Phi_i\}.$

\begin{lemma}\label{lm:H1LCM}
   Let $K$ be a CM algebra and $T=T^{K,\Q}$ the associated CM torus over $\Q$. Then the map
\[ \bigoplus_{i=1}^{r} \Ver^{\vee}_{G,N_i}\colon \bigoplus_{i=1}^{r} N_i^{\vee} \to G^{\rm ab, \vee}=\Hom(G,\Q/\Z) \]   
sends each element $\sum_{i=1}^{r} a_i \iota_i^*$, for $a_i\in \Z$, to the element $f=\sum_{i=1}^{r} a_i f_i$, where $f_i\in \Hom(G,\Q/\Z)$ is the function on $G$ given by $f_i(g)=|\Phi_i(g)|/2 \mod \Z$. 
\end{lemma}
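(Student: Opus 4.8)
The plan is to reduce everything, by additivity of $\bigoplus \Ver^\vee_{G,N_i}$ and the fact that $N_i^\vee=\Z/2\Z$ is generated by $\iota_i^*$, to the single identity $\Ver^\vee_{G,N_i}(\iota_i^*)(g)=|\Phi_i(g)|/2 \bmod \Z$ for each $i$ and each $g\in G$; since the left-hand side is visibly a homomorphism $G\to\Q/\Z$, this simultaneously shows $f_i\in\Hom(G,\Q/\Z)$. For this I would start from the explicit double-coset description of the relative transfer $\Ver_{G,N_i}=\Ver_{G,\widetilde N_i/H_i}$ proved in Section~\ref{sec:T.4} (here $H_i=\Gal(L/K_i)$, $\widetilde N_i=\Gal(L/K_i^+)$, and $\widetilde\iota_i\in\widetilde N_i$ is a fixed lift of $\iota_i$): choosing representatives $x_1,\dots,x_s$ of $\langle g\rangle\backslash G/\widetilde N_i$, with $f_j$ the length of the $\langle g\rangle$-orbit of $x_j\widetilde N_i$ in $G/\widetilde N_i$ and $m_j\in\{0,1\}$ determined by $g^{f_j}x_jH_i=x_j\widetilde\iota_i^{\,m_j}H_i$, one has $\Ver^\vee_{G,N_i}(\iota_i^*)(g)=(\sum_j m_j)/2 \bmod \Z$. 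Hence it suffices to prove $|\Phi_i(g)|\equiv\sum_j m_j\pmod 2$.

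Next I would install the standard dictionary between $\Sigma_{K_i}$ and cosets. After fixing extensions of embeddings to $L$, the set $\Sigma_{K_i}$ is identified $G$-equivariantly with $G/H_i$, with $G$ acting by left translation, and under this identification complex conjugation $\phi\mapsto\overline\phi=\phi\circ\iota_i$ becomes right translation $c:\sigma H_i\mapsto\sigma\widetilde\iota_i H_i$; this is well defined because $H_i\triangleleft\widetilde N_i$, it commutes with the $\langle g\rangle$-action, and it is fixed-point-free, so its orbits on $\Sigma_{K_i}$ are exactly the complex-conjugate pairs of embeddings. The double cosets $\langle g\rangle\backslash G/\widetilde N_i$ are precisely the $\langle g\rangle$-orbits on $\Sigma_{K_i}/\langle c\rangle$, so each one corresponds either to (a) a single $\langle g\rangle$-orbit $O\subseteq\Sigma_{K_i}$ with $cO=O$, or to (b) a pair $\{O,cO\}$ of distinct $\langle g\rangle$-orbits. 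In case (b) one checks $f_j=|O|$ and $g^{f_j}$ fixes $x_jH_i$, so $m_j=0$; in case (a) one has $|O|=2f_j$, and since $c$ is a fixed-point-free involution commuting with the cyclic generator $g$ of $O$, it must act on $O$ as $g^{f_j}$, whence $g^{f_j}x_jH_i=c(x_jH_i)$ and $m_j=1$. Therefore $\sum_j m_j$ equals the number of type-(a) orbits.

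It then remains to show $|\Phi_i(g)|\equiv\#\{\text{type-(a) orbits}\}\pmod 2$, which I would prove by computing the contribution $\#\{\phi\in\Phi_i\cap\Omega:g\phi\notin\Phi_i\}$ of each $\langle g,c\rangle$-orbit $\Omega$ separately. Picking a base point and recording $\Phi_i\cap\Omega$ as a $0$--$1$ sequence around the relevant cycle, the contribution becomes the number of ``falling edges'' of a cyclic binary sequence: for a type-(b) orbit the sequence has length $|O|$ and the contribution equals its total number of sign changes, hence is even; for a type-(a) orbit the relation $c=g^{f_j}$ on $O$ translates into the antiperiodicity $\epsilon_{k+f_j}=1-\epsilon_k$ of a length-$2f_j$ sequence, and then $\#\{\text{falling edges}\}=f_j-\sum_k\epsilon_k\epsilon_{k+1}$ together with the parity relation $\sum_k\epsilon_k\epsilon_{k+1}\equiv f_j+1\pmod 2$ forces the contribution to be odd. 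Summing over $\Omega$ gives $|\Phi_i(g)|\equiv\sum_j m_j\pmod 2$, hence the formula for $\iota_i^*$, and the general case $\sum_i a_i\iota_i^*$ follows by linearity. The main obstacle is exactly this last combinatorial parity lemma for antiperiodic cyclic $0$--$1$ sequences; everything else is bookkeeping, where the only real care is needed in matching double cosets with $\langle g,c\rangle$-orbits and in verifying the values of $f_j$ and $m_j$.
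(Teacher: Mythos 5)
Your proof is correct, but it takes a genuinely different route from the paper's. The paper's proof exploits a clever freedom in the definition of the transfer: since $\Ver_{G,N_i}$ is independent of the choice of section $\wt\varphi:X_i=G/\wt N_i\to G$, the authors pick $\wt\varphi$ so that the induced section $\varphi:X_i\to\wt X_i=G/H_i\cong\Sigma_{K_i}$ has image exactly the CM type $\Phi_i$. With this choice, $\Ver_{G,N_i}(g)=\iota_i^{\,n_i(g)}$ where $n_i(g)=\#\{x\in X_i: n^{\wt\varphi}_{g,x}\notin H_i\}$, and a direct unwinding of the definition of $n^{\wt\varphi}_{g,x}$ gives a natural bijection $\{\phi\in\Phi_i:g\phi\notin\Phi_i\}\isoto\{x:n^{\wt\varphi}_{g,x}\notin H_i\}$, yielding $n_i(g)=|\Phi_i(g)|$ on the nose, with no parity analysis. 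You instead run the computation through the double-coset form of the transfer (the unnumbered lemma after Remark~\ref{rm:Hnormal}, where you correctly read $f_j$ as the orbit length in $G/\wt N_i$ rather than in $G/H_i$, silently fixing a typo there), classify $\langle g\rangle$-double cosets by whether they are a single conjugation-stable $\langle g\rangle$-orbit in $\Sigma_{K_i}$ or a conjugate pair, verify $m_j\in\{0,1\}$ accordingly using $c=g^{f_j}$ on the antiperiodic orbits, and close the gap with a parity count of falling edges in cyclic $0$--$1$ sequences. Your calculation is correct (in particular $\sum_{k\bmod 2f_j}\epsilon_k\epsilon_{k+1}\equiv f_j+1\pmod 2$ does hold for antiperiodic sequences, and the cyclic sign-change count for paired orbits is even), but it only establishes the identity modulo $2$ rather than as an equality of integers; that is enough here since the target is $\tfrac12\Z/\Z$, but it is worth being aware the paper proves the stronger statement $n_i(g)=|\Phi_i(g)|$. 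The trade-off is that your argument is more mechanical and avoids having to notice the good section, at the cost of the orbit case analysis and the combinatorial lemma; the paper's argument is shorter and structurally cleaner because all the combinatorics is absorbed into the choice of $\wt\varphi$.
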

\begin{proof}
We first describe the transfer map $\Ver_{G,N_i}: G\to N_i$.
We may regard the CM fields $K_i$ as subfields of $\Qbar$, and put $\wt X_i:=G/H_i=\Sigma_{K_i}$ and $X_i:=G/\wt N_i=\Sigma_{K_i^+}$. Fix a section $\wt \varphi:X_i \to G$ such that the induced section $\varphi: X_i \to \wt X_i$ has image $\Phi_i$. Since $N_i=\wt N_i/H_i$ is abelian, modulo $D(\wt N_i) H_i$ is the same as modulo $H_i$. Following the definition of $\Ver_{G,N_i}$, for each $g\in G$,
\[ \Ver_{G,N_i}(g)=\prod_{x\in X_i} n^{\wt \varphi}_{g,x} \mod H_i=\iota_i^{n_i(g)}\in N_i, \]
where the element $n^{\wt \varphi}_{g,x}\in \wt N_i$ is defined in Section~\ref{sec:T.4} and $n_i(g):=|\{x\in X_i: n^{\wt \varphi}_{g,x}\not\in H_i\}|$. 
Let $f_i:=\Ver_{G,N_i}^\vee(\iota_i^*)$. 
Then (cf.~Lemma~\ref{lm:ver-dual})
\[ f_i(g)=\iota_i^*(\Ver_{G,N_i} (g))=\iota_i^*(\iota_i^{n_i(g)})=n_i(g)/2 \in \Q/\Z. \]
Thus, it remains to show $n_i(g)=|\Phi_i(g)|$. Let
$\wt \Phi_i:=\wt \varphi(\Sigma_{K_i^+})$. Then we have bijections $\wt \Phi_i\isoto \Phi_i \isoto \Sigma_{K_i^+}$. Let $x\in \Sigma_{K_i^+}$, and let $\wt \phi\in\wt \Phi_i$ and $\phi\in \Phi_i$ be the corresponding elements. Put 
$\wt \phi':=\wt \varphi (gx)$ and $\phi'=\varphi(gx)$, the image of $\wt \phi'$. Then $g\wt \phi$ and $\wt \phi'$ are elements lying over $gx$ and we have $g \wt \phi=\wt \phi' {n}^{\wt \varphi}_{g,x}$. So $g\phi=\phi'$ if and only if ${n}^{\wt \varphi}_{g,x}\in H_i$. On the other hand, since $\phi'\in \Phi_i$ and two elements $g \phi$ and $\phi'$ are lying over the same element $gx$, we have
\[ g\phi\not \in \Phi_i \iff g \phi\neq \phi' \iff {n}^{\wt \varphi}_{g,x}\not\in H_i. \]
Therefore, the bijection $\Phi_i\isoto \Sigma_{K_i^+}$ gives the bijection of subsets:
\[ \{\phi\in \Phi_i: g\phi \not \in \Phi_i\}\isoto \{x\in \Sigma_{K_i^+}: {n}^{\wt \varphi}_{g,x}\not\in H_i\}. \]
This shows the desired equality $|\Phi_i(g)|=n_i(g)$.
\qed
\end{proof}

By Proposition~\ref{prop:H1L} and Lemma~\ref{lm:H1LCM}, we give an independent proof of the following result of Li and R\"ud \cite[Proposition A.11]{achter-altug-gordon}.

\begin{cor}\label{cor:A.11}
  Let $K$ and $T$ be as in Lemma~\ref{lm:H1LCM} and $\Lambda=X(T)$ be the character group of $T$. Then 
\begin{equation}\label{eq:A.11}
H^1(\Lambda)\simeq \left \{(a_i)\in \{\pm 1\}^r: \sum_{1\le i \le r, a_i=-1} |\Phi_i(g)| \in 2\Z, \ \forall\, g\in G \right \}.    
\end{equation}
\end{cor}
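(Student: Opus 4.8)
The plan is to combine Proposition~\ref{prop:H1L}(2) with the explicit description of the transfer maps obtained in Lemma~\ref{lm:H1LCM}. By Proposition~\ref{prop:H1L}(2), we have the canonical isomorphism
\[
H^1(\Lambda)\simeq \Ker\left(\sum_i \Ver_{G,N_i}^\vee: \bigoplus_i N_i^\vee \to G^{\mathrm{ab},\vee}\right).
\]
Since each $N_i=\langle\iota_i\rangle$ has order $2$, we have $N_i^\vee\simeq \Z/2\Z$ generated by $\iota_i^*$, so $\bigoplus_i N_i^\vee$ is parametrized by tuples $(a_i)_i$ with $a_i\in\{0,1\}$ (equivalently, writing the $\pm 1$ convention, by $(a_i)\in\{\pm1\}^r$ with $a_i=-1$ marking the nontrivial coordinate and $a_i=1$ the trivial one). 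An element $\sum_i a_i\iota_i^*$ lies in the kernel precisely when its image in $\Hom(G,\Q/\Z)$ is the zero function.

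The second step is to unwind exactly when that image vanishes. By Lemma~\ref{lm:H1LCM}, the image of $\sum_i a_i\iota_i^*$ is the function $f=\sum_i a_i f_i$ where $f_i(g)=|\Phi_i(g)|/2\bmod\Z$. Restricting attention to the coordinates with $a_i\equiv 1\pmod 2$, i.e.\ the index set $I(a):=\{i: a_i=-1\}$ in the $\pm1$ notation, the condition $f(g)=0$ in $\Q/\Z$ for every $g\in G$ becomes
\[
\sum_{i\in I(a)}\frac{|\Phi_i(g)|}{2}\equiv 0\pmod\Z\quad\text{for all }g\in G,
\]
which is exactly the statement $\sum_{i\in I(a)}|\Phi_i(g)|\in 2\Z$ for all $g\in G$. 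This yields the identification \eqref{eq:A.11} verbatim. I would note in passing that this condition is independent of the choice of the splitting field $L$ and of the choices of CM types $\Phi_i$: it must be, since the left side $H^1(\Lambda)$ is intrinsic (Remark~\ref{rem:indepXT}, Remark~\ref{rem:indep}); changing $\Phi_i$ to another CM type alters $|\Phi_i(g)|$ only by a quantity that is even for all $g$ when summed over the relevant index set, so the parity condition is unaffected.

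There is essentially no obstacle here: the statement is a direct corollary, and the only thing to be careful about is bookkeeping between the two notational conventions for $N_i^\vee$ (the ``$a_i\in\Z$'' convention of Lemma~\ref{lm:H1LCM} versus the ``$(a_i)\in\{\pm1\}^r$'' convention in the corollary), and the translation of the congruence $n_i(g)/2\equiv 0$ in $\Q/\Z$ into the integral parity statement $|\Phi_i(g)|\in 2\Z$. So the proof is simply: apply Proposition~\ref{prop:H1L}(2), substitute the formula for $\Ver_{G,N_i}^\vee(\iota_i^*)$ from Lemma~\ref{lm:H1LCM}, and read off the kernel.
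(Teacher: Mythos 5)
Your proposal is correct and takes essentially the same route as the paper: identify $H^1(\Lambda)$ with $\Ker\bigl(\sum_i \Ver_{G,N_i}^\vee\bigr)$ via Proposition~\ref{prop:H1L}(2), substitute the formula $f_i(g)=|\Phi_i(g)|/2 \bmod \Z$ from Lemma~\ref{lm:H1LCM}, and read off the kernel as the stated parity condition. The only difference is your extra remark about independence from the CM types, which is fine but not part of the paper's argument.
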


\begin{proof}
Indeed after making the identity $\oplus_{i=1}^{r} N_i^\vee\simeq (\Z/2\Z)^r$ with $\{\pm 1\}^r$, by Lemma~\ref{lm:H1LCM} the map $\oplus_{i=1}^{r} \Ver^\vee_{G,N_i}\colon \oplus_{i=1}^{r} N_i^\vee=\{\pm 1\}^r\to \Hom(G,\Q/\Z)$ sends $(a_i)$ to the function $f$ with $f(g)=\sum_{1\le i\le r, a_i=-1} |\Phi_i(g)|/2 \mod \Z$. Thus, $f=0$ precisely when 
$\sum_{1\le i \le r, a_i=-1} |\Phi_i(g)| \in 2\Z, \ \forall\, g\in G$, and \eqref{eq:A.11} follows from Proposition~\ref{prop:H1L}. \qed
\end{proof}


\begin{prop}
\label{prop:nKH2Z}
  Let $K$ be a CM algebra and $T$ the associated CM torus over $\Q$. Then there is an isomorphism $\A^\times/ \Q^\times N(T(\A))  \simeq H^2(\Z)'^\vee$.
\end{prop}
\begin{proof}
By \cite[Theorem 1.2.9]{rosengarten}, we have the following commutative diagram with row exact sequence
\def\pro{\rm pro}
\begin{equation}
\begin{CD}
0 @>>> T(\Q)_{\pro} @>>> T(\A)_{\pro} @>>> H^2(\Q, \wh T)^\vee @>>> \Sha^1(T) @>>> 0 \\ 
@. @VV{N}V @VV{N}V @VV{{\wh N}^\vee}V @VVV \\
0 @>>> (\Q^\times)_{\pro} @>>> (\A^\times)_{\pro} @>>> H^2(\Q, \Z)^\vee @>>> 0, \\ 
\end{CD}    
\end{equation}
where $A_{\pro}$ denotes the profinite completion of an abelian group $A$. It follows from the Poitou-Tate duality and class field theory that
\begin{equation}
\begin{tikzcd}
\qquad (T(\A)/T(\Q))_{\pro} \arrow[twoheadrightarrow]{r} \arrow{d}{N} & (H^2(\Q, \wh T)/\Sha^2(\Q,\wh T))^\vee \arrow{d}{{\wh N}^\vee} \\ 
(\A^\times/\Q^\times)_{\pro} \arrow{r}{\sim} & H^2(\Q,\Z)^\vee 
\end{tikzcd}
\end{equation} 
By definition $H^2(\Q,\Z)'=\Ker \left(\wh N: H^2(\Q,\Z) \to H^2(\Q, \wh T)/\Sha^2(\Q,\wh T) \right)$, so we have
\[ \Coker N=(T(\A)/T(\Q) N(T(\A)))_{\pro} \isoto \Coker \wh N^\vee=H^2(\Q,\Z)'^\vee. \]
Since $T(\A)/T(\Q) N(T(\A))$ is finite, it is equal to its profinite completion. This proves the proposition. \qed 
\end{proof}
\begin{remark}
Proposition~\ref{prop:nKH2Z} gives a cohomological interpretation of the group $\A^\times/ \Q^\times N(T(\A))$. This reminds the main theorem $\A_k^\times/N_{K/k}(\A_K^\times) k^\times \simeq H^2(G,\Z)^\vee$ of class field theory, where $K/k$ is Galois with Galois group $G$. 
\end{remark}

\begin{lemma}\label{lm:prod}
  Let $K_1$ be a CM field and $K:=K_1^r$ be the $r$-copies of $K_1$. Then
  $\tau(T^{K,\Q})=2^{r-1}\cdot \tau(T^{K_1,\Q})$. 
\end{lemma}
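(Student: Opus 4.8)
The plan is to reduce the statement, via Ono's formula in the shape of Proposition~\ref{formula}(1), to a comparison of global norm indices. Since $\tau(T^{K,\Q})=2^r/n_K$ and $\tau(T^{K_1,\Q})=2/n_{K_1}$, the identity $\tau(T^{K,\Q})=2^{r-1}\tau(T^{K_1,\Q})$ is equivalent to $n_K=n_{K_1}$, and this is what I would establish.

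First I would unwind the definition of $T=T^{K,\Q}$ for $K=K_1^r$. Here $T^K=(T^{K_1})^r$, $T^{K^+}=(T^{K_1^+})^r$, the norm map is $N=N_{K/K^+}=\prod_{i=1}^r N_{K_1/K_1^+}$, and $\Gm\embed T^{K^+}$ is the diagonal embedding, so
\[
 T^{K,\Q}(\A)=\bigl\{(x_1,\dots,x_r)\in T^{K_1}(\A)^r : N_{K_1/K_1^+}(x_1)=\cdots=N_{K_1/K_1^+}(x_r)\in\A^\times\bigr\},
\]
and for such a tuple $N\bigl((x_1,\dots,x_r)\bigr)=N_{K_1/K_1^+}(x_1)\in\A^\times$.

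The key step is then to show $N(T^{K,\Q}(\A))=N(T^{K_1,\Q}(\A))$ as subgroups of $\A^\times$. For the inclusion $\supseteq$, note that the diagonal map $\Delta:T^{K_1,\Q}\embed T^{K,\Q}$, $x\mapsto(x,\dots,x)$, is well defined (if $N_{K_1/K_1^+}(x)\in\Gm$ then $N(\Delta x)$ lies in the diagonal copy of $\Gm$) and satisfies $N\circ\Delta=N_{K_1/K_1^+}$; hence $N(T^{K,\Q}(\A))\supseteq N(T^{K_1,\Q}(\A))$. For the reverse inclusion, by the description above any tuple in $T^{K,\Q}(\A)$ has first coordinate $x_1\in T^{K_1,\Q}(\A)$ with $N$-image $N_{K_1/K_1^+}(x_1)\in N(T^{K_1,\Q}(\A))$. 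Consequently $n_K=[\A^\times:N(T^{K,\Q}(\A))\cdot\Q^\times]=[\A^\times:N(T^{K_1,\Q}(\A))\cdot\Q^\times]=n_{K_1}$, and therefore $\tau(T^{K,\Q})=2^r/n_{K_1}=2^{r-1}\,(2/n_{K_1})=2^{r-1}\tau(T^{K_1,\Q})$.

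There is no serious obstacle; the only care needed is the bookkeeping identifying $T^{K,\Q}(\A)$ with the $r$-fold fibre product of $T^{K_1}(\A)$ over $\A^\times$ and checking that $\Delta$ lands in $T^{K,\Q}$. As a consistency check, when $K_1$ contains an imaginary quadratic field Lemma~\ref{lm:E}(1) gives $n_{K_1}\in\{1,2\}$, matching $n_K\in\{1,2\}$, and when $K_1$ contains two distinct imaginary quadratic fields $n_{K_1}=1=n_K$.
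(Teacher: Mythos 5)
Your proposal is correct and follows essentially the same route as the paper: reduce via Proposition~\ref{formula}(1) to showing $n_K=n_{K_1}$, then establish $N(T^{K,\Q}(\A))=N(T^{K_1,\Q}(\A))$ — the paper does this by writing $N(T^{K,\Q}(\A))=\A^\times\cap N(\A_K^\times)=\A^\times\cap N_{K_1/K_1^+}(\A_{K_1}^\times)$, whereas you verify the two inclusions directly via the diagonal map and projection onto the first factor, which is the same computation phrased a little more explicitly.
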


\begin{proof}
  Observe that $N(T^{K,\Q}(\A))=N(\A_K^\times) \cap \A^\times$, where $N=N_{K/K^+}$. To see this, the inclusion $\subseteq$ is clear. If $x\in N(\A_K^\times) \cap \A^\times$, then $x=N(y)$ for some $y\in \A_K^\times$. By definition $y\in T(\A)$, and hence $x\in N(T(\A))$. This verifies the other inclusion. It follows that if $K=\prod_{i=1}^r K_i$ is a product of CM fields $K_i$, then
\[ N(T^{K,\Q}(\A))=\A^\times \bigcap^r_{i=1} N_{K_i/K_i^+}(\A_{K_i}^\times) \]  
in the sense that the right hand side consists of all elements $x\in \A^\times$ which are contained in $N_{K_i/K_i^+}(\A_{K_i}^\times)$ via the embeddings $\A^\times \embed \A_{K_i^+}^\times$ for all $i$. Thus, if $K_i=K_1$ for all $i$, then
$N(T^{K,\Q}(\A))=\A^\times \cap  N_{K_1/K_1^+}(\A_{K_1}^\times)=N(T^{K_1,\Q}(\A))$ and hence $n_K=n_{K_1}$. By Proposition~\ref{formula}, $\tau(T^{K,\Q})=2^{r}/n_K=2^{r-1}\cdot 2/n_{K_1}= 2^{r-1}\cdot \tau(T^{K_1})$. \qed  
\end{proof}

\begin{cor}\label{cor:range}
  For any integer $n\ge 0$, there exists a CM torus $T$ over $\Q$ such that
  $\tau(T)=2^n$.
\end{cor}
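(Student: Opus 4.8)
The plan is to derive Corollary~\ref{cor:range} from Lemma~\ref{lm:prod} together with the cyclic case of Proposition~\ref{LiRu}, so that essentially no new computation is needed. First I would record the base case $n=0$: let $K_1$ be any imaginary quadratic field, say $K_1=\Q(i)$. It is a CM field with $K_1^+=\Q$, and its Galois group $\Gal(K_1/\Q)\simeq \Z/2\Z$ is cyclic, so Proposition~\ref{LiRu}(3) gives $\tau(T^{K_1,\Q})=1$; equivalently, $n_{K_1}=2$ in the notation of Proposition~\ref{formula}.

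Next, given an integer $n\ge 0$, set $K:=K_1^{\,n+1}$, the $\Q$-algebra given by the product of $n+1$ copies of $K_1$. Then $K$ is a CM algebra with exactly $r=n+1$ components and $K^+=\Q^{\,n+1}$, so $T:=T^{K,\Q}$ is a CM torus over $\Q$. Applying Lemma~\ref{lm:prod} with this $K_1$ and with $r=n+1$ copies yields
\[
\tau(T)=\tau(T^{K_1^{\,n+1},\Q})=2^{(n+1)-1}\cdot \tau(T^{K_1,\Q})=2^{n}\cdot 1=2^{n},
\]
which is the desired conclusion. Equivalently, one may argue directly through Proposition~\ref{formula}: by the proof of Lemma~\ref{lm:prod} one has $N(T(\A))=\A^\times\cap N_{K_1/K_1^+}(\A_{K_1}^\times)$, hence $n_K=n_{K_1}=2$, and therefore $\tau(T)=2^r/n_K=2^{n+1}/2=2^{n}$.

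There is no genuine obstacle here, since all the analytic and cohomological content has already been absorbed into Lemma~\ref{lm:prod} and into the vanishing $\tau(T^{K_1,\Q})=1$ for a cyclic CM field. The only points worth checking are bookkeeping: that $r=n+1\ge 1$ for every $n\ge 0$, so the algebra $K$ is well defined and nonzero, and that the edge case $n=0$ (where $K=K_1$) is consistent with the formula. It should be emphasized that this argument only settles the nonnegative powers of $2$; producing CM tori with $\tau(T)=2^{n}$ for $n<0$ requires $n_K>2^{r}$, which is exactly where the Landau-pair construction behind Theorem~\ref{I.3} enters and constitutes the genuinely hard part of that result.
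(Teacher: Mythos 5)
Your argument is correct and is essentially the paper's own proof: the paper likewise takes $K=E^{\,n+1}$ for an imaginary quadratic field $E$ and invokes Lemma~\ref{lm:prod} together with $\tau(T^{E,\Q})=1$. You simply make the last fact explicit (via Proposition~\ref{LiRu}(3)) and spell out the equivalent adelic bookkeeping through $n_K=n_{K_1}=2$, but the construction and the key lemma used are identical.
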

\begin{proof}
  Take $K=E^r$ with $r=n+1\ge 1$, where $E$ is an imaginary quadratic field. Then $\tau(T^{K,\Q})=2^{r-1} \cdot \tau(T^{E,\Q})=2^{n}$. \qed
\end{proof}

\begin{prop}\label{prop:prod-Q8}
  Suppose that CM fields $K_{1},\ldots,K_{r}$ satisfy the following:
  \begin{enumerate}
  \item[\emph{(a)}]$K_i$ is Galois over $\Q$ with group $G_i\simeq Q_8$ for any $i$;
  \item[\emph{(b)}] the Galois group $G=\Gal(L/\Q)$ of the compositum $L=K_1\cdots K_r$ over $\Q$ is isomorphic to $G_1\times \dots \times G_r$;
  \item[\emph{(c)}] every decomposition group of $G_i$ is cyclic for all $1\le i\le r$. 
  \end{enumerate}
  Then $\tau(T^{K,\Q})=(1/2)^r$. 
\end{prop}

\begin{proof}
By the condition (c) and Proposition \ref{prop:tauQ8}, one has
\begin{equation}\label{eq:tauT1/2}
\tau(T^{K_i,\Q})=1/2
\end{equation}
for all $1\leq i\leq r$. Moreover, the equality $[K_i:K_i^{+}]=2$ and the conditions (a), (b) and (c) imply that the CM fields $K_1,\ldots,K_{r}$ satisfies the assumptions in Proposition \ref{prop:product-cyclic-dec}. See also the conditions (i) and (ii) in Section \ref{sec:MC} and the beginning of Subsection \ref{subsec:tauT}. Hence the assertion follows from Proposition \ref{prop:product-cyclic-dec} and \eqref{eq:tauT1/2}. \qed
\end{proof}

\begin{thm}\label{thm:family-Q8}
For any positive integer $r$, there exist $Q_8$-CM fields $K_i$ for $1\le i \le r$ that satisfy the conditions (a), (b) and (c) in Proposition~\ref{prop:prod-Q8}.  
\end{thm}

We will give the proof in the next section. By Proposition~\ref{prop:prod-Q8} and Theorem~\ref{thm:family-Q8}, we prove the following result.

\begin{cor}\label{cor:main}
  For any integer $n$, there exists a CM torus $T$ over $\Q$ such that $\tau(T)=2^n$.
\end{cor}

\begin{rem}\label{rem:ker1}
  In \cite{kottwitz:jams1992} Kottwitz computed the Hasse-Weil zeta function of
 the moduli spaces $S_{K^p}$ of PEL-type. 
 Using the notation there, it is shown in Section 8 that the algebraic variety $S_{K^p}$ over the reflex field $E$ is a finite disjoint union of the canonical model of Shimura varieties associated to the Shimura datum $(G, h^{-1}, K^p)$ indexed by 
 $\ker^1(\Q,G):=\ker (H^1(\Q,G)\to \prod_{v\le \infty} H^1(\Q_v,G))$. In Case C and Case A with $n$ even, the set $\ker^1(\Q,G)$ is trivial and there is no difference between the moduli space $S_{K^p}$ and the canonical model of the Shimura variety in question. In Case A with $n$ odd, the set $\ker^1(\Q,G)$ is canonically isomorphic to $\ker^1(\Q,Z)$, where $Z$ is the kernel of the map $F^\times \times \Q^\times \to F_0^\times$ sending $(x,t)$ to $N_{F/F_0}(x)t^{-1}$ and $F$ is the center of the central simple $\Q$-algebra $B$ in the input PEL-datum. The $\Q$-torus $Z$ is exactly the CM torus associated to the CM field $F$ and $\ker^1(\Q,Z)$ is its Tate-Shafarevich group.  
\end{rem}

\begin{que} Is Proposition~\ref{LiRu} (2) still true if one drops the condition that $K/\Q$ is Galois?
\end{que}

\section{Construction of an effective family of $Q_8$-CM fields} \label{sec:Q8}

\subsection{Existence of $Q_8$-extensions of fields}

Let $k$ be a field of characteristic different from $2$, and denote by $\Br(k)$ the Brauer group of $k$. Then we define a pairing
\begin{equation*}
(\,,\,)_{k}\colon k^{\times}\times k^{\times}\rightarrow \Br(k)
\end{equation*}
by sending $(a,b)\in k^{\times}\times k^{\times}$ to the Brauer class of the quaternion algebra
\begin{equation*}
\left(\frac{a,b}{k}\right):=k\oplus k\alpha \oplus k\beta \oplus k\alpha\beta,
\end{equation*}
where $\alpha^{2}=a$, $\beta^{2}=b$ and $\beta \alpha=-\alpha\beta$. By definition, the pairing $(\,,\,)_{k}$ is symmetric, and the image of $(\,,\,)_{k}$ is contained in the $2$-torsion group of $\Br(k)$. 

For $a\in k^{\times}$, put
\begin{equation*}
k_{a}:=k[T]/(T^{2}-a). 
\end{equation*}
We denote by $N_{k_{a}/k}\colon k_{a} \rightarrow k$ the norm map of $k_{a}/k$. 

\begin{prop}[{\cite[Proposition 1.1.7]{MR3727161}}]\label{prop:quat}
Let $a,b\in k^{\times}$. Then the following are equivalent: 
\begin{enumerate}
\item[\emph{(1)}] $(a,b)_{k}=1$;
\item[\emph{(2)}] $a\in N_{k_{b}/k}(k_{b}^{\times})$;
\item[\emph{(3)}] $b\in N_{k_{a}/k}(k_{a}^{\times})$. 
\end{enumerate}
\end{prop}

The following is one of the most important key to prove Theorem \ref{thm:family-Q8}. 

\begin{thm}[{\cite[Theorem 4]{MR1080998}}]\label{thm:Kiming}
Let $E=k(\sqrt{a},\sqrt{b})$ be a biquadratic extension of $k$, where $a,b\in k^{\times}$. Then the following are equivalent:
\begin{enumerate}
\item[\emph{(1)}] there is a quadratic extension of $K/E$ such that $K/k$ is a $Q_{8}$-extension;
\item[\emph{(2)}] $(a,a)_{k}(b,b)_{k}(a,b)_{k}=1$. 
\end{enumerate}
\end{thm}

\subsection{Proof of Theorem \ref{thm:family-Q8}}

First, we recall the properties on the pairings $(\,,\,)_{0}:=(\,,\,)_{\Q}$ and $(\,,\,)_{v}:=(\,,\,)_{\Q_{v}}$ for all places $v$ of $\Q$. 

\begin{prop}\label{prop:norm}
Let $v$ be a place of $\Q$, $a,b\in \Q_{v}^{\times}$ and let $\Q_{v,a}:=\Q_v[T]/(T^2-a)$. 
\begin{enumerate}
\item[\emph{(1)}] If $v$ is infinite, then we have $(a,b)_{v}=1$ if and only if either $a$ or $b$ is positive. 
\item[\emph{(2)}] Assume that $v=\ell$ is a non-dyadic finite place. If $a,b\in \Z_{\ell}^{\times}$, then $(a,b)_{\ell}=1$. 
\item[\emph{(3)}] Under the assumption on $v$ in {\rm (2)}, if $a=\ell$ and $b\in \Z_{\ell}^{\times}$, then we have $(a,b)_{\ell}=\left(\frac{b}{\ell}\right)$. 
\item[\emph{(4)}] If $v=2$ and $a,b\in 1+4\Z_2$, then one has $(a,b)_{2}=1$. 
\end{enumerate}
\end{prop}

\begin{proof}
(1) This follows from Proposition \ref{prop:quat} and the equality $N_{\C/\R}(\C^{\times})=\R_{>0}$. 

(2) Since $\ell\neq 2$ and $a\in \Z_{\ell}^{\times}$, we have
\begin{equation*}
\Q_{\ell,a}\cong
\begin{cases}
\Ql \times \Ql,&\text{if }a\in (\Z_{\ell}^{\times})^{2};\\
\Q_{\ell^2},&\text{if }a\notin (\Z_{\ell}^{\times})^{2}. 
\end{cases}
\end{equation*}
Hence $N_{\Q_{\ell,a}/\Ql}(\Q_{\ell,a}^{\times})$ contains $\Z_{\ell}^{\times}$. Hence the assertion follows from Proposition \ref{prop:quat} and the assumption $b\in \Z_{\ell}^{\times}$. 

(3) Since $\ell$ is not equal to $2$, we have
\begin{equation*}
N_{\Ql(\sqrt{\ell})/\Q}(\Ql(\sqrt{\ell})^{\times})=\langle -\ell \rangle \times (\Z_{\ell}^{\times})^{2}. 
\end{equation*}
Therefore the assertion follows from Proposition \ref{prop:quat}. 

(4) Since $1+8\Z_2=(\Z_2^{\times})^{2}$, there is an isomorphism 
\begin{equation*}
\Q_{2,a}\cong
\begin{cases}
\Q_2 \times \Q_2, &\text{if }a\in (\Z_{2}^{\times})^{2};\\
\Q_{4}, &\text{if }a\notin (\Z_{2}^{\times})^{2}. 
\end{cases}
\end{equation*}
In particular, $N_{\Q_{2,a}/\Ql}(\Q_{2,a}^{\times})$ contains $\Z_{2}^{\times}$. Hence the assertion follows from $b\in 1+4\Z_2$ and Proposition \ref{prop:quat}. \qed
\end{proof}



The following two lemmas will be used later. 

\begin{lem}\label{lem:cm}
Let $E$ be a totally real field which is Galois over $\Q$. 
\begin{enumerate}
\item[\emph{(1)}] Let $K/E$ be a quadratic extension such that $K/\Q$ is Galois. Then $K$ is either totally real or CM. 
\item[\emph{(2)}] If there is a quadratic extension $K/E$ such that $K/\Q$ is Galois, then there is a totally imaginary quadratic extension $K'/E$ such that $K'/\Q$ is Galois and 
\begin{equation*}
\Gal(K'/\Q)\cong \Gal(K/\Q). 
\end{equation*}
\end{enumerate}
\end{lem}

\begin{proof}
(1) If $K$ is not totally real, then it is totally complex since $K/\Q$ is Galois. Fix an embedding $\varepsilon \colon K \hookrightarrow \C$, and let $\iota$ be the element of $\Gal(K/\Q)$ induced by the complex conjugation and $\varepsilon$. Then $\iota$ is the unique non-trivial element of $\Gal(K/E)\subset \Gal(K/\Q)$ since $E$ is totally real. On the other hand, the assumption that $E$ is Galois implies that $\Gal(K/E)$ is central in $\Gal(K/\Q)$. Hence $\iota$ is contained in the center of $\Gal(K/\Q)$, which implies that $K$ is a CM field. 

(2) By (1), we may assume that $K$ is totally real. Write $K=E(\sqrt{\alpha})$, where $\alpha \in E^{\times}$, and put $K':=E(\sqrt{-\alpha})$. Note that $K'$ corresponds to $\langle(\varepsilon,\iota)\rangle$ under the isomorphism
\begin{equation*}
\Gal(K(\sqrt{-1})/\Q)\cong \Gal(K/\Q)\times \Gal(\Q(\sqrt{-1})/\Q). 
\end{equation*}
Here $\varepsilon$ is the unique non-trivial element of $\Gal(K/E)$, and $\iota$ is the complex conjugation on $\Q(\sqrt{-1})$. Note that $\varepsilon$ is central in $\Gal(K/\Q)$. Then $K'$ is not totally real $K'$ is Galois over $\Q$, and hence it is CM by (1). Moreover, the composite
\begin{equation*}
\Gal(K/\Q)\xrightarrow{g\mapsto (g,\id_{\Q(\sqrt{-1})})}\Gal(K/\Q)\times \Gal(\Q(\sqrt{-1})/\Q) \cong \Gal(K(\sqrt{-1})/\Q)\rightarrow \Gal(K'/\Q)
\end{equation*}
is an isomorphism by the definition of $K'$. \qed
\end{proof}

For a number field $E$ which is finite Galois over $\Q$, we write for $\Ram(E)$ the set of prime numbers which ramify in $E$. 

\begin{lem}\label{lem:disjoint}
Let $r$ be a positive integer, and $K_{1},\ldots,K_{r}$ be number fields which are Galois over $\Q$. For each $i$, we denote by $E_{i}$ the maximal abelian subfield of $K_{i}$. Assume the following: 
\begin{enumerate}
\item[\emph{(i)}] $[K_{i}:E_{i}]$ is a prime number for any $i \ge 1$;
\item[\emph{(ii)}] $\mathrm{Ram}(E_{i})\cap \mathrm{Ram}(E_{j})=\emptyset$ if $i\neq j$. 
\end{enumerate}
Let $L$ be the compositum of $K_1,\ldots,K_r$. Then there is an isomorphism
\begin{equation*}
\Gal(L/\Q)\cong \prod_{i=1}^{r}\Gal(K_{i}/\Q). 
\end{equation*}
\end{lem}

\begin{proof}
We give a proof by induction on $r$. It is trivial if $r=1$. Next, assume that the assertion holds for $r-1$, that is, there is an isomorphism
\begin{equation}\label{eq:indh}
\Gal(L'/\Q)\cong \prod_{i=1}^{r-1}\Gal(K_i/\Q),
\end{equation}
where $L':=K_{1}\cdots K_{r-1}$. We first prove the equality
\begin{equation}\label{eq:Er}
    E_{r}\cap L'=\Q. 
\end{equation}
Since $E_{r}$ is abelian over $\Q$, it is contained in the maximal abelian subfield $E'$ of $L'$. On the other hand, the induction hypothesis \eqref{eq:indh} implies the equality $E'=E_1\cdots E_{r-1}$. Hence $E_{r}\cap L'$ is contained in $E_{r}\cap (E_1\cdots E_{r-1})$. However, we have $E_{r}\cap (E_1\cdots E_{r-1})=\Q$ by the assumption (ii) and the global class field theory, and hence \eqref{eq:Er} holds. 

Now we prove the equality $K_{r}\cap L'=\Q$, which gives the desired assertion. If $K_{r}\cap L'\neq \Q$, then we have $E_{r}\cap (K_{r}\cap L')=\Q$ and $E_{r}\subsetneq E_{r}\cdot(K_{r}\cap L')\subset K_{r}$ by \eqref{eq:Er}. Therefore $K_{r}$ is equal to the compositum of $E_{r}$ and $K_{r}\cap L'$ by the assumption (i). Consequently, there is an isomorphism
\begin{equation*}
\Gal(K_{r}/\Q)\cong \Gal(E_{r}/\Q)\times \Gal(K_{r}\cap L'/\Q). 
\end{equation*}
In particular, $[K_{r}\cap L':\Q]=[K_{r}:E_{r}]$ is a prime number, and hence $K_{r}/\Q$ is abelian. This contradicts the assumption (i), which implies the desired equality $K_{r}\cap L'=\Q$. \qed
\end{proof}

Let $\mathbf{L}$ be the set of unordered pairs of prime numbers $\{\ell,\ell'\}$ satisfying the following: 
\begin{equation*}
\ell \equiv \ell' \equiv 1\bmod 4,\quad \left(\frac{\ell'}{\ell}\right)=1. 
\end{equation*}

\begin{prop}\label{prop:Dirichlet}
For any positive integer $r$, there are $r$-pairs $\{\ell_1,\ell'_1\},\ldots,\{\ell_{r},\ell'_{r}\}$ in $\mathbf{L}$ such that 
\begin{equation*}
|\{\ell_1,\ldots,\ell_{r},\ell'_1,\ldots,\ell'_{r}\}|=2r. 
\end{equation*}
\end{prop}

\begin{proof}
By the Dirichlet prime number theorem, there are $r$ distinct prime numbers $\ell_1,\ldots,\ell_{r}$ which are congruent to $1$ modulo $4$. Moreover, the Dirichlet prime number theorem implies the existence of prime numbers $\ell'_1,\ldots,\ell'_{r}$ satisfying the following: 
\begin{equation*}
\begin{cases}
\ell'_{i}\equiv 1\bmod 4,\,\left(\frac{\ell'_{i}}{\ell_{i}}\right)=1&\text{if }i=1,\\
\ell'_{i}\equiv 1\bmod 4,\,\left(\frac{\ell'_{i}}{\ell_{i}}\right)=1,\,\ell'_{i}\notin \{\ell'_1,\ldots,\ell'_{i-1}\}&\text{if }i \geq 2. 
\end{cases}
\end{equation*}
Therefore the assertion holds. \qed
\end{proof}

For $\lambda:=\{\ell,\ell'\} \in \mathbf{L}$, put $K_{\lambda}^{+}:=\Q(\sqrt{\ell},\sqrt{\ell'})$. 

\begin{lem}\label{lem:biquad}
\emph{For any $\lambda \in \mathbf{L}$, the decomposition groups of $K_{\lambda}^{+}/\Q$ at all finite places are cyclic. }
\end{lem}

\begin{proof}
Write $\lambda=\{\ell,\ell'\}$. Let $v$ be a finite place of $K_{\lambda}^{+}$ which lies above a prime number $\ell_{0}$. If $\ell_{0}\notin \{\ell,\ell'\}$, then $K_{\lambda}^{+}/\Q$ is unramified at $v$, and hence the assertion holds for $v$. The assertion for $\ell_{0}=\ell$ follows from the assumption $\left(\frac{\ell'}{\ell}\right)=1$. Finally, in the case $\ell_{0}=\ell'$, the statement is a consequence of the equality $\left(\frac{\ell}{\ell'}\right)=\left(\frac{\ell'}{\ell}\right)=1$. \qed
\end{proof}

\begin{prop}\label{prop:exist-Q8}
For any $\lambda \in \mathbf{L}$, there is a CM field $K$ containing $K_{\lambda}^{+}$ such that $K/\Q$ is a $Q_{8}$-extension. 
\end{prop}

\begin{proof}
By Lemma \ref{lem:cm} (2), it suffices to prove the existence of $Q_8$-extension $K$ of $\Q$ containing $K_{\lambda}^{+}$. Write $\lambda=\{\ell,\ell'\}$. It is equivalent to the equality
\begin{equation*}
(\ell,\ell)_{0}(\ell',\ell')_{0}(\ell,\ell')_{0}=1, 
\end{equation*}
which is a consequence of Theorem \ref{thm:Kiming}. Since the image of the class $(\ell,\ell)_{0}(\ell',\ell')_{0}(\ell,\ell')_{0}$ in $\Br(\Q_v)$ is equal to $(\ell,\ell)_{v}(\ell',\ell')_{v}(\ell,\ell')_{v}$, by 
the Albert--Brauer--Hasse--Noether theorem~\cite[Chapter IX, Section 6, p.~195]{Lang-ANT} 
it is equivalent to prove the following for any place $v$ of $\Q$:
\begin{equation}\label{eq:pairing}
(\ell,\ell)_{v}(\ell',\ell')_{v}(\ell,\ell')_{v}=1.
\end{equation}

\textbf{Case 1.~$v$ is infinite. }
In this case, \eqref{eq:pairing} follows from Proposition \ref{prop:norm} (1) since $\ell$ and $\ell'$ are positive. 

\textbf{Case 2.~$v=\ell_{0}\notin \{2,\ell,\ell'\}$. }
The condition $\ell_{0}\notin \{\ell,\ell'\}$ derives that $\ell$ and $\ell'$ are units in $\Z_{\ell_{0}}$. Hence, since $\ell_{0} \neq 2$, one has
\begin{equation*}
(\ell,\ell)_{\ell_{0}}=(\ell',\ell')_{\ell_{0}}=(\ell,\ell')_{\ell_{0}}=1 
\end{equation*}
by Proposition \ref{prop:norm} (2). This implies equality (\ref{eq:pairing}). 

\textbf{Case 3.~$v=2$. }
Since $\ell \equiv \ell'\equiv 1\bmod 4$, Proposition \ref{prop:norm} (4) implies
\begin{equation*}
(\ell,\ell)_{2}=(\ell',\ell')_{2}=(\ell,\ell')_{2}=1.  
\end{equation*}
Hence \eqref{eq:pairing} holds. 

\textbf{Case 4.~$v=\ell$. }
The assumption $\ell \equiv 1\bmod 4$ is equivalent to the equality $\left(\frac{-1}{\ell}\right)=1$. Hence
\begin{equation*}
(\ell,\ell)_{\ell}=(\ell,-1)_{\ell}=1. 
\end{equation*}
On the other hand, the assumption $\left(\frac{\ell'}{\ell}\right)=1$ implies
\begin{equation*}
(\ell,\ell')_{\ell}=(\ell',\ell')_{\ell}=1,
\end{equation*}
which is a consequence of Proposition \ref{prop:norm} (2) and (3). Therefore we obtain the desired equality (\ref{eq:pairing}). 

\textbf{Case 5.~$v=\ell'$. }
Since $\ell'\equiv 1\bmod 4$ and
\begin{equation*}
\left(\frac{\ell}{\ell'}\right)=\left(\frac{\ell'}{\ell}\right)=1,
\end{equation*}
the assertion \eqref{eq:pairing} follows from the same argument as Case 4. \qed
\end{proof}

In the following, we give a proof of Theorem \ref{thm:family-Q8}. 
By Corollary \ref{cor:range}, we may assume $n=-r<0$. Take $\lambda_1=\{\ell_1,\ell'_1\},\ldots,\lambda_{r}=\{\ell_{r},\ell'_{r}\} \in \mathbf{L}$ satisfying
\begin{equation}\label{eq:primes}
|\{\ell_1,\ldots,\ell_{r},\ell'_1,\ldots,\ell'_{r}\}|=2r,
\end{equation}
which is possible by Proposition \ref{prop:Dirichlet}. Then, Proposition \ref{prop:exist-Q8} implies that there is a $Q_{8}$-CM field containing $K_{\lambda_{i}}^{+}$ for any $1\le i \le r$. 

The following immediately implies the desired assertion. 

\begin{thm}
Under the above notations, let $K_{\lambda_{i}}$ be a $Q_{8}$-CM field containing $K_{\lambda_{i}}^{+}$ for each $1\le i\le r$. Then the CM fields $K_{\lambda_1},\ldots,K_{\lambda_{r}}$ satisfy the conditions (a), (b) and (c) in Proposition \ref{prop:prod-Q8}. 
\end{thm}

\begin{proof}
From the definition of $K_{\lambda_{i}}$ for $1\leq i\leq r$, condition (a) holds.

We shall show that the assumptions (i) and (ii) in Lemma \ref{lem:disjoint} hold. By Lemma \ref{lm:Q8} (3), for any $1\le i\le r$, $K_{\lambda_{i}}^{+}$ is the maximal abelian subfield of $K_{\lambda_{i}}$ and $[K_{\lambda_{i}}:K_{\lambda_{i}}^{+}]=2$. In particular, assumption (i) holds. On the other hand, since $\Ram(K_{\lambda_{i}}^{+})=\lambda_{i}$ for any $1\leq i\leq r$, the condition \eqref{eq:primes} implies the equality $\Ram(K_{\lambda_{i}}^{+})\cap \Ram(K_{\lambda_{j}}^{+})=\emptyset$ for $i\neq j$. Hence we obtain the assumption (ii), which verifies 
condition (b). 

 Take a finite places $w$ of $K_{\lambda_{i}}$ and $v$ of $K_{\lambda_{i}}^{+}$ satisfying $w\mid v$. Then Lemma \ref{lem:biquad} implies the cyclicity of the decomposition group of $\Gal(K_{\lambda_{i}}^{+}/\Q)$ at $v$. Since $K_{\lambda_{i}}/\Q$ is a $Q_8$-extension, the decomposition group at $w$ is cyclic by Lemma \ref{lm:Q8} (2). Therefore condition (c) holds. \qed
\end{proof}

\section{Products of two linearly disjoint Galois CM fields} \label{sec:P2}

In this section we show the following result.

\begin{thm}\label{thm:prod-ld}
There are infinitely many CM algebras $K=K_1 \times K_2$ with linearly disjoint Galois CM fields $K_1$ and $K_2$ such that 
\begin{equation}\label{eq:prod-ld}
   \tau(T^{K,\Q})=\frac{1}{2} \prod_{i=1}^2 \tau(T^{K_i,\Q}). 
\end{equation}
\end{thm}

This theorem shows that the conclusion of Proposition~\ref{prop:product-cyclic-dec} is no longer true if one drops the cyclicity of decomposition groups of $G_i$ for all $i$. 
We shall use the notations in Section~\ref{subsec:tauT}. In particular, $L=K_1K_2$, $G:=\Gal(L/\Q)$, $G_i:=\Gal(K_i/\Q)$, 
$H_i:=\Gal(L/K_i)$, $\wt N_i:=\Gal(L/K_i^+)$ and $N_i=\Gal(K_i/K_i^+)$ for $i=1,2$.

First, we give a sufficient condition on $K=K_1\times K_2$ for which Theorem \ref{thm:prod-ld} holds. Let $\scrC$ and $\scrC_{i}$ be the sets of cyclic subgroups of $G$ and $G_i$ respectively. Then put
\begin{align*}
H^{2}(\Z)''&:=\{f\in G^{\vee}\mid f\!\mid_{D}\in {\rm Im}(\Ver_{D,\overline{\mathbf{D}}}^{\vee})\text{ for all }D\in \scrC \},\\
H^{2}(G_{i},\Z)''&:=\{f\in G_{i}^{\vee}\mid f\!\mid_{D'}\in {\rm Im}(\Ver_{D',D'\cap N_{i}}^{\vee})\text{ for all }D'\in \scrC_{i}\}. 
\end{align*}

\begin{lem}\label{lm:double-p}
If $G\cong G_1\times G_2$, then $H^{2}(\Z)''=H^{2}(G_1,\Z)''\times H^{2}(G_2,\Z)''$. 
\end{lem}

\begin{proof}
The proof is the same as Lemma \ref{lm:(a)(b)}. \qed
\end{proof}

We define a subgroup $D_{0}$ of $(\Z/4\Z \times \Z/2\Z)\times \Z/2\Z$ as follows: 
\begin{equation*}
D_{0}:=\< (\bar 1, \bar 0,\bar 1),(\bar 0 ,\bar 1,\bar 0)\>. 
\end{equation*}
Here we denote by $(\bar a, \bar b, \bar c)$ the element $(a\bmod 4, b\bmod 2, c\bmod 2)$ in $\Z/4\Z\times \Z/2\times \Z/2\Z$ for $a,b,c\in\Z$.
\begin{prop}\label{prop:neg1}
Assume that Galois CM fields $K_1$ and $K_2$ satisfy the following: 
\begin{enumerate}
\item[\emph{(i)}] $G_{1}\cong \Z/4\Z \times \Z/2\Z$, $N_{1}\cong \langle (\bar 2,\bar 0)\rangle\subset G_1$ and $G_{2}\cong \Z/2\Z$;
\item[\emph{(ii)}] $G\cong G_1\times G_2$, that is, $K_1$ and $K_2$ are linearly disjoint;
\item[\emph{(iii)}] $\scrD=\scrC \cup \{D_{0}\}$. 
\end{enumerate}
Put $K:=K_1\times K_2$. Then we have the following: 
\begin{equation*}
\tau(T^{K_1,\Q})=2, \quad \tau(T^{K_2,\Q})=1,\quad \tau(T^{K,\Q})=1. 
\end{equation*}
In particular, the equality \eqref{eq:prod-ld} holds.
\end{prop}

\begin{proof}
By definition, we obtain the following: 
\begin{gather*}
\widetilde{N}_{1}=(2\Z/4\Z \times \{\bar 0 \})\times \Z/2\Z,\quad H_{1}=\{(\bar 0, \bar 0 )\}\times \Z/2\Z,\\
\widetilde{N}_{2}=G,\quad H_{2}=(\Z/4\Z \times \Z/2\Z)\times \{\bar 0 \}. 
\end{gather*}
Moreover, Proposition \ref{prop:Sha2L-1} implies
\begin{equation*}
\tau(T^{K_i,\Q})=\frac{2}{|H^{2}(G_i,\Z)'|},\quad \tau(T^{K_1\times K_2,\Q})=\frac{4}{|H^{2}(\Z)'|}. 
\end{equation*}
Since $G_1$ is $D_0$ modulo $G_2$, by (iii) $G_1$ itself is a decomposition group of $G_1$. By this and that $G_1$ is not cyclic, one computes that $H^{2}(G_1,\Z)'=0$, which implies $\tau(T^{K_1,\Q})=2/1=2$. Moreover, the equality $\tau(T^{K_2,\Q})=1$ follows from Proposition \ref{LiRu} (2) as $[K_2:\Q]=2$. In particular, we obtain $|H^{2}(G_2,\Z)'|=2$, that is,
\begin{equation}\label{eq:H2G2p}
H^{2}(G_2,\Z)'=H^{2}(G_2,\Z)''=G_2^{\vee}. 
\end{equation}
On the other hand, for the equality $\tau(T^{K,\Q})=1$, it suffices to prove
\begin{equation}\label{eq:want1}
H^{2}(\Z)'=\{f\in G^{\vee}\mid f((2\Z/4\Z \times \Z/2\Z)\times \{\bar 0 \})=0\}. 
\end{equation}
By direct computation, we have
\begin{equation*}
H^{2}(G_1,\Z)''=\{f_1\in G_1^{\vee}\mid f(2\Z/4\Z \times \Z/2\Z)=0\}. 
\end{equation*}
Combining this equality, \eqref{eq:H2G2p} and Lemma \ref{lm:double-p}, one has
\begin{equation*}
H^{2}(\Z)'\subset H^{2}(\Z)''=H^{2}(G_1,\Z)''\times H^{2}(G_2,\Z)''=\{f\in G^{\vee}\mid f((2\Z/4\Z \times \Z/2\Z)\times \{\bar 0 \})=0\}. 
\end{equation*}
For another inclusion, it suffices to prove
\begin{equation}\label{eq:VerD0}
{\rm Im}(\Ver_{D_{0},\overline{\mathbf{D}}_{0}}^{\vee})=\{f\in D_{0}^{\vee}\mid f((2\Z/4\Z \times \Z/2\Z)\times \{\bar 0 \})=0\}. 
\end{equation}
Recall that $\Ver_{D_{0},\overline{\mathbf{D}}_{0}}=(\Ver_{D_{0},\overline{D}_{0,i}})_{1\leq i\leq 2}$ and
\begin{equation*}
\Ver_{D_{0},\overline{D}_{0,i}}=\Ver_{D_{0}/D_{0}\cap H_{i},\overline{D}_{0,i}}\circ \pi_{i},
\end{equation*}
where $\pi_{i}\colon D_{0}\rightarrow D_{0}/D_{0}\cap H_{i}$ is the canonical surjection. Since one has
\begin{equation*}
D_{0,1}=D_{0}\cap \widetilde{N}_{1}=2\Z/4\Z \times \{\bar 0 \}\times \Z/2\Z,\quad D_{0}\cap H_{1}=\{0\}, 
\end{equation*}
we obtain that $D_{0}/D_{0}\cap H_{1}$ is not cyclic and $\overline{D}_{0,1}:=D_{0,1}/D_{0}\cap H_{1} \cong \Z/2$. Hence Proposition {3.3} implies $\Ver_{D_{0},\overline{D}_{0,1}}=0$. On the other hand, we have $\Ver_{D_{0},\overline{D}_{0,2}}=\pi_{2}$ by $D_{0,2}:=D_{0}\cap \widetilde{N}_{2}=D_{0}$. Consequently, the homomorphism $\Ver_{D_{0},\overline{\mathbf{D}}_{0}}$ can be written as the composite
\begin{equation*}
D_{0}\xrightarrow{\pi_{2}} D_{0}/D_{0}\cap H_{2}=\overline{D}_{0,2}\xrightarrow{g_2\mapsto (0,g_2)}\overline{D}_{0,1}\times \overline{D}_{0,2}. 
\end{equation*}
Therefore, \eqref{eq:VerD0} follows from the equality $D_{0}\cap H_{2}=(2\Z/4\Z \times \Z/2\Z)\times \{\bar 0 \}$. \qed
\end{proof}

Now we construct pairs of CM fields $K_1,K_2$ satisfying Proposition \ref{prop:neg1}. 

\begin{lem}\label{lm:quartic}
Let $\ell$ be a prime number which is congruent to $1$ modulo $4$, and denote by $K_{(\ell)}$ the unique quartic subfield of $\Q(\zeta_{\ell})$. 
\begin{enumerate}
\item[\emph{(1)}] The field $K_{(\ell)}$ is CM if and only if $\ell \equiv 5\bmod 8$.
\item[\emph{(2)}] We have $\Ram(K_{(\ell)}/\Q)=\{\ell\}$, and $\ell$ is totally ramified in $K_{(\ell)}$. 
\end{enumerate}
\end{lem}

\begin{proof}
(1) Recall that $\Q(\zeta_{\ell})$ is a CM field which is cyclic of degree $\ell-1$ over $\Q$. Since $K_{(\ell)}$ corresponds to the unique subgroup of $\Gal(\Q(\zeta_{\ell})/\Q)$ of order $(\ell-1)/4$, it is CM if and only if $(\ell-1)/4$ is an odd number. Finally, the condition $(\ell-1)/4\notin 2\Z$ is equivalent to the desired congruence $\ell\equiv 5\bmod 8$.

(2) This follows from the equality $\Ram(\Q(\zeta_{\ell})/\Q)=\{\ell\}$ and that $\ell$ is totally ramified in $\Q(\zeta_{\ell})$. \qed
\end{proof}

\begin{lem}\label{lm:prime-tri}
There are infinitely many ordered triples of prime numbers $(\ell_1, \ell_2,\ell_3)$ for which the following are satisfied: 
\begin{enumerate}
\item[\emph{(a)}] $\ell_1\equiv 5\bmod 8$;
\item[\emph{(b)}] $\ell_2 \equiv 1\bmod 4$ and $\left(\frac{\ell_2}{\ell_1}\right)=-1$;
\item[\emph{(c)}] $\ell_3\equiv 3\bmod 4$, $\left(\frac{\ell_3}{\ell_2}\right)=1$ and $\ell_3$ splits completely in $K_{(\ell_1)}$. 
\end{enumerate}
\end{lem}

\begin{proof}
This follows from Dirichlet's prime number theorem. \qed
\end{proof}

For a finite abelian extension $E/\Q$ and a prime number $\ell$, we write $D_{\ell}(E/\Q)$ and $I_{\ell}(E/\Q)$ for the decomposition group and the inertia group of $\Gal(E/\Q)$ at $\ell$ respectively. Observe that if $E'$ is a subextension of $E/\Q$ and let $\pi_{E'}:\Gal(E/\Q) \to \Gal(E'/\Q)$ denote the natural projection, then 
$\pi_{E'}(D_\ell(E/\Q))=D_{\ell}(E'/\Q)$ and $\pi_{E'}(I_\ell(E/\Q))=I_{\ell}(E'/\Q)$.

Theorem \ref{thm:prod-ld} is a consequence of Proposition \ref{prop:K1K2} and Lemma~\ref{prop:KKp}. 

\begin{prop}\label{prop:K1K2}
Let $(\ell_1,\ell_2,\ell_3)$ be an ordered triple of prime numbers as in Lemma \ref{lm:prime-tri}, and set
\begin{equation*}
K_1:=K_{(\ell_1)}(\sqrt{\ell_2}),\quad K_2:=\Q(\sqrt{-\ell_{1}\ell_{3}}). 
\end{equation*}
Then the fields $K_1,K_2$ are CM and they satisfy the conditions {\rm (i)} -- {\rm (iii)} in Proposition \ref{prop:neg1}. 
\end{prop}

\begin{proof}
By definition, $K_2$ is an imaginary quadratic field, and hence CM. Moreover, the condition (a) in Lemma \ref{lm:prime-tri} and Lemma \ref{lm:quartic} (1) imply that $K_1$ is also CM.

In the following, we shall prove that the statements (i), (ii) and (iii) in Proposition \ref{prop:neg1} hold.  Statement (i) follows directly from the definitions of $K_1$ and $K_2$. To prove statement (ii), it suffices to prove the equality $K_1\cap K_2=\Q$. However, this follows from the fact that $\ell_3$ is unramified in $K_1$ and is totally ramified in $K_2$. We now show (iii). It is clear that $L:=K_1K_2$ is unramified outside $2,\ell_1,\ell_2$ and $\ell_3$. Hence it suffices to compute $D_{\ell}(L/\Q)$ for $\ell\in \{2,\ell_1,\ell_2,\ell_3\}$. First, suppose $\ell=2$. Then Lemma \ref{lm:quartic} (2) implies that $2$ is unramified in $K_{(\ell_1)}$. Moreover, by (b) and (c), we have $\ell_1\equiv \ell_2\equiv -\ell_1\ell_3\equiv 1\bmod 4$. Hence $L/\Q$ is unramifed at $2$, which implies that $D_{2}(L/\Q)$ is cyclic. Second, assume $\ell=\ell_1$. By  assumptions (b) and (c), we have the following: 
\begin{gather*}
D_{\ell_1}(K_1/\Q)=G_1,\quad I_{\ell_1}(K_1/\Q)\cong \Z/4\Z \times \{\bar 0\},\\
D_{\ell_1}(K_2/\Q)=I_{\ell_1}(K_2/\Q)=G_2. 
\end{gather*}
Since $\ell_1\neq 2$, by local class field theory $I_{\ell_1}(L/\Q)$ is a finite quotient of $\Z_{\ell_2}^\times$ and is cyclic. Since $I_{\ell_1}(K_2/\Q)=G_2$, we have 
\begin{equation*}
I_{\ell_1}(L/\Q)\cong \langle (\bar 1,\bar 0, \bar 1)\rangle. 
\end{equation*}
The fixed subfield of $I_{\ell_1}(L/\Q)$ in $L$ is $L':=\Q(\sqrt{\ell_2},\sqrt{-\ell_3})$. Indeed, one has 
\[ L^{\<(\bar 2,\bar 0, \bar 0)\>}=\Q(\sqrt{\ell_1},\sqrt{\ell_2},\sqrt{-\ell_1 \ell_3})\quad  \text{ and  }\quad
\Q(\sqrt{\ell_1},\sqrt{\ell_2},\sqrt{-\ell_1 \ell_3})^{\<(\bar 1,\bar 0, \bar 1)\>}=\Q(\sqrt{\ell_2},\sqrt{-\ell_3}). \] 
By (c), we have $D_{\ell_1}(L'/\Q)=\Gal(L'/\Q(\sqrt{-\ell_3}))\simeq \Gal(\Q(\sqrt{\ell_2})/\Q)$, and hence $D_{\ell_1}(L/\Q)/I_{\ell}(L/\Q)$ is generated by the image of $(\bar 0,\bar 1 ,\bar 0 )$ in $G/I_{\ell}(L/\Q)$. Therefore we obtain $D_{\ell_1}(L/\Q)=D_{0}$. 
Third, if $\ell=\ell_2$, then by (b) we have 
\begin{gather*}
D_{\ell_2}(K_1/\Q)=G_1,\quad I_{\ell_2}(K_1/\Q)\cong \{0\}\times \Z/2\Z.
\end{gather*}
One computes $\left(\frac{-\ell_1 \ell_3}{\ell_2}\right)=-1$ and then 
\[ D_{\ell_2}(K_2/\Q)=G_2,\quad I_{\ell_2}(K_2/\Q)=\{0\}. \]
Since $I_{\ell_2}(K_2/\Q)=\{0\}$, one has $I_{\ell_2}(K_2/\Q)=\<(\bar 0,\bar 1, \bar 0)\>$. By $D_{\ell_2}(K_1/\Q)=G_1$, one sees $D_{\ell_2}(L/\Q)=\<(\bar 0,\bar 1, \bar 0), ((\bar 1,\bar 0, \bar c)\>$ for some $\bar c \in \Z/2\Z$. Because $D_{\ell_2}(K_2/\Q)=G_2$, we see $\bar c=\bar 1$ and hence $D_{\ell_2}(L/\Q)=D_0$.
Finally, suppose $i=3$. Since $\ell_3$ splits completely in $K_1$, then 
\begin{equation*}
D_{\ell_3}(L/\Q)=D_{\ell_3}(K_2/\Q)=G_2,
\end{equation*}
and hence it is cyclic. This completes the proof of (iii). \qed
\end{proof}

\begin{lemma}\label{prop:KKp}
Let $(\ell_1,\ell_2,\ell_3)$ and $(\ell'_1,\ell'_2,\ell'_3)$ be ordered triples of prime numbers satisfying (a), (b) and (c) in Lemma \ref{lm:prime-tri}. Put
\begin{equation*}
K:=K_{(\ell_1)}(\sqrt{\ell_2})\times \Q(\sqrt{-\ell_{1}\ell_{3}}),\quad 
K':=K_{(\ell'_1)}(\sqrt{\ell'_2})\times \Q(\sqrt{-\ell'_{1}\ell'_{3}}). 
\end{equation*}
Then we have $K\simeq K'$ if and only if $\ell_i=\ell'_i$ for every $1\leq i\leq 3$. 
\end{lemma}

\begin{proof}
It suffices to prove that $K\simeq K'$ implies $\ell_i=\ell'_i$ for any $1 \leq i\leq 3$. Assume $K\simeq K'$, which is equivalent to $K_{(\ell_1)}(\sqrt{\ell_2})\simeq K_{(\ell'_1)}(\sqrt{\ell'_2})$ and $\Q(\sqrt{-\ell_{1}\ell_{3}})\simeq \Q(\sqrt{-\ell'_{1}\ell'_{3}})$. Then, by Lemma~\ref{lm:quartic} (2), one has
\begin{equation*}
\{\ell_1,\ell_2\}=\Ram(K_{(\ell_1)}(\sqrt{\ell_2}))= \Ram(K_{(\ell'_1)}(\sqrt{\ell'_2}))=\{\ell'_1,\ell'_2\}. 
\end{equation*}
Moreover, the following hold: 
\begin{gather*}
|I_{\ell_1}(K_{(\ell_1)}(\sqrt{\ell_2})/\Q)|=|I_{\ell'_1}(K_{(\ell'_1)}(\sqrt{\ell'_2})/\Q)|=4,\\
|I_{\ell_2}(K_{(\ell_1)}(\sqrt{\ell_2})/\Q)|=|I_{\ell'_2}(K_{(\ell'_1)}(\sqrt{\ell_2})/\Q)|=2. 
\end{gather*}
Hence we must have $\ell_1=\ell'_1$ and $\ell_2=\ell'_2$. On the other hand, $\Q(\sqrt{-\ell_{1}\ell_{3}})\simeq \Q(\sqrt{-\ell'_{1}\ell'_{3}})$ implies $\ell_{1}\ell_{3}=\ell'_{1}\ell'_{3}$ since they are square-free integers. Combining this equality with $\ell_1=\ell'_1$, we obtain $\ell_3=\ell'_3$. This completes the proof. \qed
\end{proof}

\section*{Acknowledgments}
The present project grows up from the 2020 NCTS USRP ``Arithmetic of CM tori''
where the authors participated. 
The authors thank Jeff Achter, Ming-Lun Hsieh, Tetsushi Ito, Teruhisa Koshikawa,
Wen-Wei Li, Thomas R\"ud, Yasuhiro Terakado,
Jiangwei Xue, Seidai
Yasuda for helpful discussions and their valuable comments and especially to Thomas R\"ud who kindly answered the last author's questions and informed his results. They also thank a referee for a helpful comment which improves Proposition~\ref{prop:nKH2Z}.  
Liang, Yang and Yu were partially supported by the MoST grant 109-2115-M-001-002-MY3. Oki was supported by JSPS Research Fellowship for Young Scientists and KAKENHI Grant Number 22J00570.

\appendix

\section{Ono's Conjecture on Tamagawa numbers of algebraic tori \\ Jianing Li and Chia-Fu Yu}

\begin{thm}\label{thm:ono}
For any global field $k$ and any positive rational number $r$, there exists a $k$-torus $T$ such that $\tau(T)=r$.
\end{thm}
\begin{proof}
We follow the idea of Ono. It suffices to construct for each prime $\ell$ (a) a $k$-torus $T_1$ with $\tau(T_1)=\ell$ and (b) a $k$-torus $T_2$ with $\tau(T_2)=\ell^{-2}$. Take a cyclic extension $K/k$ of degree $\ell$ with Galois group $G$ and let $T_1=R_{K/k}^{(1)} \GmK$. Then $H^1(G, X(T_1))\simeq H^2(G,\Z)\simeq \Z/\ell \Z$ and $\Sha^2(K/k,X(T))=0$ by Chebotarev's density theorem. Thus, $\tau(T_1)=\ell$ and (a) is done. 
For (b), we take an abelian extension $K/k$ with Galois group $G\simeq (\Z/\ell\Z)^4$ such that  every decomposition group is cyclic. Take $T_2:=R_{K/k}^{(1)} \GmK$. Since every decomposition group of $G$ is cyclic, $\Sha^2(G,X(T_2))=H^2(G,X(T_2))\simeq H^3(G,\Z)$. By Lyndon's formula \cite[Theorem 6]{lyndon:duke1947}, $H^3(G,\Z)\simeq (\Z/\ell\Z)^6$. On the other hand, $H^1(G,X(T))\simeq H^2(G,\Z)\simeq (\Z/\ell\Z)^4$. Thus, $\tau(T_2)=\ell^{-2}$ and (b) is done. A construction of such an abelian extension $K/k$ when $k$ is a number field is given by Katayama \cite{katayama:1985}. For the function field case, a construction, which is more involved, is given in Theorem~\ref{thm:llll}. \qed  
\end{proof}

\begin{thm}\label{thm:llll}
Let $k$ be a global function field of \char $p>0$. For any prime $\ell$ and any positive integer $n$, there exists an abelian extension of $k$ with Galois group $G\simeq (\Z/\ell\Z)^n$ in which every decomposition group is cyclic. 
\end{thm}

We shall use the cyclotomic function field and we recall the basic facts following from class field theory. For an explicit construction of these fields by Drinfeld modules, we refer to \cite[Chapter 12]{rosen:GTM210}. Let $k=\Fp(t)$ be the function field of the projective line over $\Fp$
 and let $A=\Fp[t]$. In what follows $P, P_i$ always denote monic irreducible polynomials of $A$. Let $\infty$ denote the place of the infinite point and set $V_\infty=\langle t, 1+t^{-1} \Fp[[t^{-1}]] \rangle \subset k^\times_\infty=\Fp((t^{-1}))$. 
 For a monic polynomial $M=\prod_{i=1}^{r} P^{n_i}_i \in A$, let $K(M)$ be the finite abelian extension of $k$ corresponding to 
 the following open subgroup (with finite index) of $\A^\times_k$ 
 \begin{equation*}
U(M) = k^\times  \left (\prod_{i=1}^{r}(1+P^{n_i}_i O_{P_i}) \times V_\infty \times \prod_{v\nmid M\infty} O^\times_v\right).
\end{equation*}
(The field $K(M)$ is called the cyclotomic function field for $M$.) Thus, we have $\A^\times_k/U(M)\simeq \Gal(K(M)/k)$ via the Artin map. Clearly, $P$ is unramified in $K(M)$ if $P\nmid M$. The decomposition group of $\infty$ in $K(M)$ is isomorphic to $\bbF^\times_p$, which is cyclic. There is an isomorphism $(A/M)^\times \cong \A^\times_k/U(M)$ induced by $P\mod M\mapsto (1,\cdots, P,\cdots, 1) \mod U(M)$ ($P$ sitting on the place $P$) for $P\nmid M$, and $a\mod M \mapsto (1,\cdots, 1,\cdots, a) \mod U(M)$ for $a\in \bbF^\times_p \subset (A/M)^\times$ ($a$ sitting on the place $\infty$). So we have the following isomorphism which maps $P \mod M$ ($P\nmid M$) to its Frobenius element in $\Gal(K(M)/k)$
\begin{equation*}
(A/M)^\times \simeq \Gal(K(M)/k).
\end{equation*}

 We will primarily be concerned with the fields $K(P)$ and $K(P^2)$, in which $P$ is totally ramified.
 If $\ell$ divides $|(A/P)^\times|$, let $F(P)$ denote the subfield of $K(P)$ fixed by $((A/P)^\times)^\ell$ so that $\Gal(F(P)/k)\simeq \Z/\ell\Z$. If $a\in A/P$, 
then $1+aP \bmod P^2 \in (A/P^2)^\times$ is well defined and it generates the subgroup $H(a)=\{1+aiP\bmod P^2: i=0,1,\cdots,p-1\}$ of order $p$ when $a\neq 0$. We let $F(P,a)$ denote the subfield of $K(P^2)$ fixed by $\{x\in (A/P^2)^\times: x^{q-1}\in H(a)\}$ where $q=p^{\deg P}=|A/P|$. This group is isomorphic to $H(a) \times (A/P)^\times$; since $(A/P^2)^\times \simeq A/P\times (A/P)^\times$, we have $\Gal(F(P,a)/k) \simeq (\Z/p\Z)^{\deg P-1}$ when $a\neq 0$.

\begin{proof}
We first prove the theorem when $k=\Fp(t)$ and $\ell\neq p$. The argument of this case is entirely similar to the case of number fields given in \cite{katayama:1985}.
Choose $P_1$ such that it splits in $k(\mu_{\ell})$ where $\mu_{\ell}$ is the group of $\ell$-th roots of unity. We inductively choose $P_r$ ($1\leq r \leq n$) such that $P_r$ splits completely in the composite field $k(\mu_{\ell}, \sqrt[\ell]{P_1},\cdots, \sqrt[\ell]{P_{r-1}})F(P_1)\cdots F(P_{r-1})$. Clearly, each $P_i$ has infinitely many ways to choose by density theorems. Now let $K$ be the composite field $F(P_1)\cdots F(P_n)$. The decomposition group of $\infty$ in $K/k$ is cyclic, since $K$ is a subfield of the cyclotomic function field $K(P_1\cdots P_r)$. Moreover, if $P$ is not one of the $P_i$, $P$ is unramified and hence its decomposition group is cyclic. Therefore, to show that $K/k$ is the desired extension, it suffices to show that $P_i$ splits in $F(P_j)$ whenever $i\neq j$. Assume $i <j$. Then $P_j$ splits in $F(P_i)$ by the construction. Conversely, since $P_j$ splits in $k(\sqrt[\ell]{P_i})$, $P_i$ is an $\ell$-power in the completion $k_{P_j}$ which implies that $P_i \in ((A/P_j)^\times)^\ell$; hence $P_i$ splits in $F(P_j)$. This proves the theorem when $k=\Fp(t)$ and $\ell \neq p$.

Assume next $k=\Fp(t)$ and $\ell = p$. Take an arbitrary $P_1\in A$ with $a_1=0\in A/P_1$. Let $k_1$ be a subfield of $F(P_1,a_1)$ with $[k_1:k]=p$. We inductively choose $(P_r,a_r,k_r)$ $(1\leq r\leq n)$ with $a_r\in A/P_r$ such that $\deg P_r\geq r$ and
\begin{equation*}
P^{q_i-1}_r \equiv 1+a_iP_i  \mod P^2_i \quad  \text{ for } i=1,\cdots, r-1, \text{ where } q_i = |A/P_i|. 
\end{equation*}
Let $a_{ri}\in A/P_r$ ($i=1,\cdots, r-1$) such that 
\begin{equation*}
P^{q_r-1}_{i} \equiv 1+a_{ri}P_r \mod P^2_r \quad \text{ for } i=1,\cdots, r-1.
\end{equation*}
Let $a_r:=a_{r1}$, and let $k_r$ be a subfield of $\cap{_{i=1}^{r-1}} F(P_r, a_{ri})$ with $[k_r:k]=p$. Put $K:=k_1\cdots k_r$ and let us show that $K/k$ is the desired extension. We have $\Gal(K/k)\simeq (\Z/p\Z)^n$ by considering the ramification. Assume $i< j$. Then $P_j$ splits completely in $F(P_i, a_i)$ by the first congruences and hence splits in $k_i$; conversely, $P_i$ splits completely in $F(P_j, a_{ji})$ by the second congruences and hence also splits in its subfield $k_j$. It follows that the decomposition subgroup of $P_i$ ($i=1,\cdots, n$) in $K/k$ is cyclic. This property also holds for the place $\infty$, since $K\subset K(P^2_1\cdots P^2_n)$. This proves the theorem when $k=\Fp(t)$ and $\ell = p$. 

Now assume that $k$ is any finite extension of $\Fp(t)$ and $\ell$ is any prime. Choose $P_1, \cdots, P_n$ as above but we additionally require that $P_i$ is unramified in $k/\Fp(t)$, and we define $K/\Fp(t)$ as above. Then $\Gal(Kk/k) \simeq (\Z/\ell\Z)^n$. Since each decomposition subgroup for $Kk/k$ is a subgroup of that of $K/\Fp(t)$, the field $Kk$ is the desired extension. This completes the proof of Theorem~\ref{thm:llll}. \qed
\end{proof}



\bibliographystyle{plain}
\bibliography{TeXBiB}

\end{document}